\newcommand{\Th}{\operatorname{Th}}
\newcommand{\Z}{\mathbb{Z}}
\newcommand{\G}{\Gamma}
\newcommand{\var}{\varphi}
\renewcommand{\a}{\alpha}
\newcommand{\lk}{\operatorname{lk}}
\newcommand{\cdim}{\operatorname{cd}}
\newcommand{\cd}{\operatorname{cd}}
\newcommand{\vcd}{\operatorname{vcd}}
\def\S{\mathcal{S}}
\def\Z{\mathbb{Z}}
\theoremstyle{plain}
\newtheorem*{mainthm}{Main Theorem}
\newtheorem{thm}{Theorem}[section]
\newtheorem{cor}[thm]{Corollary}
\newtheorem{lem}[thm]{Lemma}
\newtheorem{prop}[thm]{Proposition}
\theoremstyle{definition}
\newtheorem{defn}[thm]{Definition}
\newtheorem{rmk}[thm]{Remark}
\definecolor{cbred}{HTML}{A40725}
\definecolor{cbblue}{HTML}{0057B2}
\definecolor{cbgreen}{HTML}{08AD15}
\definecolor{cbviolet}{HTML}{6C016D}
\title{High dimensional hyperbolic Coxeter groups that virtually fiber}
\author{
    Jean-Fran\c{c}ois Lafont\\
    \texttt{jlafont@math.ohio-state.edu} \\
\and 
    Barry Minemyer \\
    \texttt{bminemyer@commonwealthu.edu} \\
\and 
    Gangotryi Sorcar \\
    \texttt{gangotryi.sorcar@krea.edu.in}
\and 
    Matthew Stover \\
    \texttt{mstover@temple.edu}
\and 
    Joseph Wells \\
    \texttt{jwellsmath@protonmail.com}
    }
\date{\today}
\begin{document}

\maketitle

\begin{abstract}
    This paper provides an iterative procedure for constructing hyperbolic Coxeter groups that virtually fiber over $\Z$ that is flexible enough to yield infinitely many isomorphism classes in each virtual cohomological dimension (vcd) $n\geq 2$. Our procedure combines results of Jankiewicz, Norin, and Wise with a generalization of a construction due to Osajda involving a new simplicial thickening process. We also give a topological argument showing that the vcd of the right-angled Coxeter groups produced by our construction increases by exactly one with each iteration, guaranteeing that our process produces examples of every vcd.
\end{abstract}

\section{Introduction}

A group is said to \emph{virtually algebraically fiber}, or just to \emph{virtually fiber}, if it has a finite index subgroup that surjects onto $\Z$ with finitely generated kernel. In a seminal paper, Bestvina and Brady developed a combinatorial Morse Theory approach to determining whether certain groups virtually algebraically fiber \cite{BB}. More recently, Jankiewicz, Norin, and Wise gave a combinatorial criterion, based only on the existence of a certain state on the defining graph of a right-angled Coxeter group that forms a \emph{legal orbit} under a certain set of moves, to certify that Bestvina--Brady Morse Theory applies to provide a virtual fibration \cite{JNW}. This result motivated a large number of recent results in geometric group theory and hyperbolic geometry, including examples of higher-dimensional hyperbolic manifolds whose fundamental groups virtually fiber \cite{BM,IMM1,F} and the first hyperbolic 5-manifold known to topologically fiber over the circle \cite{IMM2}.

The purpose of this paper is to construct new examples of Gromov hyperbolic right-angled Coxeter groups that virtually fiber. Other than examples found in the above references, the only other infinite family of examples we know are due to Schesler and Zaremsky \cite[Thm.\ 10.2]{ScheslerZaremsky}. As Schesler and Zaremsky remark after their proof, their examples all have cohomological dimension two. Our contribution is to construct infinitely many isomorphism classes in every cohomological dimension.

\begin{mainthm}\label{thm:main}
    For every $n \geq 2$, there exist infinitely many isomorphism classes of hyperbolic right-angled Coxeter groups that virtually algebraically fiber and have virtual cohomological dimension $n$.
\end{mainthm}

We now describe the construction. In \cite{osajda}, Osajda developed a recursive construction that builds hyperbolic right-angled Coxeter groups with arbitrarily large virtual cohomological dimension. In general, these Coxeter groups may not satisfy the hypotheses of \cite[Thm.\ 6.14]{JNW}, and therefore the combinatorial systems of moves approach developed in that paper may not apply to provide a virtual algebraic fibration. We combine the recursive construction in \cite{osajda} with a novel ``$\alpha$-thickening'' procedure generalizing the thickening procedure used by Osajda to construct hyperbolic right-angled Coxeter groups whose defining graphs admit a state with a legal orbit under an appropriate system of moves, so \cite{JNW} then applies to provide a virtual fibration.

Given a finite cube complex $X$, our $\alpha$-thickening procedure enlarges the vertex set  by a surjection $\alpha \colon Y \to V(X)$ from a large finite set $Y$, inserts a high-dimensional simplex above each vertex of the original cube complex, and then applies the standard thickening procedure of \cite{osajda}. This has the effect of inserting enough vertices and edges into the 1-skeleton of the complex so that one can develop a legal system of moves without changing the homotopy type of the resulting simplicial complex. This process may be of independent interest and is developed in \Cref{sec:alpha-thickening} below.

We now describe the general strategy for producing our virtually fibered hyperbolic right-angled Coxeter groups in more detail. We work with cube complexes that are $5$-large (see \Cref{sec:alpha-thickening}) and satisfy the conditions of having \emph{no isolated corners} and \emph{no disconnecting cubes}; see \Cref{sec:thickening} for definitions.

\begin{enumerate}
    \item Start with a finite $5$-large cube complex $X_n$ that has no isolated corners, no disconnecting cubes, and has (integral) cohomological dimension $n$. 

    \item Apply a carefully chosen $\alpha$-thickening procedure to $X_n$ to produce the simplicial complex $T_n\coloneqq\Th_\alpha(X_n)$. Since $\Th_\alpha(X_n)$ is homotopy equivalent to $X_n$, the simplicial complex $T_n$ also has cohomological dimension $n$.

    \item Using properties of the $\alpha$-thickening, we verify that the $1$-skeleton of $T_n$ is $5$-large and admits a legal system of moves, and therefore the associated right-angled Coxeter group $G_{n+1}$ is Gromov hyperbolic and virtually fibers. Generalizing and refining the Main Theorem in \cite{osajda}, we show that $G_{n+1}$ has cohomological dimension $n+1$.
    Let $D_{n+1}$ denote the Davis complex for $G_{n+1}$.

    \item Take a quotient of $D_{n+1}$ by a torsion-free finite index subgroup of $G_{n+1}$ of sufficiently high index, and call the resulting quotient cube complex $X_{n+1}$. We verify that $X_{n+1}$ is $5$-large, has no disconnecting cubes and no isolated corners, and has cohomological dimension $n+1$.    

    \item Return to Step 1, replacing $X_n$ with $X_{n+1}$.
\end{enumerate}

\begin{rmk}
    For each $k \geq 5$, one can start with a $k$-cycle for $X_1$. Since $\alpha$-thickening only increases the number of vertices, the resulting right-angled Coxeter group with vcd $n$ has abelianization of order at least $2^k$. Letting $k$ go to infinity, one obtains examples in each vcd with abelianization of arbitrarily large order. In particular, one obtains infinitely many isomorphism classes with each fixed vcd.
\end{rmk}

\begin{rmk}
    The reader might naturally wonder whether probabilistic methods could be used to easily produce random graphs whose associated right-angled Coxeter groups satisfy the conclusions of our main theorem. The classical edge independent model $G(n,p)$ of random graphs considers graphs on $n$ vertices, where each edge has probability $p$ of being included. Davis and Kahle \cite{DavisKahle} showed that within the parameter range $n^{-1/k}\ll p \ll n^{-1/(k+1)}$ the right-angled Coxeter group associated to a randomly generated graph will almost surely have cohomological dimension $k$ as $n$ goes to infinity. However, within the parameter range $1/n \ll p \ll 1-\frac{1}{n^2}$ the random graph will almost surely have associated right-angled Coxeter group that is not Gromov hyperbolic (see \cite[Cor.\ 2.2]{CharneyFarber}. Thus within the random model $G(n,p)$, the random graphs that will almost surely produce right-angled Coxeter groups of cohomological dimension at least $2$ will almost surely correspond to right-angled Coxeter groups that are \textbf{not} Gromov hyperbolic. 
\end{rmk}

\subsection*{Acknowledgements}  This work was completed as part of the `Collaborative Projects in Geometric Topology' program organized by the Geometry and Topology group at the University of Virginia and supported by an RTG grant from the National Science Foundation (DMS-1839968). We thank the main organizers Sara Maloni and Thomas Mark for providing us the opportunity to work together on this project. We also thank Mike Davis, Jingyin Huang, and Boris Okun for some helpful comments and references. Lafont was partially supported by the NSF under grant DMS-2407438. Stover was partially supported by NSF grants DMS-2203555 and DMS-2506896, along with award SFI-MPS-TSM-00014184 from the Simons Foundation. The authors thank the referee for a very careful reading of the paper.

\section{Systems of Moves and Thickenings of Cube Complexes}

In what follows we use some of the general theory of CAT(0) cube complexes. See \cite{Davis, Sageev} for standard references.

\subsection{Jankiewicz--Norin--Wise States and Moves}\label{sec:JNW}

The proof that our right-angled Coxeter groups virtually fiber relies on the \emph{systems of moves} and \emph{legal states} defined and developed by Jankiewicz, Norin, and Wise in \cite{JNW}. Here we quickly discuss their terminology and the main result that we will use in this paper. In what follows $\Z_2$ will denote the cyclic group of order two.

Let $\G = \G(V)$ be a finite simplicial graph with vertex set $V$. A \emph{state} of $\G$ is a function ${\var \colon V \to \Z_2}$, which amounts to an assignment of 0 or 1 to every vertex in the graph. A state is said to be \emph{legal} if the subgraphs of $V$ induced by $\var^{-1}(0)$ and $\var^{-1}(1)$ are both connected and nonempty. A \emph{move} at $v \in V$ is a state $m_v$ with the following properties:
    \begin{enumerate}
        \item $m_v (v) = 1$
        \item  $m_v(u) = 0$ if $u$ and $v$ are adjacent in $\G$.
    \end{enumerate}
A \emph{system of moves} is a choice $m_v$ for each $v \in V$. A move is typically not a legal state, except in the case where $m_v^{-1}(1) = \{v\}$ and $v$ is not a cut point. Notice that it is allowable for two nonadjacent vertices to be assigned the same move, meaning that $m_v = m_w$ as functions from $V$ to $\Z_2$.

There is a natural bijection between the collection of states and the group $\Z_2^V$. This gives the collection of states the structure of an abelian group and, in particular, the subgroup $M \le \Z_2^V$ generated by a system of moves is well-defined. A system of moves is called \emph{legal} if there exists a legal state $S$ such that every state in the set
    \[ M \cdot S = \{ m + S\ :\ m \in M \} \]
is a legal state, and such an orbit $M \cdot S$ is called a \emph{legal orbit}. Using this setup, Jankiewicz, Norin, and Wise applied Bestvina--Brady Morse Theory \cite{BB} to prove the following result.

\begin{thm}[Theorem 4.3 and Corollary 4.4 in \cite{JNW}]\label{thm:JNW}
    Suppose that $\G$ is a finite graph on which there exists a legal system of moves. Then the right-angled Coxeter group $G$ with defining graph $\G$ virtually algebraically fibers.  
\end{thm}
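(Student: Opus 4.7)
The plan is to apply Bestvina--Brady discrete Morse theory to the Davis complex $D$ of $G$, building a Morse function from the system of moves and using the legal orbit to verify the connectivity of ascending and descending links. States on $\Gamma$ are precisely the homomorphisms $G \to \Z_2$ via $G^{ab} \cong \Z_2^V$, so $M$ is a subgroup of $\mathrm{Hom}(G, \Z_2)$. First pass to the torsion-free finite-index subgroup $K = \ker(G \to \Z_2^V)$, which acts freely and cocompactly on $D$.

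Next build the Morse function. Choose a basis of $M$ and lift each basis move, viewed as a $\Z_2$-cocycle on $D$, to an integer cocycle; this is possible precisely because the adjacency condition $m_v(u)=0$ for $u$ adjacent to $v$ in $\Gamma$ makes the associated signed assignment on edges of $D$ consistent across every commuting square. A generic positive $\mathbb{R}$-linear combination of these lifts produces a $K$-equivariant affine function $\tilde\phi: D \to \mathbb{R}$, linear on each cube and with no horizontal edges, whose underlying character on $K$ is a character $\phi: K \to \Z$. At any vertex $v \in D^{(0)}$, the link is the flag complex of $\Gamma$, and $\tilde\phi$ labels each incident edge ``up'' or ``down'', recording a state $\varphi_v \in \Z_2^V$. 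Since links in $D$ are flag, the ascending (resp.\ descending) link at $v$ is the flag complex on the induced subgraph $\Gamma[\varphi_v^{-1}(1)]$ (resp.\ $\Gamma[\varphi_v^{-1}(0)]$), and its connectivity coincides with connectivity of that induced subgraph. Translating $v$ by $s_w$ shifts $\varphi_v$ by exactly the move $m_w$, which is what the two axioms defining a move are engineered to guarantee: $m_w(w)=1$ records the flip along a $w$-edge, and $m_w(u)=0$ for $u$ adjacent to $w$ records that parallel $u$-edges of a commuting square inherit the same orientation. Consequently, $\{\varphi_v\}_{v \in D^{(0)}}$ realizes the orbit $M \cdot S$ with $S = \varphi_{v_0}$ at the basepoint; the integer lifts can be chosen so that $S$ is the prescribed legal state.

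Applying Bestvina--Brady's criterion from \cite{BB}, $\ker \phi \leq K$ is finitely generated provided every ascending and descending link at every vertex of $D$ is connected and nonempty; legality of every state in $M \cdot S$ supplies exactly this. Hence $K$, and therefore $G$, virtually algebraically fibers. The main obstacle is matching the vertex-indexed family of states $\{\varphi_v\}$ with the orbit $M \cdot S$: this is where the adjacency condition in the definition of a move is indispensable, ensuring that each $\Z_2$-cocycle represented by a move admits a $\Z$-lift compatible with the cube structure of $D$, and is the technical heart of the Jankiewicz--Norin--Wise argument.
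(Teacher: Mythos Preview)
The paper does not supply a proof of this statement: it is quoted from \cite{JNW} and invoked only as a black box in Step~1 of the main inductive argument. There is therefore no proof in the present paper against which to compare your proposal.

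For completeness, your sketch is a faithful high-level outline of the Jankiewicz--Norin--Wise argument itself: moves become $\Z_2$-cocycles on the Davis complex, the adjacency axiom $m_v(u)=0$ for $u$ adjacent to $v$ is precisely what allows coherent integer lifts across commuting squares, a generic combination yields a Bestvina--Brady Morse function, and the ascending and descending links at each vertex are the flag subcomplexes on $\varphi_v^{-1}(1)$ and $\varphi_v^{-1}(0)$, whose connectivity is guaranteed by legality of the orbit $M\cdot S$. The one step that deserves more care than you give it is verifying that the family of vertex states $\{\varphi_v\}$ coincides exactly with the orbit $M\cdot S$ for the prescribed legal $S$; in \cite{JNW} this bookkeeping is handled via an explicit diagonal map rather than by lifting an abstract basis of $M$.
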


\subsection{Simplicial Thickenings of Cube Complexes}\label{sec:alpha-thickening}

Let $X$ denote a cube complex. Osajda \cite{osajda} defines the \emph{thickening} $\Th(X)$ of $X$ to be the simplicial complex whose vertices are the same as the vertices of $X$, and vertices $v_1, \hdots, v_k$ of $\Th(X)$ span a simplex if and only if the vertices $v_1 , \hdots, v_k$ are contained in a common cube of $X$. Osajda's thickening is a special case of our more general $\alpha$-thickening discussed below. To distinguish between the ideas, we refer to Osajda's thickening using the notation $\Th_1(X)$.

We recall some terminology related to simplicial complexes. A subcomplex $\Sigma\prime \subset \Sigma$ is \emph{full} if every simplex of $\Sigma$ whose vertices lie in $\Sigma\prime$ is actually contained in $\Sigma\prime$. A simplicial complex $\Sigma$ is \emph{flag} if every $(k+1)$-tuple of pairwise adjacent vertices spans a $k$-simplex. Flag simplicial complexes are completely determined by their $1$-skeleton. For $k\geq 5$, we say a simplicial complex $\Sigma$ is \emph{$k$-large} if it is flag, and any embedded cycle of length $<k$ fails to be full. Note that, for $k=5$, this implies that for any embedded $4$-cycle $(v_1, v_2, v_3, v_4)$, one of the diagonals must be present, i.e. either $v_1$ is joined to $v_3$ by an edge, or $v_2$ is joined to $v_4$. The property of being $5$-large is sometimes also called the ``flag-no-square'' condition. 

A simplicial complex or cube complex is called \emph{locally $k$-large} if the link of every vertex is a $k$-large simplicial complex. If $X$ is a locally $5$-large cubical complex, then $\Th_1(X)$ is also a locally $k$-large simplicial complex \cite[Lem.\ 3.2]{osajda}. Since we will often have to keep track of whether or not vertices lie in a common cube, we will make use of the following:

\begin{defn}
    Two distinct vertices $v$ and $w$ in a cube complex $X$ have \emph{cubical distance} 1 if and only if they are contained in a common cube. More generally, the \emph{cubical distance} $d_{\text{cube}}(v,w)$ between distinct vertices $v$ and $w$ is the minimum number $n$ such that there is a sequence of vertices ${v=v_0,\ldots,v_n=w}$ where $d_{\text{cube}}(v_i,v_{i+1})=1$ for each $i$.
\end{defn}


Given a vertex $v$ in a cube complex $X$, the cubical $k$-neighborhood of $v$ is the subcomplex spanned by all vertices at cubical distance $\leq k$ from $v$. It will be convenient for our purposes to introduce the following terminology:

\begin{defn}
    Let $X$ be a cube complex. We say $X$ is \emph{$5$-large} if it is locally $5$-large, and has the property that the cubical $2$-neighborhood of every vertex is contractible. 
\end{defn}

Note that any locally $5$-large cube complex is locally CAT($0$) by Gromov's Lemma (see \cite[App.\ I]{Davis}). It follows that a simply connected locally $5$-large cube complex is CAT($0$). In particular, if $X$ is a $5$-large cube complex, then the universal cover $\tilde X$ is a CAT($0$) cube complex. The hypothesis on cubical $2$-neighborhoods allows us to identify such neighborhoods in $X$ with corresponding neighborhoods in the universal cover $\tilde X$.

In order to construct a legal system of moves as defined in \Cref{sec:JNW}, we will need a larger thickening than the one introduced by Osajda. The definition is as follows.  

\begin{defn}
    Let $X$ be a finite cube complex with vertex set $V(X)$. Given a finite set $Y$ equipped with a surjective map $\alpha \colon Y \to V(X)$, the \emph{$\alpha$-thickening} of $X$ is the unique simplicial complex $\Th_\alpha(X)$ with vertex set $Y$ so that $y_0, \dots, y_k \in Y$ span a $k$-simplex of $\Th_\alpha(X)$ if and only if the images ${\alpha(y_0), \dots, \alpha(y_k)}$ are contained in a common cube of $X$.
\end{defn}

\begin{figure}
\centering
\begin{tikzpicture}
  \def\RAD{1.5};
  \def\rad{0.5};
  \begin{scope}[shift={(-3.25,0)}]
    \foreach \i in {1,2,3,4,5}{
      \draw[thick,black] ({72*(\i-1)}:\RAD) -- ({72*\i)}:\RAD);
    }
    \foreach \i in {1,2,3,4,5}{
      \draw[thick,cbred, fill=cbred] ({72*(\i-1)}:\RAD) circle[radius=2pt];
    }
%
  \end{scope}
  \draw[ultra thick, dashed,-latex] (0.75,0) to node[anchor=south] {{\footnotesize $\alpha$ map}} (-0.75,0);
  \begin{scope}[shift={(3.25,0)}]
    \coordinate (a1) at ({\RAD*cos(0) + \rad*cos(0)},{\RAD*sin(0) + \rad*sin(0)});
    \coordinate (a2) at ({\RAD*cos(0) + \rad*cos(90)},{\RAD*sin(0) + \rad*sin(90)});
    \coordinate (a3) at ({\RAD*cos(0) + \rad*cos(180)},{\RAD*sin(0) + \rad*sin(180)});
    \coordinate (a4) at ({\RAD*cos(0) + \rad*cos(270)},{\RAD*sin(0) + \rad*sin(270)});
    \coordinate (b1) at ({\RAD*cos(72) + \rad*cos(0)},{\RAD*sin(72) + \rad*sin(0)});
    \coordinate (b2) at ({\RAD*cos(72) + \rad*cos(120)},{\RAD*sin(72) + \rad*sin(120)});
    \coordinate (b3) at ({\RAD*cos(72) + \rad*cos(240)},{\RAD*sin(72) + \rad*sin(240)});
    \coordinate (c1) at ({\RAD*cos(144) + \rad*cos(0)},{\RAD*sin(144) + \rad*sin(0)});
    \coordinate (c2) at ({\RAD*cos(144) + \rad*cos(60)},{\RAD*sin(144) + \rad*sin(60)});
    \coordinate (c3) at ({\RAD*cos(144) + \rad*cos(120)},{\RAD*sin(144) + \rad*sin(120)});
    \coordinate (c4) at ({\RAD*cos(144) + \rad*cos(180)},{\RAD*sin(144) + \rad*sin(180)});
    \coordinate (c5) at ({\RAD*cos(144) + \rad*cos(240)},{\RAD*sin(144) + \rad*sin(240)});
    \coordinate (c6) at ({\RAD*cos(144) + \rad*cos(300)},{\RAD*sin(144) + \rad*sin(300)});
    \coordinate (x41) at ({\RAD*cos(216) + \rad*cos(216+0)},{\RAD*sin(216) + \rad*sin(216+0)});
    \coordinate (y41) at ({\RAD*cos(216) + \rad*cos(216+90)},{\RAD*sin(216) + \rad*sin(216+90)});
    \coordinate (x42) at ({\RAD*cos(216) + \rad*cos(216+180)},{\RAD*sin(216) + \rad*sin(216+180)});
    \coordinate (y42) at ({\RAD*cos(216) + \rad*cos(216+270)},{\RAD*sin(216) + \rad*sin(216+270)});
    \coordinate (x52) at ({\RAD*cos(288) + \rad*cos(288+0)},{\RAD*sin(288) + \rad*sin(288+0)});
    \coordinate (y52) at ({\RAD*cos(288) + \rad*cos(288+90)},{\RAD*sin(288) + \rad*sin(288+90)});
    \coordinate (x53) at ({\RAD*cos(288) + \rad*cos(288+180)},{\RAD*sin(288) + \rad*sin(288+180)});
    \coordinate (y53) at ({\RAD*cos(288) + \rad*cos(288+270)},{\RAD*sin(288) + \rad*sin(288+270)});
    \foreach \A in {a1,a2,a3,a4,b1,b2,b3}{
      \foreach \B in {a1,a2,a3,a4,b1,b2,b3}{
        \draw (\A) -- (\B);
      }
    }
    \foreach \A in {b1,b2,b3,c1,c2,c3,c4,c5,c6}{
      \foreach \B in {b1,b2,b3,c1,c2,c3,c4,c5,c6}{
        \draw (\A) -- (\B);
      }
    }
    \foreach \A in {c1,c2,c3,c4,c5,c6,x41,y41,x42,y42}{
      \foreach \B in {c1,c2,c3,c4,c5,c6,x41,y41,x42,y42}{
        \draw (\A) -- (\B);
      }
    }
    \foreach \A in {x41,y41,x42,y42,x52,y52,x53,y53}{
      \foreach \B in {x41,y41,x42,y42,x52,y52,x53,y53}{
        \draw (\A) -- (\B);
      }
    }
    \foreach \A in {x52,y52,x53,y53,a1,a2,a3,a4}{
      \foreach \B in {x52,y52,x53,y53,a1,a2,a3,a4}{
        \draw (\A) -- (\B);
      }
    }
    \foreach \ang in {-144, -72, 0, 72, 144}{
      \draw[white,opacity=0.75,fill=white,fill opacity=0.75] (\ang:\RAD) circle[radius=1.5*\rad];
    }
    \foreach \A in {a1,a2,a3,a4}{
      \foreach \B in {a1,a2,a3,a4}{
        \draw[thick,cbred] (\A) -- (\B);
      }
    }
    \foreach \A in {b1,b2,b3}{
      \foreach \B in {b1,b2,b3}{
        \draw[thick,cbred] (\A) -- (\B);
      }
    }
    \foreach \A in {c1,c2,c3,c4,c5,c6}{
      \foreach \B in {c1,c2,c3,c4,c5,c6}{
        \draw[thick,cbred] (\A) -- (\B);
      }
    }
    \foreach \A in {x41,y41,x42,y42}{
      \foreach \B in {x41,y41,x42,y42}{
        \draw[thick,cbred] (\A) -- (\B);
      }
    }
    \foreach \A in {x52,y52,x53,y53}{
      \foreach \B in {x52,y52,x53,y53}{
        \draw[thick,cbred] (\A) -- (\B);
      }
    }
    \foreach \A in {a1,a2,a3,a4}{
      \draw[thick,cbred, fill=cbred] (\A) circle[radius=2pt];
    }
    \foreach \A in {b1,b2,b3}{
      \draw[thick,cbred, fill=cbred] (\A) circle[radius=2pt];
    }
    \foreach \A in {c1,c2,c3,c4,c5,c6}{
      \draw[thick,cbred, fill=cbred] (\A) circle[radius=2pt];
    }
    \foreach \A in {x41,y41,x42,y42}{
      \draw[thick,cbred, fill=cbred] (\A) circle[radius=2pt];
    }
    \foreach \A in {x52,y52,x53,y53}{
      \draw[thick,cbred, fill=cbred] (\A) circle[radius=2pt];
    }
  \end{scope}
\end{tikzpicture}
  \captionsetup{margin=0.75in}
  \captionof{figure}{A cube complex $X$ on the left and a generic $\alpha$-thickening $\Th_{\alpha}(X)$ on the right. The $\alpha$-thickenings can become extremely combinatorially complicated.}
  \label{fig:generic-thickening}
\end{figure}
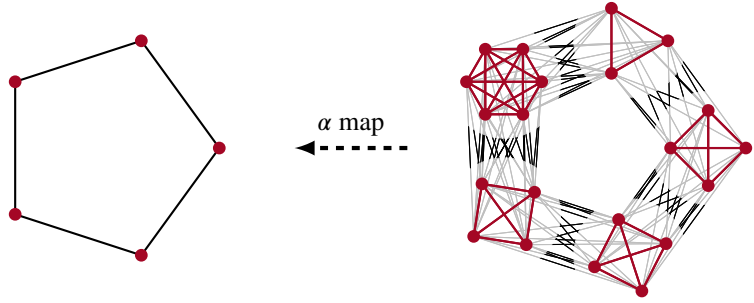

Note that the vertex set $Y$ is implicit in our notation $\Th_\alpha(X)$ for the $\alpha$-thickening. Two special cases of the thickening are noteworthy:

\begin{itemize}
    \item If $X = \{x\}$ and $Y = \{y_0, \ldots, y_n\}$, then $\alpha \colon Y \to X$ is the constant map and $\Th_{\alpha}(X)$ is a single $n$-simplex.
    \item On any cube complex $X$, we can recover the thickening $\Th_1(X)$ defined by Osajda \cite{osajda}, as it coincides with the $\alpha$-thickening associated with the identity map ${\alpha = \mathrm{Id} \colon V(X) \to V(X)}$.
\end{itemize}

We also note that there is an embedding of the Osajda thickening $\Th_1(X) \hookrightarrow \Th_\alpha(X)$ into the $\alpha$-thickening associated with any section of the map $\alpha \colon Y\rightarrow V(X)$. Moreover, $\Th_\alpha(X)$ deformation retracts to the image of $\Th_1(X)$, and hence we have a homotopy equivalence $\Th_\alpha(X)\simeq \Th_1(X)$. Combining this with the homotopy equivalence $\Th_1(X)\simeq X$ (see \cite[Lem.\ 3.5]{osajda}) we see that $\Th_\alpha(X)\simeq X$. Next we show that the thickening inherits $5$-largeness.

\begin{lem}\label{lem:flag 5-large}
    Suppose $X$ is a $5$-large cubical complex. Then for any surjective $\alpha \colon Y \to V(X)$, the simplicial complex $\Th_\alpha(X)$ is 5-large.
\end{lem}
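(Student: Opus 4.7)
The plan is to verify the two conditions defining $5$-largeness for $\Th_\alpha(X)$ — flagness and the flag-no-square condition — separately. The key technique throughout is that any configuration of vertices in $X$ of cubical diameter at most $2$ lies inside a contractible cubical $2$-neighborhood, which lifts isomorphically to the $\CAT(0)$ universal cover $\tilde X$. This reduces each condition to a combinatorial statement in the $\CAT(0)$ cube complex $\tilde X$.

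For flagness, I would take $y_0, \dots, y_k \in Y$ that are pairwise adjacent in $\Th_\alpha(X)$, so every pair $\alpha(y_i), \alpha(y_j)$ shares a cube in $X$. In particular every $\alpha(y_i)$ lies at cubical distance at most $1$ from $\alpha(y_0)$, hence inside its cubical $2$-neighborhood. Lifting this neighborhood isomorphically to $\tilde X$, I would invoke the standard $\CAT(0)$ fact that a pairwise-in-a-common-cube family of vertices in fact spans a common cube (a direct consequence of flagness of links). Projecting that cube back down to $X$ yields a cube containing every $\alpha(y_i)$, so the $y_i$ span a simplex in $\Th_\alpha(X)$.

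For the flag-no-square condition, I would take an embedded $4$-cycle $(y_1, y_2, y_3, y_4)$ in $\Th_\alpha(X)$ and case-split on coincidences among the images $\alpha(y_i)$. If $\alpha(y_1) = \alpha(y_3)$ (or symmetrically $\alpha(y_2) = \alpha(y_4)$), then the $0$-cube $\{\alpha(y_1)\}$ contains both images and $y_1 y_3$ is the required diagonal. If instead $\alpha(y_i) = \alpha(y_j)$ for a cyclically adjacent pair, say $\alpha(y_1) = \alpha(y_2)$, then any cube witnessing the edge $y_2 y_3$ of the cycle also contains $\alpha(y_1)$, so $\alpha(y_1)$ and $\alpha(y_3)$ share a cube and $y_1 y_3$ is a diagonal; the other cyclically-adjacent subcases are identical.

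The main obstacle is the remaining case where all four images are distinct. Then $(\alpha(y_1), \dots, \alpha(y_4))$ is itself an embedded $4$-cycle in the Osajda thickening $\Th_1(X)$, and any diagonal in $\Th_1(X)$ immediately lifts to a diagonal in $\Th_\alpha(X)$, so it suffices to rule out an embedded full $4$-cycle in $\Th_1(X)$. Since the four images lie in the cubical $2$-neighborhood of $\alpha(y_1)$, I would lift to $\tilde X$ and reduce to the statement that $\Th_1(\tilde X)$ is $5$-large for the locally $5$-large $\CAT(0)$ cube complex $\tilde X$. This is the $\CAT(0)$ heart of the argument, building on \cite[Lem.\ 3.2]{osajda}: the cubes witnessing the consecutive edges determine pairwise-crossing families of hyperplanes which, together with flagness of the link at a shared vertex of two adjacent cubes, force one of the diagonal pairs to lie in a common cube and thereby supply the needed diagonal.
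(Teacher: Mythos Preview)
Your proposal is correct and follows essentially the same approach as the paper: lift to the universal cover $\tilde X$ via the contractible cubical $2$-neighborhood hypothesis, invoke Osajda's result that $\Th_1(\tilde X)$ is $5$-large, and descend. The paper compresses the passage from $\Th_\alpha$ to $\Th_1$ into the phrase ``use the embeddings $\Th_1(X)\hookrightarrow\Th_\alpha(X)$ \ldots\ details left to the reader,'' whereas your case analysis on coincidences among the $\alpha(y_i)$ is exactly the content of those omitted details.
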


\begin{proof}
Consider the universal cover $\tilde X$ of $X$. Since $\tilde X$ is a locally $5$-large, simply connected cube complex, we have that $\Th_1(\tilde X)$ is $5$-large; see \cite[Prop.\ 3.4]{osajda}. The covering map $\tilde X \rightarrow X$ naturally induces a covering map $\Th_1(\tilde X) \rightarrow \Th_1(X)$. Our hypothesis guarantees that cubical $2$-neighborhoods in $X$ lift to cubical $2$-neighborhoods in $\tilde X$. It follows that the combinatorial $2$-neighborhoods in $\Th_1(X)$ lift isomorphically to combinatorial $2$-neighborhoods in $\Th_1(\tilde X)$. However, the properties of being flag and $5$-large are local: they only involve cycles of length at most $4$. Any such cycle is contained in a combinatorial $2$-neighborhood, so $\Th_1(X)$ is also $5$-large. For the general case where $\Th_\alpha(X)$ is an arbitrary thickening, it is straightforward to use the embeddings $\Th_1(X) \hookrightarrow \Th_\alpha(X)$ to reduce to the $\Th_1(X)$ case. We leave the details to the reader.
\end{proof}

\section{Thickenings that produce legal systems of moves}\label{sec:thickening}

This section contains the construction used to prove the Main Theorem. Throughout this section, $X$ will denote a connected finite cube complex.

\subsection[3.1]{Thickening $\mathbb{T}$, states, and moves}\label{sec:BarryTime}

We begin by defining a particular thickening $\mathbb{T}(X)$ of a cube complex $X$ (shown in \Cref{fig:BarryX-labeling}) that, when applied to a cube complex $X$ with the appropriate properties, will admit a legal system of moves. 

\begin{defn}\label{def:state}
    Let $X$ be a cube complex for which every vertex is at cubical distance $2$ from some other vertex. The thickening $\mathbb{T}(X)$ has vertices given by the following subset of $V(X) \times V(X)$:
    \[ V(\mathbb{T}(X)) =\left\{(v,w) \; : \; d_{\text{cube}}(v,w) \geq 2 \, \text{ in } X \right\} \]
    The map $\alpha \colon V(\mathbb{T}(X)) \rightarrow V(X)$ is simply the projection onto the first factor.
\end{defn}

In other words, $V(\mathbb{T}(X))$ consists of ordered pairs of vertices $v, w$ of $X$ that are not contained in a common cube. To simplify notation, we will denote the second coordinate as a subscript, so that the $\alpha$-map just forgets the subscript, i.e., for the vertex $v_w\in V(\mathbb{T}(X))$ one has $\alpha(v_w)=v$. Also, if the vertex in the subscript is irrelevant, we will sometimes use the notation $\overline{v}$ to denote a generic element of  $\a^{-1}(v)$.

\begin{figure}[h]
\centering
\begin{tikzpicture}
    \newcommand{\cubecoords}{ 
    \coordinate (v0) at ({\scalar},0);
    \coordinate (v1) at ({\scalar*2},0);
    \coordinate (v2) at ({\scalar*4},0);
    \coordinate (v3) at ({\scalar*6},0);
    \coordinate (v4) at ({\scalar*8},0);
    \coordinate (v5) at ({\scalar*4},{\scalar*2});
    \coordinate (v6) at ({\scalar*6},{\scalar*2});
    \coordinate (v7) at ({\scalar*9},0);
    }
  \draw[ultra thick, black, dashed,-latex] (0.75,0) to node[anchor=south] {{\footnotesize $\alpha$ map}} (-0.75,0);
  \begin{scope}[shift={(-7.5,-1)}]
  \newcommand{\scalar}{0.75}
    \cubecoords
    \draw[thick, black, fill=gray, fill opacity=0.4] (v2) -- (v3) -- (v6) -- (v5) -- cycle;
    \draw[thick, black] (v1) -- (v2) -- (v3) -- (v4);
    \draw[thick, black, dotted] (v0) -- (v1);
    \draw[thick, black, dotted] (v4) -- (v7);
    \foreach \v in {v2,v3,v4,v5}{
      \draw[thick,black,fill=black] (\v) circle[radius=2pt];
    }
    \foreach \v in {v1,v6}{
      \draw[thick,cbred,fill=cbred] (\v) circle[radius=2pt];
    }
    \draw (v1) node[cbred,yshift=-1em] {{\footnotesize $v_1$}};
    \draw (v2) node[yshift=-1em] {{\footnotesize $v_2$}};
    \draw (v3) node[yshift=-1em] {{\footnotesize $v_3$}};
    \draw (v4) node[yshift=-1em] {{\footnotesize $v_4$}};
    \draw (v5) node[yshift=1em] {{\footnotesize $v_5$}};
    \draw (v6) node[cbred,yshift=1em] {{\footnotesize $v_6$}};
  \end{scope}
  \begin{scope}[shift={(0.5,-1)}]
  \newcommand{\RAD}{0.75};
  \newcommand{\rad}{0.3};
  \newcommand{\labdist}{3em};
  \newcommand{\scalar}{1}
  \cubecoords
  \begin{scope}[shift={(v1)}]
    \coordinate (v1v3) at ({360*(0/5)}:{\RAD});
    \coordinate (v1v4) at ({360*(1/5)}:{\RAD});
    \coordinate (v1v5) at ({360*(2/5)}:{\RAD});
    \coordinate (v1v6) at ({360*(3/5)}:{\RAD});
    \coordinate (v1dots) at ({360*(4/5)}:{\RAD});
  \end{scope}
  \begin{scope}[shift={(v2)}]
    \coordinate (v2v4) at ({360*(3/8)}:{\rad});
    \coordinate (v2dots) at ({360*(1/2+3/8)}:{\rad});
  \end{scope}
  \begin{scope}[shift={(v3)}]
    \coordinate (v3v1) at ({360*(1/8)}:{\rad});
    \coordinate (v3dots) at ({360*(1/2+1/8)}:{\rad});
  \end{scope}
  \begin{scope}[shift={(v4)}]
    \coordinate (v4v1) at ({360*(1/5)}:{\rad});
    \coordinate (v4v2) at ({360*(2/5)}:{\rad});
    \coordinate (v4v5) at ({360*(3/5)}:{\rad});
    \coordinate (v4v6) at ({360*(4/5)}:{\rad});
    \coordinate (v4dots) at ({360*(5/5)}:{\rad});
  \end{scope}
  \begin{scope}[shift={(v5)}]
    \coordinate (v5v1) at ({360*(-1/9+1/3)}:{\rad});
    \coordinate (v5v4) at ({360*(-1/9+2/3)}:{\rad});
    \coordinate (v5dots) at ({360*(-1/9+3/3)}:{\rad});
  \end{scope}
  \begin{scope}[shift={(v6)}]
    \coordinate (v6v1) at ({360*(1/3)}:{\RAD});
    \coordinate (v6v4) at ({360*(2/3)}:{\RAD});
    \coordinate (v6dots) at ({360*(3/3)}:{\RAD});
  \end{scope}
  \draw[gray, fill=gray, fill opacity=0.2] (v2dots) -- (v3dots) -- (v3v1) -- (v6dots) -- (v6v1) -- (v5v1) -- (v5v4) -- (v2v4) -- cycle;
  \foreach \v in {v2v4,v2dots,v3v1,v3dots,v5v1,v5v4,v5dots,v6v1,v6v4,v6dots}{
    \foreach \w in {v2v4,v2dots,v3v1,v3dots,v5v1,v5v4,v5dots,v6v1,v6v4,v6dots}{
      \draw[gray] (\v) -- (\w);
    }
  }  
  \draw[gray, fill=gray, fill opacity=0.2] (v2v4) -- (v2dots) -- (v1dots) -- (v1v6) -- (v1v5) -- (v1v4) -- cycle;
  \foreach \v in {v1v3,v1v4,v1v5,v1v6,v1dots}{
    \foreach \w in {v2v4,v2dots}{
      \draw[black] (\v) -- (\w);
    }
  }
  \foreach \v in {v2v4,v2dots}{
    \foreach \w in {v3v1,v3dots}{
      \draw[black] (\v) -- (\w);
    }
  }
  \foreach \v in {v2v4,v2dots}{
    \foreach \w in {v5v1,v5v4,v5dots}{
      \draw[black] (\v) -- (\w);
    }
  }
  \draw[gray, fill=gray, fill opacity=0.2] (v3v1) -- (v3dots) -- (v4v6) -- (v4dots) -- (v4v1) -- cycle;
  \foreach \v in {v3v1,v3dots}{
    \foreach \w in {v4v1,v4v2,v4v5,v4v6,v4dots}{
      \draw[black] (\v) -- (\w);
    }
  }
  \foreach \v in {v3v1,v3dots}{
    \foreach \w in {v6v1,v6v4,v6dots}{
      \draw[black] (\v) -- (\w);
    }
  }
  \foreach \v in {v5v1,v5v4,v5dots}{
    \foreach \w in {v6v1,v6v4,v6dots}{
      \draw[black] (\v) -- (\w);
    }
  }
  \foreach \v in {v2v4,v2dots}{
    \foreach \w in {v2v4,v2dots}{
      \draw[thick,black] (\v) -- (\w);
    }
  }
  \foreach \v in {v3v1,v3dots}{
    \foreach \w in {v3v1,v3dots}{
      \draw[thick,black] (\v) -- (\w);
    }
  }
  \foreach \v in {v4v1,v4v2,v4v5,v4v6,v4dots}{
    \foreach \w in {v4v1,v4v2,v4v5,v4v6,v4dots}{
      \draw[thick,black] (\v) -- (\w);
    }
  }
  \foreach \v in {v5v1,v5v4,v5dots}{
    \foreach \w in {v5v1,v5v4,v5dots}{
      \draw[thick,black] (\v) -- (\w);
    }
  }
  \draw[white,opacity=0.75,fill=white,fill opacity=0.75] (v1) circle[radius=1.5];
  \draw[cbred, fill=cbred, fill opacity=0.2] (v1v3) -- (v1v4) -- (v1v5) -- (v1v6) -- (v1dots) -- cycle;
  \begin{scope}[shift={(v1)}]
    \draw ({360*(0/5)}:{\RAD+\labdist}) node[rotate=90,color=cbred] {{\footnotesize $\boldsymbol{\cdots}$}};
    \draw ({360*(1/5)}:{\RAD+\labdist}) node[color=cbred] {{\footnotesize $(v_1,v_3)$}};
    \draw ({360*(2/5)}:{\RAD+\labdist}) node[color=cbred] {{\footnotesize $(v_1,v_4)$}};
    \draw ({360*(3/5)}:{\RAD+\labdist}) node[color=cbred] {{\footnotesize $(v_1,v_5)$}};
    \draw ({360*(4/5)}:{\RAD+\labdist}) node[color=cbred] {{\footnotesize $(v_1,v_6)$}};
    \foreach \v in {v1v3,v1v4,v1v5,v1v6,v1dots}{
      \foreach \w in {v1v3,v1v4,v1v5,v1v6,v1dots}{
        \draw[thick,cbred] (\v) -- (\w);
      }
    }
    \foreach \v in {v1v3,v1v4,v1v5,v1v6,v1dots}{
      \draw[thick, cbred, fill=cbred] (\v) circle[radius=2pt];
    }
  \end{scope}
  \draw[white,opacity=0.75,fill=white,fill opacity=0.75] (v6) circle[radius=1.5];
  \draw[cbred, fill=cbred, fill opacity=0.2] (v6v1) -- (v6v4) -- (v6dots) -- cycle;
  \begin{scope}[shift={(v6)}]
    \draw ({360*(1/3)}:{\RAD+\labdist}) node[color=cbred] {{\footnotesize $(v_6, v_4)$}};
    \draw ({360*(2/3)}:{\RAD+\labdist}) node[rotate=-30,color=cbred] {{\footnotesize $\boldsymbol{\cdots}$}};
    \draw ({360*(3/3)}:{\RAD+\labdist}) node[color=cbred] {{\footnotesize $(v_6,v_1)$}};
    \foreach \v in {v6v1,v6v4,v6dots}{
      \foreach \w in {v6v1,v6v4,v6dots}{
        \draw[thick,cbred] (\v) -- (\w);
        }
    }
    \foreach \v in {v6v1,v6v4,v6dots}{
      \draw[thick, cbred, fill=cbred] (\v) circle[radius=2pt];
    }
  \end{scope}
  \end{scope}
\end{tikzpicture}
    \captionsetup{margin=0.75in}
    \captionof{figure}{A cube complex $X$ on the left and the thickening $\mathbb{T}(X)$ on the right, highlighting the labeling of the preimages $\alpha^{-1}(v_1)$ and $\alpha^{-1}(v_6)$.}
    \label{fig:BarryX-labeling}
\end{figure}
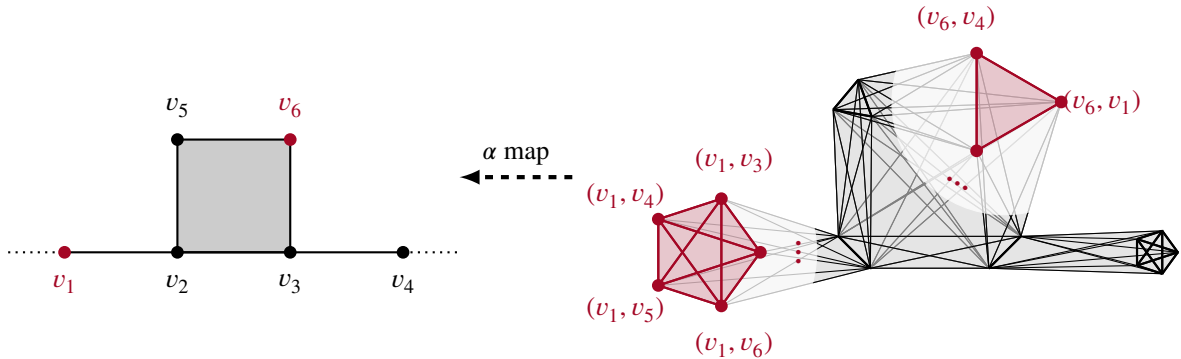

Next we define the initial state on $\mathbb{T}(X)$. We start by fixing once and for all a reference indexing $V(X) =\{ x_1, \ldots , x_k\}$ of the vertices of $X$. In terms of this reference indexing, we then assign 
\begin{equation}\label{eqn:starting state}
\mathcal{S}(v_w)=1 \text{ if and only if } v=x_i, \, w=x_j \text{ and } i<j. 
\end{equation}
 
Suppose $v, w \in V(X)$ with $d_{\text{cube}}(v,w) \geq 2$, so there are associated vertices $v_w, w_v \in \mathbb{T}(X)$. The key property of our starting state $\mathcal{S}$ is that it assigns opposite values to these two vertices. If $v=x_i$, $w = x_j$, and $ i < j$, then $\S(v_w) = 1$ and $\S(w_v) = 0$. We now define the system of moves that, when paired with the state $\S$ described in \eqref{eqn:starting state}, produce a legal system on $\mathbb{T}(X)$.
 
\begin{defn}\label{def:moves}
    Given any pair of vertices $v,w \in X$ at cubical distance $\geq 2$, we define the function $m_{vw} \colon V(\mathbb{T}(X))\rightarrow \Z_2$ by setting $m_{vw}(v_w)=m_{vw}(w_v)=1$ and $m_{vw}$ to be identically zero on all other vertices. 
\end{defn}

 
Note that the vertices $v_w$ and $w_v$ are not adjacent in $\mathbb{T}(X)$, since they are at cubical distance at least $2$ in $X$. This implies that each function $m_{vw}$ is a move at the vertices $v_w$ and $w_v$ (and in fact, $m_{vw}=m_{wv}$). Informally, the move $m_{vw}$ toggles the $\mathbb{Z}_2$-assignment of both vertices $v_w$ and $w_v$, and preserves the $\mathbb{Z}_2$-assignment of all other vertices. Let $M$ denote the subgroup generated by the system of moves.

\begin{rmk}\label{rem:opposite parity}
    Note that $m \S (v_w) \neq m \S(w_v)$ for any $m \in M$ and vertices $v, w \in V(X)$ with $d_{\text{cube}}(v,w) \geq 2$. Indeed, the initial state satisfies $\S(v_w) \neq \S(w_v)$, and every generator of $M$ assigns the same value to $v_w$ and $w_v$, and hence keeps the $\mathbb{Z}_2$-assignment of $v_w$ and $w_v$ different.
\end{rmk}

\subsection{Verifying the system is legal}

The thickening $\mathbb{T}(X)$ can be applied to any finite cube complex $X$ for which every vertex is at cubical distance $2$ from some other vertex. We will now find suitable conditions on $X$ to guarantee the initial state and moves on $\mathbb{T}(X)$ provide a legal system. The required assumptions on $X$ are contained in the next two definitions.

\begin{defn}
    A cube complex $X$ has \emph{no isolated corners} if, given any cube $\square \subset X$ and vertex $v\in \square$, there exists a vertex $w$ adjacent to $v$ and satisfying $w\notin \square$.
    $X$ has $\emph{no disconnecting cubes}$ if, for every $\square \subset X$, the full subcomplex with vertex set $V(X) \setminus V(\square)$ is connected.
\end{defn}

Our goal in this section will be to establish the following.

\begin{prop}\label{prop:BarryX-has-legal-system}
    Let $X$ be a $5$-large cube complex. If $X$ has no disconnecting cubes and no isolated corners, then the initial state $\mathcal{S}$ and system of moves $\{ m_{vw}\}$ defined in this section form a legal system of moves on the $1$-skeleton of $\mathbb{T}(X)$.
\end{prop}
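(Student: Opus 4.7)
The plan is to verify directly that for every $m \in M$ and every $\varphi := m+\mathcal{S}$, both color classes $\varphi^{-1}(0)$ and $\varphi^{-1}(1)$ are nonempty and each induces a connected subgraph of the $1$-skeleton of $\mathbb{T}(X)$. Nonemptiness is immediate from \Cref{rem:opposite parity}: for any pair $\{v,w\}$ with $d_{\text{cube}}(v,w)\geq 2$, the vertices $v_w$ and $w_v$ of $\mathbb{T}(X)$ carry opposite values under $\varphi$, so both colors are always represented. The real content is connectedness.

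To analyze connectedness, I would first observe that the $1$-skeleton of $\mathbb{T}(X)$ has a useful ``blown-up'' structure: each fiber $\alpha^{-1}(v)$ spans a clique, and for $v\neq v'$ with $d_{\text{cube}}(v,v')=1$, every vertex of $\alpha^{-1}(v)$ is adjacent to every vertex of $\alpha^{-1}(v')$. Let $G_X$ denote the auxiliary graph on $V(X)$ whose edges join the cubical distance-$1$ pairs, and set
\[
V_c := \{v\in V(X) \, : \, \alpha^{-1}(v)\cap \varphi^{-1}(c)\neq\emptyset\}.
\]
These observations show that $\varphi^{-1}(c)$ is connected in the $1$-skeleton of $\mathbb{T}(X)$ if and only if the subgraph of $G_X$ induced on $V_c$ is connected. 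The problem thus reduces to controlling $B_c := V(X) \setminus V_c$, the set of vertices of $X$ whose entire fiber is colored $1-c$ by $\varphi$.

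The heart of the argument is the claim that $B_c$ is contained in a single cube $\sigma$ of $X$. First, if $v, v' \in B_c$ were at cubical distance $\geq 2$, then $v_{v'}$ and $v'_{v}$ would be vertices of $\mathbb{T}(X)$, and \Cref{rem:opposite parity} would force one of them to have color $c$, contradicting membership of both $v, v'$ in $B_c$. Hence $B_c$ is pairwise at cubical distance $\leq 1$. Since $X$ is $5$-large, the cubical $2$-neighborhood of any fixed $v \in B_c$ is contractible and lifts isomorphically to the universal cover $\tilde X$; as $B_c$ sits inside the cubical $1$-neighborhood of $v$, its lift $\tilde B_c$ is a pairwise cubically adjacent subset of the CAT$(0)$ cube complex $\tilde X$. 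In a CAT$(0)$ cube complex, pairwise cubically adjacent vertices span a common cube (a standard fact via the median structure), so $\tilde B_c$ lies in some cube $\tilde\sigma$ of $\tilde X$; projecting yields the desired cube $\sigma \subseteq X$ with $B_c \subseteq V(\sigma)$.

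Given $\sigma$, the remaining hypotheses on $X$ finish the job. The no-disconnecting-cubes assumption implies that the subgraph of $G_X$ induced on $V(X) \setminus V(\sigma)$ is connected, and since $V(X)\setminus V(\sigma) \subseteq V_c$, this provides a connected backbone inside $V_c$. The no-isolated-corners assumption then guarantees that any remaining $v \in V_c \cap V(\sigma)$ has an edge-adjacent neighbor $w \in V(X) \setminus V(\sigma) \subseteq V_c$, which hooks $v$ onto that backbone in $G_X$. Hence $V_c$ is connected, and by the reduction above so is $\varphi^{-1}(c)$ in the $1$-skeleton of $\mathbb{T}(X)$. I expect the most delicate step to be the CAT$(0)$-cube-complex fact invoked above, together with the careful use of $5$-largeness of $X$ to lift $B_c$ coherently to $\tilde X$; everything else is a direct consequence of the definitions and \Cref{rem:opposite parity}.
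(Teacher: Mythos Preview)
Your proposal is correct and follows essentially the same strategy as the paper: both arguments isolate the set of vertices of $X$ whose entire fiber is monochrome of the wrong color (your $B_c$, the paper's $\delta_\epsilon^{-1}(\mathrm{N})$), prove this set lies in a single cube, and then invoke the no-disconnecting-cubes and no-isolated-corners hypotheses to connect the remaining vertices. Your reduction to connectedness of $V_c$ in the auxiliary graph $G_X$ is a clean repackaging of the paper's explicit path-lifting.

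The one substantive difference is in how you establish that $B_c$ lies in a common cube. You lift to the universal cover and invoke a CAT$(0)$ cube complex fact (pairwise cubically adjacent vertices span a cube). This is true, and can be proved via hyperplanes: if $h,h'$ each separate some pair from $B_c$ but fail to cross, one finds two members of $B_c$ separated by both $h$ and $h'$, contradicting that this pair lies in a common cube; since all separating hyperplanes pairwise cross, they are dual to a cube containing $B_c$. However, the paper takes a shorter route that stays entirely within the thickening: since $\mathbb{T}(X)$ is $5$-large (\Cref{lem:flag 5-large}) and hence flag, the fibers over the pairwise cubically adjacent vertices of $B_c$ form a clique in $\mathbb{T}(X)$, which therefore spans a simplex; by the definition of an $\alpha$-thickening, the $\alpha$-images of the vertices of any simplex lie in a common cube of $X$. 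This avoids both the lifting step and the appeal to CAT$(0)$ combinatorics, so you may prefer it for the final write-up.
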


The following definition and lemma will be used in the proof of this result.  

\begin{defn}
    Fix a cube complex $X$, a thickening Th$_\a(X)$, a state $S$ on $V(\text{Th}_\a(X))$, and $\epsilon \in \Z_2$. We define the \emph{detection function} corresponding to $S$ by
    \[ \delta_{\epsilon} \colon V(X) \longrightarrow \{ \text{Y}, \text{N} \} \]
    as follows. If there exists $\overline{v} \in \a^{-1}(v)$ with $S \left( \overline{v} \right) = \epsilon$, then $\delta_{\epsilon}(v) = \text{Y}$. Otherwise $\delta_{\epsilon}(v) = \text{N}$.
\end{defn}

The function $\delta_{\epsilon}$ detects whether or not there is a vertex in the $\a$-preimage of $v$ that is assigned a value of $\epsilon$ by $S$. If such a vertex exists, then $\delta_\epsilon$ assigns a value of Y for ``yes''. Otherwise, it assigns N for ``no''. Also note that, while we suppress this in the notation, the detection function $\delta_\epsilon$ depends on the state $S$. In what follows the given state will always be clear from context.

\begin{lem}\label{lem:maximal cube}
    Let $X$ be a $5$-large cube complex with no isolated corners and no disconnecting cubes, and let $\mathbb{T}(X)$ be the associated thickening.
    Fix $m \in M$ and $\epsilon \in \Z_2$. Let $\delta_\epsilon$ denote the detection function corresponding to the state $m \mathcal{S}$. Then there exists a cube $\square_m$ in $X$ such that $v \notin \square_m$ implies $\delta_{\epsilon}(v) = Y$.
\end{lem}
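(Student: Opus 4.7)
The plan is to use \Cref{rem:opposite parity} to force the set $N_\epsilon \coloneqq \{v \in V(X) : \delta_\epsilon(v) = N\}$ to consist of vertices pairwise contained in common cubes of $X$, and then promote this pairwise condition to the stronger statement that all of $N_\epsilon$ lies in a single cube.

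First I would identify each $m \in M$ with the symmetric set $F$ of unordered pairs $\{v,w\}$ with $d_{\text{cube}}(v,w) \geq 2$ that $m$ flips. Since distinct generators $m_{vw}, m_{v'w'}$ flip disjoint pairs of vertices of $\mathbb{T}(X)$, the subgroup $M$ is freely generated over $\mathbb{F}_2$ by the $m_{vw}$'s, and $m\mathcal{S}(v_w)$ differs from $\mathcal{S}(v_w)$ if and only if $\{v,w\} \in F$. Then I would show any two vertices $v, v' \in N_\epsilon$ share a common cube: if $d_{\text{cube}}(v,v') \geq 2$, the vertices $v_{v'}$ and $v'_v$ both lie in $V(\mathbb{T}(X))$, and the conditions $\delta_\epsilon(v) = \delta_\epsilon(v') = N$ force both $m\mathcal{S}(v_{v'})$ and $m\mathcal{S}(v'_v)$ to equal $1 - \epsilon$, contradicting \Cref{rem:opposite parity}. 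Hence any two vertices of $N_\epsilon$ are at cubical distance at most $1$.

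The remaining step is to promote this pairwise condition to containment of all of $N_\epsilon$ in a single cube of $X$. If $|N_\epsilon| \leq 1$, take $\square_m$ to be any vertex. Otherwise, fix $v_0 \in N_\epsilon$; every other element of $N_\epsilon$ then lies in the cubical $1$-neighborhood of $v_0$, and every cube containing a pair from $N_\epsilon$ lies in the cubical $2$-neighborhood of $v_0$. By the $5$-large hypothesis, this $2$-neighborhood lifts isomorphically to the CAT(0) universal cover $\tilde X$, so the lifts $\tilde v_i$ of the vertices of $N_\epsilon$ remain pairwise in a common cube in $\tilde X$. I would then invoke the standard fact that vertices of a CAT(0) cube complex which are pairwise in common cubes lie jointly in a single cube; this can be verified directly by a short hyperplane argument, using that any hyperplane separating a given pair of the $\tilde v_i$'s belongs to at least two of the pairwise-separating sets $\mathcal{H}(\tilde v_i, \tilde v_j)$, each of which consists of pairwise crossing hyperplanes. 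The resulting cube in $\tilde X$ is contained in the cubical $1$-neighborhood of the lift of $v_0$, hence projects injectively to the desired cube $\square_m \subset X$. The main obstacle is this final promotion: the $5$-large hypothesis is the precise tool that makes the lift to $\tilde X$ admissible, after which CAT(0) geometry provides the joint cube.
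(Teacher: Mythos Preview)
Your argument is correct, and the first step (using \Cref{rem:opposite parity} to force any two vertices in $N_\epsilon$ to be at cubical distance~$1$) matches the paper exactly. Your opening paragraph expressing $m$ via a set $F$ of flipped pairs is unnecessary, since \Cref{rem:opposite parity} already delivers the parity conclusion directly; but it does no harm.

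The difference is in the promotion step. You lift to the CAT($0$) universal cover $\tilde X$ via the contractible cubical $2$-neighborhood and then invoke the Helly-type fact that vertices of a CAT($0$) cube complex pairwise sharing cubes share a common cube. The paper instead observes that this fact is already packaged in \Cref{lem:flag 5-large}: since $\mathbb{T}(X)$ is flag, choosing any preimage $\overline{v_i}\in\alpha^{-1}(v_i)$ for each $v_i\in N_\epsilon$ gives a set of pairwise adjacent vertices in $\mathbb{T}(X)$, hence a simplex, and by the definition of an $\alpha$-thickening the images $v_i$ lie in a common cube of $X$. This is shorter because the lift to $\tilde X$ and the hyperplane argument you sketch are precisely what went into proving \Cref{lem:flag 5-large} (via \cite[Prop.~3.4]{osajda}); you are essentially re-deriving the flag property of $\Th_1$ rather than citing it. Your route has the minor advantage of being self-contained at the level of CAT($0$) geometry, but the paper's route reuses prior work more efficiently.
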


Note that the cube $\square_m$ may not be unique. In fact, $\delta_{\epsilon}(v)$ could be $\text{Y}$ for all $v \in V(X)$, in which case any cube in $X$ vacuously satisfies the conclusion of the lemma. Further, it is possible that $\delta_{\epsilon}(v) = \text{Y}$ for some (or even all) of the vertices $v \in \square_m$.

\begin{proof}[Proof of \Cref{lem:maximal cube}]
Suppose $v \in V(X)$ satisfies $\delta_\epsilon(v) = \text{N}$, and let $w \in V(X)$ be any vertex with $d_{\text{cube}}(v,w) \geq 2$. Then there are associated vertices $v_w$ and $w_v$ in $V(\mathbb{T}(X))$, and \Cref{rem:opposite parity} implies that $m \mathcal{S}(v_w) \neq m \mathcal{S}(w_v)$. Since $\delta_\epsilon(v) = \text{N}$, this means by definition that $m \mathcal{S} (v_w) \neq \epsilon$, and therefore $m \mathcal{S} (w_v) = \epsilon$ and so $\delta_{\epsilon}(w) = \text{Y}$. This shows that all vertices $x$ with $\delta_\epsilon(x) = \text{N}$ must be at cubical distance $1$ from each other.

It now remains to argue that all such vertices $\{v_1, \ldots , v_k\}$ lie in a common cube in $X$. We know from \Cref{lem:flag 5-large} that $\mathbb{T}(X)$ is $5$-large, and hence flag. By the above reasoning, the preimages of all the $v_i$ form a collection of pairwise adjacent vertices in $\mathbb{T}(X)$. Since $\mathbb{T}(X)$ is flag, this collection of vertices span a simplex in $\mathbb{T}(X)$. By the definition of a thickening, this implies the vertices $\{v_1, \ldots , v_k\}$ are all contained in a common cube of $X$.
\end{proof}

With the Lemma in hand, we can now establish \Cref{prop:BarryX-has-legal-system}.

\begin{proof}[Proof of \Cref{prop:BarryX-has-legal-system}]
Choose $m \in M$ and $\epsilon \in \Z_2$, and let $\square_m$ be the cube provided by \Cref{lem:maximal cube}. Then fix $\overline{x}, \overline{y} \in V(\mathbb{T}(X))$ such that $m \mathcal{S} \left( \overline{x} \right) = m \mathcal{S} \left( \overline{y} \right) = \epsilon$. We must show that $\overline{x}$ and $\overline{y}$ are connected by a path within the subgraph induced by the set of vertices $\left( m \mathcal{S} \right)^{-1} (\epsilon)$.  

Set $x=\alpha\left(\overline{x}\right)$ and $y=\alpha\left(\overline{y}\right)$. We first assume that neither $x$ nor $y$ are contained in $\square_m$. Since $X$ has no disconnecting cubes, there is a (cubical) path $x * v_1 * v_2 * \hdots * v_k * y$ that does not meet $\square_m$. Then $\delta_\epsilon (v_i) = \text{Y}$ for each $i$, since all vertices with an output of N are contained in $\square_m$. Hence, for each $i$ there exists $\overline{v}_i \in \alpha^{-1}(v_i)$ such that $m \mathcal{S} \left( \overline{v}_i \right) = \epsilon$. The path $\overline{x} * \overline{v}_1 * \hdots * \overline{v}_k * \overline{y}$ thus connects $\overline{x}$ to $\overline{y}$ via vertices in $\left( m \mathcal{S} \right)^{-1} (\epsilon)$.

Now, suppose that either $x$ or $y$ is in $\square_m$. If both $x$ and $y$ are in $\square_m$ then the desired conclusion is clear, since all vertices in $\alpha^{-1}(x)$ and $\alpha^{-1}(y)$ are connected by an edge in $\mathbb{T}(X)$. Without loss of generality assume $y$ is in $\square_m$ and $x$ is not. Since $X$ has no isolated corners, there exists $z \in V(X)$ connected to $y$ by an edge, with $z \notin \square_m$. The reasoning in the previous paragraph connects $\overline{x}$ to some vertex $\overline{z} \in \alpha^{-1}(z)$ with a path contained in $\left( m \mathcal{S} \right)^{-1} (\epsilon)$. We then append $\overline{y}$ to the end of the path since it is connected by an edge to $\overline{z}$ by assumption.
\end{proof}

\begin{rmk}
    As shown above, the assumptions that there are no disconnecting cubes and no isolated corners on $X$ are sufficient for $X$ to admit an $\alpha$-thickening with a legal system of moves.
    For large cube complexes, one would expect that these conditions are also necessary by the following reasoning. Suppose $X$ does not satisfy the no disconnecting cube condition, and let $\square$ denote a maximal cube whose removal disconnects $X$. Then one can apply a sequence of moves making every vertex $v \in \square$ satisfy $\delta_{\epsilon}(v) = \text{N}$. In this way, $\square$ would serve as an ``$\epsilon$-blockade'' between the two components of $X \setminus \square$, and thus the corresponding system of moves would not be legal. If $X$ does not satisfy the no isolated corner condition, then any such corner vertex $v \in \square$ could be similarly blocked from the rest of the cube complex, by applying a sequence of moves that change the vertices above all the vertices in $\square \setminus v$.
    For small cube complexes (say, for example, three vertices connected as a line segment), these assumptions are not quite necessary, but one would expect these conditions to be necessary for larger complexes.
\end{rmk}

\section{Proof of the Main Theorem}\label{sec:MainPf}

We now prove our main theorem following the inductive scheme outlined in the introduction. We start with a finite cube complex $X_n$ having the following four properties:
\begin{itemize}
    \item $X_n$ is $5$-large;
    \item $X_n$ has no disconnecting cubes;
    \item $X_n$ has no isolated corners;
    \item $X_n$ has cohomological dimension $n$.
\end{itemize}

\bigskip

We start our process at the initial stage $n=1$, by taking $X_1$ to be a cycle of length at least five. Note that all four of the above properties are then immediate.

Now consider the thickening $T_n\coloneqq \mathbb{T}(X_n)$ of $X_n$ defined in the preceding section. The $1$-skeleton of the simplicial complex $T_n$ defines a right-angled Coxeter group $G_{n+1}$. The proof now has the following three steps.

\bigskip

\noindent \textbf{Step 1:} Verify that the $1$-skeleton of $T_n$ has the sufficient graph theoretic properties for $G_{n+1}$ to be Gromov hyperbolic and virtually fiber.

\bigskip

\noindent \textbf{Step 2:} Check that $G_{n+1}$ has virtual cohomological dimension $n+1$.

\bigskip

\noindent \textbf{Step 3:} If
$D_{n+1}$ is the Davis complex associated with the Coxeter group $G_{n+1}$, then verify that there is a suitable torsion-free finite index subgroup $\Lambda \leq G_{n+1}$ so that $X_{n+1}=D_{n+1}/\Lambda$ satisfies the four properties in our inductive scheme.

\bigskip

The first step follows immediately from the arguments in \Cref{sec:thickening}. Indeed, \Cref{lem:flag 5-large} shows that the simplicial complex $T_n$ is $5$-large, so the Coxeter group $G_{n+1}$ will be Gromov hyperbolic. Moreover, \Cref{prop:BarryX-has-legal-system} shows that the $1$-skeleton of $T_n$ has a legal system, so the Coxeter group $G_{n+1}$ will virtually fiber by \cite[Thm.\ 6.14]{JNW}. This completes verification of the first step.

For Step 2, we want to compute the virtual cohomological dimension of the right-angled Coxeter group $G_{n+1}$ associated with the complex $T_n$. A result of Davis (see \cite[Cor.\ 8.5.5]{Davis}) allows us to compute the virtual cohomological dimension from the topology of the simplicial complex $T_n$:
\begin{equation}\label{eqn:vcd def}
    \vcd(G_{n+1})= \max\!\left\{k\ :\ \overline{H}{}^{k-1}(T_n\setminus \sigma)\neq 0 \textrm{ for some } \sigma \subset T_n\right\}\!.
\end{equation}
To calculate the cohomological dimension of spaces of the form $T_n\setminus \sigma$ where $\sigma$ is a simplex of $T_n$, we will use a Mayer--Vietoris sequence. This will require certain auxiliary subsets of $T_n$ that we now introduce.

Given a simplicial complex $\Sigma$, we can metrize $\Sigma$ by making each simplex isometric to a spherical all right simplex of the appropriate dimension. Therefore each edge has length $\pi/2$, each triangle is isometric to the portion of the $2$-sphere in the positive octant, and so forth. Given a simplex $\sigma\subset \Sigma$, consider the set $S(\sigma, \Sigma)$ of points at distance $\epsilon>0$ from $\sigma$, thought of as a sphere centered on $\sigma$. Note that the topology of this space is independent of the choice of small positive $\epsilon>0$. For example, if $v\in \Sigma$ is any vertex, then $S(v, \Sigma)$ is homeomorphic to the link $\lk(v,\Sigma)$. However, for higher dimensional simplices, $S(\sigma, \Sigma)$ is a more complicated space built up from the links of $\sigma$ as well as the links of the faces of $\sigma$. In the next section, we will establish the following:

\begin{prop}\label{prop:cd-spherical-nbhds}
    For every nonempty simplex $\sigma \subset T_n$, we have that $\cdim \bigl(S(\sigma, T_n)\bigr)\leq n-1$.
\end{prop}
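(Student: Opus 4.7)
The plan is to reduce the proposition to a bound on the cohomological dimension of links in $T_n$, which can then be established using the thickening structure of $\mathbb{T}(X_n)$ together with the fact that $\cd T_n = \cd X_n = n$.

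First, I would verify the topological identification $S(\sigma, T_n) \cong \sigma \times \lk(\sigma, T_n)$ for any nonempty simplex $\sigma$ and sufficiently small $\epsilon$. Within each simplex $\tau \supseteq \sigma$ of $T_n$, the decomposition $\tau = \sigma * \tau'$ with $\tau' = \lk(\sigma, \tau)$ together with spherical join coordinates identifies the set of points of $\tau$ at distance $\epsilon$ from $\sigma$ with a copy of $\sigma \times \tau'$; these product pieces glue coherently across all $\tau \supseteq \sigma$ to give the claimed homeomorphism. Since $\sigma$ is contractible, $S(\sigma, T_n) \simeq \lk(\sigma, T_n)$, and the proposition reduces to showing $\cd \lk(\sigma, T_n) \leq n - 1$.

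Next, I would analyze $\lk(\sigma, T_n)$ combinatorially using the definition of the thickening. Let $\square_\sigma$ denote the minimal cube of $X_n$ containing $\alpha(\sigma)$, which exists since $\alpha(\sigma)$ lies in a common cube. Then the vertices of $\lk(\sigma, T_n)$ are precisely the elements $y \in V(\mathbb{T}(X_n)) \setminus \sigma$ whose $\alpha$-image lies in some cube of $X_n$ containing $\square_\sigma$, with simplicial structure inherited from $T_n$. Via a deformation retract argument in the spirit of \Cref{lem:flag 5-large}, I would identify $\lk(\sigma, T_n)$ up to homotopy with an $\alpha$-thickening of the cubical link $\lk(\square_\sigma, X_n)$, adjusted for the preimages of $\alpha$ already consumed by $\sigma$.

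The technical heart is then controlling the cohomological dimension of this cubical link. Lifting locally to the CAT$(0)$ universal cover $\tilde{X}_n$, which is made possible by the $5$-largeness and the contractibility of cubical $2$-neighborhoods, stars of cubes are convex and hence contractible, which gives control over local cohomology at $\square_\sigma$. Combining this with the global bound $\cd X_n = n$ via a Mayer--Vietoris argument paired with local cohomology should yield $\cd \lk(\square_\sigma, X_n) \leq n - \dim(\square_\sigma) - 1 \leq n - 1$. The main obstacle is handling the interaction between the extra $\alpha$-preimages used by $\sigma$ and the cubical link, particularly in pathological cases where $\sigma$ consumes all of $\alpha^{-1}(V(\square_\sigma))$. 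Pulling the bound back through the homotopy equivalence above then gives $\cd(S(\sigma, T_n)) = \cd(\lk(\sigma, T_n)) \leq n - 1$.
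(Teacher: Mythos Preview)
Your first step --- the claimed homeomorphism $S(\sigma, T_n) \cong \sigma \times \lk(\sigma, T_n)$ --- is false, and the proposal breaks down immediately there. The set $S(\sigma, T_n)$ consists of \emph{all} points at distance $\epsilon$ from the simplex $\sigma$, not only those lying in simplices containing $\sigma$. A point $p$ close to a proper face $\sigma_p \subsetneq \sigma$ may lie in a simplex $\sigma_p * \tau_p$ where $\tau_p$ is not in $\lk(\sigma, T_n)$ at all (because $\tau_p$ fails to be adjacent to the vertices of $\sigma \setminus \sigma_p$). In particular, any vertex $w$ adjacent to one vertex of $\sigma$ but not to another contributes to $S(\sigma, T_n)$ yet is invisible to $\lk(\sigma, T_n)$. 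Your product pieces over simplices $\tau \supseteq \sigma$ only account for the portion of $S(\sigma, T_n)$ sitting inside the open star of $\sigma$; they miss everything coming from the links of proper faces. So the reduction ``$S(\sigma,T_n)\simeq \lk(\sigma,T_n)$'' is simply wrong, and with it the rest of the argument collapses.

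The paper handles exactly this difficulty. It replaces $S(\sigma, T_n)$ by the combinatorial $1$-sphere $\hat S(\sigma, T_n)$ --- the full subcomplex on vertices adjacent to \emph{some} vertex of $\sigma$ --- and then filters $\hat S(\sigma, T_n)$ according to which face of $\sigma$ each vertex is adjacent to. The bottom of the filtration \emph{is} $\lk(\sigma, T_n)$, and the bound $\cd(\lk(\sigma, T_n)) \leq n-1$ (your second step, roughly) is indeed needed as the base case. But one must then show inductively that attaching, for each proper face $\tau \subsetneq \sigma$, the piece coming from $\lk(\tau, T_n)$ does not raise the cohomological dimension. This uses $5$-largeness to control how the new pieces meet the previous stage, together with a Mayer--Vietoris comparison lemma. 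Your sketch of bounding $\cd(\lk(\square_\sigma, X_n))$ is in the right spirit for the base case, but the filtration and gluing analysis is the real content, and it is entirely absent from your proposal.
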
 

Here the cohomological dimension $\cdim(Z)$ of a space $Z$ is the largest integer $k$ such that $\overline{H}{}^{k}(Z)$ is nontrivial, which is a slight abuse of terminology\footnote{This is not the standard definition, as cohomological dimension is usually defined as the largest $k$ so that $\overline{H}{}^{k}(Z, C) \neq 0$ for some closed subset $C\subset Z$. Throughout our paper we are working with the special case where $C=\emptyset$. Nevertheless, it will be convenient to use the notation $\cdim(Z)$, which could be smaller than the classically defined cohomological dimension of $Z$. We hope this will not cause confusion for the reader.}. The proof of \Cref{prop:cd-spherical-nbhds} will be postponed until the next section, as it is somewhat involved. Assuming this result, we can now continue our verification of the second step.

\begin{prop}\label{prop:vcd-exact}
    The group $G_{n+1}$ has virtual cohomological dimension $= n+1$.
\end{prop}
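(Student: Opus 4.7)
The plan is to apply Davis's formula \eqref{eqn:vcd def} with $\sigma$ ranging over all simplices of $T_n$ including the empty simplex, and establish $\vcd(G_{n+1})=n+1$ by proving matching upper and lower bounds. The key ingredients are \Cref{prop:cd-spherical-nbhds}, the homotopy equivalence $T_n\simeq X_n$ from \Cref{sec:alpha-thickening}, and the inductive hypothesis $\cdim(X_n)=n$.

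For the upper bound $\vcd(G_{n+1})\leq n+1$, I would fix a simplex $\sigma\subset T_n$ and show that $\overline{H}^{k-1}(T_n\setminus\sigma)=0$ for every $k\geq n+2$. The case $\sigma=\emptyset$ is immediate since $T_n\setminus\emptyset = T_n\simeq X_n$ has cohomology vanishing above degree $n$. For nonempty $\sigma$, metrize $T_n$ with its all-right spherical metric, let $V\subset T_n$ be a sufficiently small open $\epsilon$-neighborhood of $\sigma$ so that $V$ deformation retracts onto $\sigma$, and put $U\coloneqq T_n\setminus\sigma$. Then $U\cup V=T_n$ and $U\cap V$ deformation retracts onto the spherical neighborhood $S(\sigma,T_n)$. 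In the Mayer--Vietoris sequence
\[
H^{k-1}(S(\sigma,T_n)) \to H^k(T_n) \to H^k(U) \oplus H^k(V) \to H^k(S(\sigma,T_n)),
\]
the left and right terms vanish for $k-1\geq n$ by \Cref{prop:cd-spherical-nbhds}, the term $H^k(T_n)\cong H^k(X_n)$ vanishes for $k\geq n+1$, and $H^k(V)=0$ for $k\geq 1$ since $V$ is contractible. Exactness then forces $H^k(U)=0$ for all $k\geq n+1$, which translates to $\overline{H}^{k-1}(T_n\setminus\sigma)=0$ for all $k\geq n+2$.

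For the lower bound $\vcd(G_{n+1})\geq n+1$, I would take $\sigma=\emptyset$ in \eqref{eqn:vcd def}, so that $T_n\setminus\sigma=T_n$. Since $T_n\simeq X_n$ and $\cdim(X_n)=n\geq 1$, we have $\overline{H}^n(T_n)=H^n(T_n)\cong H^n(X_n)\neq 0$, so the value $k=n+1$ is achieved in the maximum. Combining the two bounds yields $\vcd(G_{n+1})=n+1$. Granting \Cref{prop:cd-spherical-nbhds}, which is where the real work resides, the entire argument reduces to a routine Mayer--Vietoris computation and the inductive hypothesis; the only mild subtlety is the identification of $U\cap V$ with a space of the homotopy type of $S(\sigma,T_n)$, which is standard for regular neighborhoods in a finite simplicial complex.
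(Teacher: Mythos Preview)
Your proposal is correct and follows essentially the same argument as the paper: both obtain the lower bound from $\sigma=\emptyset$ via $T_n\simeq X_n$ and $\cdim(X_n)=n$, and both obtain the upper bound by covering $T_n$ with a contractible neighborhood of $\sigma$ together with $T_n\setminus\sigma$ and invoking Mayer--Vietoris and \Cref{prop:cd-spherical-nbhds}. The only cosmetic difference is that the paper swaps the roles of $U$ and $V$ and phrases the conclusion as an isomorphism $H^i(T_n\setminus\sigma)\cong H^i(S(\sigma,T_n))$ for $i\geq n+1$ rather than directly reading off vanishing from the four-term exact sequence.
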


\begin{proof}
Equation \eqref{eqn:vcd def} implies the virtual cohomological dimension of $G_{n+1}$ can be computed from the complex $T_n$. In particular, its vcd is one more than the maximal cohomological dimension $\cdim(T_n\setminus \sigma)$, where $\sigma$ ranges over all simplices $\sigma \subset T_n$. In the special case where $\sigma = \emptyset$, we see that $\cdim(T_n)=\cdim(X_n) = n$, establishing the lower bound $\vcd(G_{n+1}) \geq n+1$.

To show that $\vcd(G_{n+1})= n+1$ we must verify that $\cdim(T_n \setminus \sigma)\leq n$ for each nonempty simplex $\sigma \subset T_n$. Given such a simplex $\sigma$, cover $T_n$ by two open sets $U,V$, by setting $U$ to be an $\epsilon$-neighborhood of $\sigma$, and $V$ to be the complement $T_n \setminus \sigma$. The intersection $U\cap V$ is then homotopy equivalent to $S(\sigma, T_n)$. Since $U$ is contractible, the Mayer--Vietoris sequence in reduced cohomology reduces to the long exact sequence:
    \[ \cdots \longrightarrow \overline{H}{}^i(T_n) \longrightarrow \overline{H}{}^i(T_n\setminus \sigma) \longrightarrow \overline{H}{}^i(S(\sigma, T_n)) \longrightarrow \overline{H}{}^{i+1}(T_n) \longrightarrow \cdots \]
Since $\cdim(T_n)=n$, the sequence gives us isomorphisms $\overline{H}{}^i(T_n\setminus \sigma)
\cong \overline{H}{}^i(S(\sigma, T_n))$ in all degrees $i\geq n+1$. However we know from \Cref{prop:cd-spherical-nbhds} that $\cdim(S(\sigma, T_n))\leq n-1$, so the groups $\overline{H}{}^i(T_n\setminus \sigma)$ then vanish when $i\geq n+1$. We conclude that $\cdim(T_n\setminus \sigma)\leq n$, as desired. This completes the proof of the proposition.
\end{proof}

We now proceed to the third step, namely checking that the inductive hypotheses are satisfied for a suitable quotient $X_{n+1}$ of the Davis complex $D_{n+1}$. We first prove that the analogous properties for the associated Davis complex are satisfied, then verify that they descend to a suitably chosen quotient of the Davis complex.

\begin{lem}\label{lem:davis-clx-inductive-step}
    The Davis complex $D_{n+1}$ is $5$-large, has no isolated corners, has no disconnecting cubes, and the group $G_{n+1}$ is one-ended.
\end{lem}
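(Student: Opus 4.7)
The plan is to reduce each of the four properties to statements about the defining graph $T_n^{(1)}$, using that the link of every vertex in $D_{n+1}$ is isomorphic to the flag complex $T_n$. First I would verify that $D_{n+1}$ is $5$-large. Local $5$-largeness is immediate: by \Cref{lem:flag 5-large} applied to $X_n$, the complex $T_n$ is $5$-large, and hence so is every vertex link in $D_{n+1}$. Since the Davis complex is simply connected, Gromov's Lemma gives that $D_{n+1}$ is CAT($0$). For the contractibility of the cubical $2$-neighborhood $N_2(v)$ of any vertex $v$, I would argue that $N_2(v)$ deformation retracts onto the closed star of $v$, which is contractible as a cone on the link. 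The flag-no-square property of the links prevents short obstructing cycles in combinatorial $2$-neighborhoods, so the required local retractions can be consistently defined.

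For no isolated corners: cubes at a vertex $g \in D_{n+1}$ are in bijection with simplices $\sigma \subset T_n$, with the edges of the cube at $g$ indexed by the vertices of $\sigma$. An isolated corner at $g$ in a cube $\square$ would force every generator of $G_{n+1}$ to lie in the simplex corresponding to $\square$, i.e., $T_n^{(1)}$ would have to be complete. However $T_n^{(1)}$ is not complete: for any two vertices $v,v' \in V(X_n)$ with $d_{\text{cube}}(v,v') \geq 2$ and any valid choice of second coordinates, the vertices $(v,w),(v',w') \in V(T_n)$ are non-adjacent. Such pairs exist because $V(T_n)$ is nonempty and its $\alpha$-image is not a single vertex under the inductive hypotheses on $X_n$.

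For one-endedness of $G_{n+1}$ and the no-disconnecting-cubes property of $D_{n+1}$, I would simultaneously verify that $T_n^{(1)}$ is connected, not complete, and has no separating clique. Not complete was established above. Connectedness follows by lifting a cubical path in the connected complex $X_n$ to a path in $T_n^{(1)}$ via suitable choices of second coordinates at each vertex. For no separating clique: a clique in $T_n^{(1)}$ consists of vertices whose first coordinates all lie in a common cube $\square' \subset X_n$; given two vertices $\overline{u},\overline{u}'' \in V(T_n)$ outside the clique, the no-disconnecting-cubes hypothesis on $X_n$ produces a cubical path from $u=\alpha(\overline{u})$ to $u''=\alpha(\overline{u}'')$ avoiding $\square'$, and this path lifts to one in $T_n^{(1)}$ avoiding the clique. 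With these properties of $T_n^{(1)}$, the classical characterization of ends of right-angled Coxeter groups (see \cite{Davis}) gives that $G_{n+1}$ is one-ended, and a parallel combinatorial argument directly on $D_{n+1}$—projecting a hypothetical disconnecting cube to the link to extract a separating simplex—yields no disconnecting cubes.

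The main obstacle I expect is the contractibility of cubical $2$-neighborhoods in $D_{n+1}$, which requires a genuinely geometric argument combining the CAT($0$) structure with the combinatorial $5$-large condition, together with the careful translation between graph-theoretic separation in $T_n^{(1)}$ and disconnection phenomena in the Davis complex. The remaining three properties are essentially combinatorial consequences of the structure of $T_n$ and the inductive hypotheses on $X_n$.
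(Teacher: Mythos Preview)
Your overall strategy matches the paper's: reduce each property to a statement about the link $T_n$, then use the inductive hypotheses on $X_n$. The arguments for no isolated corners and for one-endedness/no disconnecting cubes are essentially the paper's, though the paper phrases the isolated-corner argument cohomologically (the link at an isolated corner would be a single simplex, contradicting $\cd(T_n)\neq 0$) and cites Davis's Theorem~8.7.2 for the one-endedness criterion.

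The substantive gap is exactly where you flag it: contractibility of cubical $2$-neighborhoods. Your sketch---deformation retract $N_2(v)$ to the closed star using that flag-no-square ``prevents short obstructing cycles''---is not an argument, and it is not clear how to make it one directly. The paper sidesteps this entirely: since $D_{n+1}$ is a CAT($0$) cube complex, cubical $k$-neighborhoods of vertices are \emph{convex} (Haglund--Wise, Lemma~13.15), hence contractible. This is the key input you are missing; once you have it, the $5$-largeness of $D_{n+1}$ is immediate and needs no further combinatorics.

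There is also a small hole in your no-separating-clique argument. You take $\overline{u},\overline{u}''$ outside the clique and invoke no-disconnecting-cubes to get a path in $X_n$ from $u=\alpha(\overline{u})$ to $u''=\alpha(\overline{u}'')$ avoiding $\square'$. But $\overline{u}$ being outside the clique does not force $u\notin\square'$: the clique need not be all of $\alpha^{-1}(V(\square'))$. When $u\in\square'$ you must first step to an adjacent vertex outside $\square'$, which is where the no-isolated-corners hypothesis on $X_n$ enters. The paper handles this by pointing back to the proof of \Cref{prop:BarryX-has-legal-system}, which already contains exactly this two-case analysis.
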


\begin{proof}
Links of the vertices of $D_{n+1}$ are copies of the $5$-large simplicial complex $T_n$, so local $5$-largeness is immediate. Moreover, cubical neighborhoods of vertices in CAT($0$) cube complexes (such as $D_{n+1}$) are known to be convex (see \cite[Lem.\ 13.15]{HaglundWise}) hence contractible. This verifies that $D_{n+1}$ is $5$-large. Recall that the $1$-skeleton of the Davis complex is canonically identified with the Cayley graph of the Coxeter group with respect to its standard generators. To see that $D_{n+1}$ has no isolated corners, just note that at an isolated corner, the link would have to be a single simplex. This is impossible, since the link is $T_n$, which is not contractible since $\cdim(T_n)\neq 0$. It remains to check that $D_{n+1}$ has no disconnecting cubes, and it is sufficient to argue that $G_{n+1}$ is one-ended. This can be determined from $T_n$ by verifying that the complement $T_n \setminus \sigma$ is connected for every simplex $\sigma \subset T_n$; see \cite[Thm.\ 8.7.2]{Davis}. However, this is clear since $T_n$ is a thickening of $X_n$ and, by induction, $X_n$ has no disconnecting cubes and no isolated corners. Noting that the simplex $\sigma$ lies above a cube, the argument in the proof of \Cref{prop:BarryX-has-legal-system} shows the complement of $\sigma$ is connected.
\end{proof}

\begin{cor}
    For a suitable finite index torsion-free subgroup $\Lambda \leq G_{n+1}$, the finite cube complex $X_{n+1}\coloneqq D_{n+1}/\Lambda$ is $5$-large, has no disconnecting cubes, has no isolated corners, and has cohomological dimension $=n+1$.
\end{cor}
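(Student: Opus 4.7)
The plan is to use residual finiteness of the hyperbolic right-angled Coxeter group $G_{n+1}$ to extract a torsion-free finite index subgroup $\Lambda$ whose action on $D_{n+1}$ has arbitrarily large systole (minimum displacement of nontrivial elements), and then verify each of the four required properties in turn. Right-angled Coxeter groups are virtually torsion-free by Tits's theorem and linear via the Tits representation, hence residually finite. Combining residual finiteness with hyperbolicity in the standard way---any bounded-length element of $G_{n+1}$ belongs to one of finitely many conjugacy classes, each of which can be avoided in a finite-index subgroup---produces torsion-free finite index $\Lambda \leq G_{n+1}$ whose systole on $D_{n+1}$ exceeds any prescribed constant $R$. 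I would choose $R$ larger than the diameter of any cubical $2$-neighborhood of a vertex in $D_{n+1}$, so that the covering map $p\colon D_{n+1}\to X_{n+1}\coloneqq D_{n+1}/\Lambda$ is injective on every such neighborhood and identifies it isomorphically with its image in $X_{n+1}$.

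Three of the four properties then follow almost immediately. Local $5$-largeness is a condition on links at vertices, and the contractibility of cubical $2$-neighborhoods together with the no-isolated-corners condition are properties of cubical $2$-neighborhoods of vertices and their adjacent cubes and edges. Each of these descends from $D_{n+1}$ (where they hold by \Cref{lem:davis-clx-inductive-step}) to $X_{n+1}$ via the choice of $R$. For the cohomological dimension, $X_{n+1}$ is a finite-dimensional $K(\Lambda,1)$ since $D_{n+1}$ is contractible and $\Lambda$ is torsion-free, so combined with \Cref{prop:vcd-exact} we obtain $\cdim(X_{n+1}) = \vcd(G_{n+1}) = n+1$.

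The main obstacle is the no-disconnecting-cubes property, which is genuinely global rather than local. A cube $\square \subset X_{n+1}$ disconnects $X_{n+1}$ if and only if $D_{n+1}\setminus \Lambda\cdot\tilde\square$ is disconnected for any lift $\tilde\square$. I would rule this out by an iterative rerouting argument: given any two vertices $x, y \in D_{n+1}\setminus\Lambda\cdot\tilde\square$, start with any combinatorial path $\gamma$ from $x$ to $y$ in $D_{n+1}$, then successively reroute $\gamma$ within a bounded neighborhood around each translate $\lambda\cdot\tilde\square$ that it enters. Each local reroute is possible because $D_{n+1}$ itself has no disconnecting cubes by \Cref{lem:davis-clx-inductive-step}, and can be performed inside a uniformly bounded neighborhood of the entered translate because $G_{n+1}$ acts cocompactly on $D_{n+1}$ and is Gromov hyperbolic, which controls the geometry of detours around any single cube. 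Choosing the systole $R$ larger than twice this uniform reroute diameter, distinct translates $\lambda\cdot\tilde\square$ lie far enough apart that rerouting near one introduces no intersection with any other, and the procedure terminates in a valid path in $D_{n+1}\setminus \Lambda\cdot\tilde\square$, as required.
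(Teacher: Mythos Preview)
Your argument is correct. The treatment of the local properties (local $5$-largeness, contractible cubical $2$-neighborhoods, no isolated corners) and of the cohomological dimension is essentially identical to the paper's: both use residual finiteness to arrange that cubical $2$-neighborhoods lift isomorphically, and both compute $\cdim(X_{n+1})=\cdim(\Lambda)=\vcd(G_{n+1})=n+1$ via \Cref{prop:vcd-exact}.

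The genuine difference is in the no-disconnecting-cubes step. The paper simply invokes one-endedness of $G_{n+1}$ (from \Cref{lem:davis-clx-inductive-step}) together with the fact that $D_{n+1}$ has no disconnecting cubes, calling the deduction ``standard''. Fleshed out, that argument goes via van Kampen: a disconnecting contractible cube in $X_{n+1}$ would force a free-product decomposition of $\Lambda$, and one-endedness then forces one piece to be simply connected, whose lift produces a disconnecting cube in $D_{n+1}$. That route shows the property holds for \emph{any} torsion-free finite index $\Lambda$ with the $2$-neighborhood lifting property. Your rerouting argument is a legitimate alternative that trades the appeal to one-endedness for a further enlargement of the systole; it is more explicit and self-contained, at the cost of constraining $\Lambda$ more. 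Two minor remarks: your ``if and only if'' is stronger than needed and not literally true (disconnected upstairs need not imply disconnected downstairs, since $\Lambda$ could permute components transitively), but you only use the direction ``connected upstairs $\Rightarrow$ connected downstairs'', which is immediate; and the uniform bound on reroute length follows from cocompactness of the $G_{n+1}$-action alone---Gromov hyperbolicity is not needed there.
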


\begin{proof}
Concerning the cohomological dimension, we have the series of equalities
    \[ \cd(X_{n+1})= \cd(\Lambda)=\vcd(G_{n+1})= n+1. \]
The first equality follows from the fact that the Davis complex $D_{n+1}$ is CAT($0$) and thus is contractible, hence $X_{n+1}$ is an Eilenberg--MacLane space $K(\Lambda,1)$. Since $\Lambda$ is a finite index torsion-free subgroup of $G_{n+1}$, the second equality is a well-known result of Serre \cite[Thm.\ F.3.4]{Davis}. The third equality follows directly from \Cref{prop:vcd-exact}. This establishes the statement on the cohomological dimension.
    
Next we observe that the properties of being locally $5$-large and having no isolated corners are \emph{purely local}, in that they can be determined by looking at the cubical $2$-neighborhood of vertices. From \Cref{lem:davis-clx-inductive-step} we see that the local requirements are verified for cubical $2$-neighborhoods of vertices in the Davis complex $D_{n+1}$. Therefore, if the $2$-cubical neighborhoods in $X_{n+1}$ lift to cubical $2$-neighborhoods in the universal cover, we would obtain the desired properties for $X_{n+1}$. It is now a very standard consequence of residual finiteness of the Coxeter group $G_{n+1}$ that there is a finite index subgroup $\Lambda$ of $G_{n+1}$ with this lifting property. Lastly, we need to verify that $X_{n+1}$ has no disconnecting cubes. This is again standard, following from \Cref{lem:davis-clx-inductive-step} since $D_{n+1}$ has no disconnecting cubes and $G_{n+1}$ is one-ended.
\end{proof}

\begin{rmk}
    Note that the cube complex $X_{n+1}$ has the property that there exists another vertex at cubical distance $\geq 2$ from $v$ for every vertex $v$. This follows from the fact that $X_{n+1}$ is a quotient of the unbounded cube complex $D_{n+1}$ and that cubical $2$-neighborhoods in $X_{n+1}$ lift to cubical $2$-neighborhoods in $D_{n+1}$. This confirms that we can apply our thickening process to $X_{n+1}$ (see \Cref{def:state}).
\end{rmk}

\section{Calculating the virtual cohomological dimension}

\subsection{Topology of links, spheres, and balls}\label{subsec:generalities}

In this subsection we establish some useful results on links, spheres, and balls in simplicial complexes. The reader who is primarily interested in the proof of \Cref{prop:cd-spherical-nbhds} can skip over this subsection, referring back to results as needed. We merely observe that the simplicial complexes $T_n$ are $5$-large (see \Cref{lem:flag 5-large}), as are all the links in $T_n$. Therefore all results in this section apply to them.

\medskip

Recall that $\lk(\sigma, \Sigma)$ is an abstract simplicial complex with vertices corresponding to simplices containing $\sigma$ as a codimension one face, and where a collection of vertices spans a simplex in $\lk(\sigma, \Sigma)$ if and only if the corresponding simplices in $\Sigma$ are contained in a common larger simplex in $\Sigma$. This abstract definition holds for general simplicial complexes. In the special case where $\Sigma$ is a flag simplicial complex, the link of a simplex $\sigma \subset \Sigma$ can be identified with the full subcomplex $\lk(\sigma, \Sigma)$ of $\Sigma$ spanned by all vertices at combinatorial distance one from all vertices of $\sigma$. Note that the property of being $5$-large and flag is inherited by full subcomplexes, and thus it is inherited by links.

Similarly, if $\square$ is a cube in the CAT(0) cube complex $X$, we can define the abstract \emph{combinatorial link} of the cube. Vertices are the cubes containing $\square$ as a codimension one face, and a collection of vertices spans a simplex if and only if the corresponding cubes lie in a common larger cube. There is also an alternative geometric interpretation of the link. We can define the \emph{geometric link} of $\square$ by choosing a point $p$ in the interior of $\square$, and looking at the space of unit tangent vectors at $p$ that are orthogonal to the cube $\square$. Thus each cube containing $\square$ contributes a single simplex to the geometric link. The two notions of link are closely related, as they give homeomorphic spaces. The combinatorial link is purely combinatorial, whereas the geometric link can also be endowed with a metric structure under which each simplex is isometric to an all right spherical simplex. We will shift between the geometric link and the combinatorial link as convenient, and denote this common topological space by ${\lk}(\square, X)$.

\begin{lem}\label{lem:links-cube-to-vertex}
    Let $X$ be an arbitrary cube complex, $\square \subset X$ be a cube, and $v\in \square$ be a vertex of the cube. The cube $\square$ defines a simplex $\tau \subset \lk(v,X)$, and there is a combinatorial isomorphism $\lk(\square, X) \cong \lk(\tau, \lk(v,X))$.
\end{lem}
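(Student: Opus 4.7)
The plan is to exhibit an explicit bijection on vertex sets and then verify it preserves simplices in both directions, using only the defining combinatorics of ``link'' for cubes and for simplices.

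First I would identify the simplex $\tau$. Since $v \in \square$, the $\dim(\square)$ edges of $\square$ meeting $v$ each determine a vertex of $\lk(v,X)$, and together, because these edges all sit inside the common cube $\square$, they span a simplex of dimension $\dim(\square)-1$ in $\lk(v,X)$. Call this simplex $\tau$; more generally, every subcube of $\square$ through $v$ corresponds to a face of $\tau$, and this assignment is a bijection between cubes of $\square$ containing $v$ and faces of $\tau$ (including $\tau$ itself, which corresponds to $\square$).

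Next I would define the candidate isomorphism $\Phi \colon \lk(\square,X) \to \lk(\tau, \lk(v,X))$ on vertices. A vertex of $\lk(\square,X)$ is a cube $\square' \supset \square$ with $\dim(\square') = \dim(\square)+1$. Since $v \in \square \subset \square'$, the cube $\square'$ determines a simplex $\tau' \subset \lk(v,X)$ of dimension $\dim(\square')-1 = \dim(\tau)+1$, and $\tau \subset \tau'$ because $\square \subset \square'$; so $\tau'$ is a vertex of $\lk(\tau, \lk(v,X))$. Set $\Phi(\square') = \tau'$. To see $\Phi$ is a bijection on vertices, I would construct the inverse: a vertex of $\lk(\tau, \lk(v,X))$ is a simplex $\tau' \subset \lk(v,X)$ of dimension $\dim(\tau)+1$ containing $\tau$; such a simplex corresponds to a cube $\square'$ of $X$ containing $v$, with $\dim(\square') = \dim(\tau)+2 = \dim(\square)+1$, and the containment $\tau \subset \tau'$ forces $\square \subset \square'$. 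So $\square' \mapsto \tau'$ has a well-defined inverse.

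For simplices, I would use the common-face description of both links. Vertices $\square_1', \dots, \square_m'$ of $\lk(\square, X)$ span a simplex precisely when there is a cube $\square'' \subset X$ containing all the $\square_i'$ (and hence $\square$, with $v$ as a vertex). Applying $\Phi$, the corresponding simplices $\tau_1', \dots, \tau_m' \subset \lk(v,X)$ all lie inside the single simplex of $\lk(v,X)$ that $\square''$ determines, which is exactly the condition for them to span a simplex in $\lk(\tau, \lk(v,X))$. Conversely, if the $\tau_i'$ span a simplex of $\lk(\tau, \lk(v,X))$, they are jointly contained in a single simplex of $\lk(v,X)$ arising from a cube $\square''$ of $X$ through $v$; reversing the correspondence, this $\square''$ contains each $\square_i'$, certifying that the $\square_i'$ span a simplex in $\lk(\square, X)$. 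Thus $\Phi$ is a combinatorial isomorphism.

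The only mildly delicate point — and the one I would double-check carefully — is the bookkeeping that the bijection ``subcube of $X$ through $v$'' $\leftrightarrow$ ``simplex of $\lk(v,X)$'' respects containment in both directions, so that ``$\square \subset \square'$'' in $X$ corresponds precisely to ``$\tau \subset \tau'$'' in $\lk(v,X)$, and similarly for the common supercube $\square''$. This is the standard identification of the link of a vertex in a cube complex with the nerve of the cubes at that vertex, but because the lemma is stated for an arbitrary cube complex with no flagness or nonpositive curvature assumption, the verification must proceed purely from the definitions of abstract combinatorial link rather than from any CAT($0$)-type reasoning.
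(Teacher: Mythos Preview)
Your proposal is correct and follows essentially the same approach as the paper: both arguments set up the bijection on vertices via the correspondence between cubes through $v$ and simplices of $\lk(v,X)$, and then verify that simplices are preserved by passing to a common supercube $\square''$ and its associated simplex $\tau''$. Your version is slightly more explicit about the inverse map and the containment bookkeeping, but the structure of the argument is the same.
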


\begin{figure}[h]
\centering
\begin{tikzpicture}
\newcommand{\scalar}{0.75}
\newcommand{\lrx}{(\scalar)*3} 
\newcommand{\lry}{(\scalar)*0} 
\newcommand{\fbx}{(\scalar)*1.5} 
\newcommand{\fby}{(\scalar)*1.5} 
\newcommand{\udx}{(\scalar)*0}
\newcommand{\udy}{(\scalar)*2.5}
\newcommand{\cubecoords}{ 
    \coordinate (cube1) at (0,0);
    \coordinate (cube2) at ($(cube1)+({\lrx},{\lry})$);
    \coordinate (cube3) at ($(cube2)+({\fbx},{\fby})$);
    \coordinate (cube4) at ($(cube3)-({\lrx},{\lry})$);
    \coordinate (cube5) at ($(cube1)+({\udx},{\udy})$);
    \coordinate (cube6) at ($(cube2)+({\udx},{\udy})$);
    \coordinate (cube7) at ($(cube3)+({\udx},{\udy})$);
    \coordinate (cube8) at ($(cube4)+({\udx},{\udy})$);
    \coordinate (cube9) at ($(cube2)+({\lrx},{\lry})$);
    \coordinate (cube10) at ($(cube3)+({\lrx},{\lry})$);
    \coordinate (cube11) at ($(cube7)+({\lrx},{\lry})$);
    \coordinate (cube12) at ($(cube6)+({\lrx},{\lry})$);
}
\begin{scope}[shift={({-3.5},0)}]
\cubecoords
\coordinate (6mid7) at ($0.6*(cube6)+0.4*(cube7)$);
\coordinate (link1) at ($(6mid7) + ({0.4*\lrx},{0.4*\lry})$);
\coordinate (link2) at ($(6mid7) + ({-0.4*\udx},{-0.4*\udy})$);
\coordinate (link3) at ($(6mid7) + ({-0.4*\lrx},{-0.4*\lry})$);
\draw (cube4) -- (cube3) -- (cube10);
\draw (cube4) -- (cube8);
\draw (cube3) -- (cube7);
\draw[ultra thick, cbred] (link1) -- (link2) -- (link3); 
\draw[line width=4pt,white] (cube5) -- (cube6) -- (cube12);
\draw[line width=4pt, white] (cube2) -- (cube6);
\draw[line width=4pt, white] (cube9) -- (cube12);
\draw (cube1) -- (cube5);
\draw (cube2) -- (cube6);
\draw (cube9) -- (cube12);
\draw (cube10) -- (cube11);
\draw (cube1) -- (cube2) -- (cube9);
\draw (cube5) -- (cube6) -- (cube12);
\draw (cube8) -- (cube7) -- (cube11);
\draw (cube1) -- (cube4);
\draw (cube2) -- (cube3);
\draw (cube5) -- (cube8);
\draw (cube6) -- (cube7);
\draw (cube9) -- (cube10);
\draw (cube12) -- (cube11);
\foreach \pt in {link1,link2,link3}{
    \draw[thick,cbred,fill=cbred] (\pt) circle[radius=1.75pt];
}
\draw (link2) node[rectangle, color=white, fill, fill opacity=0.6, inner sep=0.1em, outer sep=0.16em, text=cbred, text opacity=1, anchor=north west] {{\footnotesize $\lk(\square,X)$}};
\draw[ultra thick, cbblue] (cube6) -- (cube7);
\draw[thick,cbblue,fill=cbblue] (cube6) circle[radius=2pt];
\draw[thick,cbblue,fill=cbblue] (cube7) circle[radius=2pt];
\draw (cube7) node[text=cbblue,anchor=south] {$\square$};
\end{scope}
\begin{scope}[shift={(3.5,0)}]
\cubecoords
\coordinate (6mid2) at ($0.6*(cube6)+0.4*(cube2)$);
\coordinate (6mid5) at ($0.6*(cube6)+0.4*(cube5)$);
\coordinate (6mid7) at ($0.6*(cube6)+0.4*(cube7)$);
\coordinate (6mid12) at ($0.6*(cube6)+0.4*(cube12)$);
\draw (cube4) -- (cube3) -- (cube10);
\draw (cube4) -- (cube8);
\draw (cube3) -- (cube7);
\draw[cbblue] (6mid2) -- (6mid7); 
\draw[line width=4pt,white] (cube5) -- (cube6) -- (cube12);
\draw[line width=4pt, white] (cube2) -- (cube6);
\draw[line width=4pt, white] (cube9) -- (cube12);
\draw (cube1) -- (cube5);
\draw (cube2) -- (cube6);
\draw (cube9) -- (cube12);
\draw (cube10) -- (cube11);
\draw (cube1) -- (cube2) -- (cube9);
\draw (cube5) -- (cube6) -- (cube12);
\draw (cube8) -- (cube7) -- (cube11);
\draw (cube1) -- (cube4);
\draw (cube2) -- (cube3);
\draw (cube5) -- (cube8);
\draw (cube6) -- (cube7);
\draw (cube9) -- (cube10);
\draw (cube12) -- (cube11);
\draw[ultra thick, black] (cube6) -- (cube7);
\draw[thick,black,fill=black] (cube7) circle[radius=2pt];
\draw[thick,cbred,fill=cbred] (6mid7) circle[radius=2pt] node[text=cbred, anchor=south] {{\footnotesize $\tau$}};
\draw[cbblue, fill=cbblue, fill opacity=0.25] (6mid2) -- (6mid5) -- (6mid7) -- (6mid12) -- cycle;
\foreach \pt in {6mid2,6mid5,6mid12}{
         \draw[thick,cbred,fill=cbred] (\pt) circle[radius=1.75pt];
}
\draw[ultra thick, cbred] (6mid5) -- (6mid2) -- (6mid12);
\draw[ultra thick,cbblue,fill=cbblue] (cube6) circle[radius=2pt];
\draw (cube6) node[circle, outer sep=0.65em, text=cbblue, anchor=south east] {{\footnotesize $v$}};
\draw (6mid2) node[rectangle, color=white, fill, fill opacity=0.6, inner sep=0.1em, outer sep=0.16em, text=cbred, text opacity=1, anchor=north west] {{\footnotesize $\lk(\tau,\lk(v,X))$}};
\end{scope}
\end{tikzpicture}
\caption{The combinatorial isomorphism in \Cref{lem:links-cube-to-vertex}.}
\end{figure}
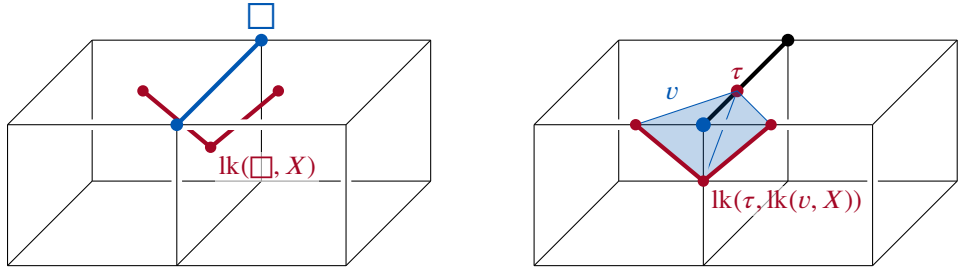

\begin{proof}
Recall that vertices in $\lk(\square, X)$ are given by cubes $\square^\prime$ satisfying $\square \subset \square^\prime$ for which $\dim(\square^\prime)=\dim(\square)+1.$ Considering the simplex $\tau \in \lk(v, X)$ corresponding to $\square$, we see that each such cube $\square^\prime$ gives rise to a simplex $\tau^\prime \subset \lk(v,X)$ containing the simplex $\tau$ as a codimension one face. In other words, this defines a vertex in $\lk(\tau, \lk(v, X))$. Conversely, a simplex $\tau^\prime$ in $\lk(v, X)$ containing $\tau$ as a codimension one face arises from a cube $\square^\prime$ containing $\square$ as a codimension one face. This establishes a bijection between the vertices of the two simplicial complexes $\lk(\square, X)$ and $\lk(\tau, \lk(v,X))$.
    
Finally, we check that the actual simplicial complexes coincide. For the simplicial complex $\lk(\square, X)$, the set of vertices $\{\square^\prime_0, \ldots , \square^\prime_k \}$ span a $k$-simplex if and only if the cubes $\{\square^\prime_0, \ldots ,\square^\prime_k\}$ are all contained in common larger cube $\square^{\prime\prime}$. Considering $\lk(v, X)$, this collection of cubes correspond to a collection of simplices $\{\tau^\prime_0, \ldots , \tau^\prime_k \}$ in $\lk(v,X)$ all containing $\tau$ as a codimension one face. The larger cube $\square^{\prime\prime}$ leads to a larger simplex $\tau^{\prime\prime} \subset \lk(v, X)$ containing all of the $\tau^\prime_i$ as faces. Viewing the $\tau^\prime_i$ as a collection of vertices in $\lk(\tau, \lk(v,X))$, it follows that they span a simplex in $\lk(\tau, \lk(v,X))$. This shows that simplices in $\lk(\square, X)$ give rise to simplices in $\lk(\tau, \lk(v,X))$. One can reverse this argument to show the converse. Thus the bijection on vertex sets described above also bijectively identifies simplices of the two complexes, establishing that the two simplicial complexes are isomorphic.
\end{proof}

An identical argument can be used to show the following result.

\begin{lem}\label{lem:links-simpclx-to-vertex}
    Let $Y$ be a simplicial complex, $\sigma \subset Y$ be a simplex, and $\tau \subset \sigma$ be a face of the simplex. Then $\sigma$ defines a simplex $\hat \sigma \subset \lk(\tau, Y)$, and there is a combinatorial isomorphism between $\lk(\sigma, Y)$ and $\lk(\hat \sigma, \lk(\tau, Y))$. In particular, $\lk(\sigma, Y) \cong \lk(\hat \sigma, \lk(v, Y))$ for any vertex $v$ of $\sigma$.
\end{lem}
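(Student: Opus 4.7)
The plan is to mirror the proof of Lemma \ref{lem:links-cube-to-vertex} essentially verbatim, substituting simplices for cubes throughout. First I would explicitly define $\hat\sigma$: write $\sigma = \tau \cup \{v_1, \ldots, v_s\}$ with the $v_i \notin \tau$, so that each $\tau \cup \{v_i\}$ is a face of $\sigma$ containing $\tau$ as a codimension-one face. Thus each $v_i$ determines a vertex of $\lk(\tau, Y)$, and these $s$ vertices span a simplex $\hat\sigma \subset \lk(\tau, Y)$ of dimension $\dim \sigma - \dim \tau - 1$, because $\tau \cup \{v_1, \ldots, v_s\} = \sigma$ is already a simplex of $Y$.

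Next I would establish a bijection on vertex sets. The vertices of $\lk(\sigma, Y)$ correspond bijectively to vertices $w \notin \sigma$ for which $\sigma \cup \{w\}$ is a simplex of $Y$. On the other hand, a vertex of $\lk(\hat\sigma, \lk(\tau, Y))$ is by definition a vertex $w$ of $\lk(\tau, Y)$ lying outside $\hat\sigma$ such that $\hat\sigma \cup \{w\}$ is a simplex of $\lk(\tau, Y)$. Unwinding the definition of $\lk(\tau, Y)$, this asserts $w \notin \tau$, $w \notin \hat\sigma$ (so $w \notin \sigma$), and $\tau \cup \hat\sigma \cup \{w\} = \sigma \cup \{w\}$ is a simplex of $Y$. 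This matches the first description exactly.

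For higher-dimensional simplices, the same unwinding yields a matching criterion: a collection of vertices $\{w_0, \ldots, w_d\}$ spans a simplex of $\lk(\sigma, Y)$ iff $\sigma \cup \{w_0, \ldots, w_d\}$ is a simplex of $Y$, and spans a simplex of $\lk(\hat\sigma, \lk(\tau, Y))$ iff $\hat\sigma \cup \{w_0, \ldots, w_d\}$ is a simplex of $\lk(\tau, Y)$, which in turn reduces to the same condition. Hence the vertex bijection extends to a combinatorial isomorphism of simplicial complexes. The ``in particular'' statement is immediate by specializing to $\tau = \{v\}$, so that $\hat\sigma$ is the face of $\sigma$ opposite $v$, viewed as a simplex of $\lk(v, Y)$. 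There is no real obstacle here; the entire argument is a bookkeeping exercise, and the only care needed is to identify vertices of the iterated link $\lk(\hat\sigma, \lk(\tau, Y))$ correctly with vertices of $Y$ outside $\sigma$.
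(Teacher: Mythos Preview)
Your proposal is correct and is exactly what the paper does: the paper's entire proof is the single sentence ``An identical argument can be used to show the following result,'' referring back to the proof of Lemma~\ref{lem:links-cube-to-vertex}, and your write-up simply carries out that identical argument with simplices in place of cubes. Your identification of vertices of $\lk(\sigma,Y)$ with vertices $w\notin\sigma$ for which $\sigma\cup\{w\}$ is a simplex is equivalent to the paper's ``simplices containing $\sigma$ as a codimension-one face'' description, so the bookkeeping matches.
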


Next we turn our attention to the topology of the spherical sets $S(\sigma, \Sigma)$. Recall that this set is a small $\epsilon$-sphere centered around the simplex $\sigma$, and it inherits a natural polyhedral structure from the ambient simplicial complex $\Sigma$. Indeed, each point $p\in S(\sigma, \Sigma)$ is contained in the interior of a unique simplex in $\Sigma$ that splits as a join $\sigma_p * \tau_p$ where $\sigma_p\subset \sigma$ is a face of $\sigma$ and $\tau_p$ is a simplex disjoint from $\sigma$. The intersection of $S(\sigma, \Sigma)$ with the simplex $\sigma_p * \tau_p$ is a copy of the polyhedron $\sigma_p \times \tau_p$, which we can think of as the ``$\epsilon$-slice'' $\sigma_p \times \tau_p \times \{\epsilon\}$
in the join:
    \[ \sigma_p * \tau_p = (\sigma_p \times \tau_p \times [0,1])/\sim \]
As we range over all simplices incident to $\sigma$, this yields the polyhedral decomposition of $S(\sigma, \Sigma)$. 

It will be more convenient for our purposes to replace the small $\epsilon$-sphere centered at $\sigma$ by the combinatorial $1$-sphere centered at $\sigma$. This is the full subcomplex $\hat S(\sigma, \Sigma)$ spanned by all vertices of $\Sigma$ that are at combinatorial distance one from $\sigma$, i.e., that are not in $\sigma$ but are connected by an edge to a vertex in $\sigma$.

\begin{lem}\label{lem:describe-combinatorial-sphere}
    Assume $\Sigma$ is $5$-large. Then in terms of the description of $S(\sigma, \Sigma)$ in terms of joins, $\hat S(\sigma, \Sigma)$ is the union of all the simplices $\tau_p$. 
\end{lem}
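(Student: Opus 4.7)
The plan is to prove the asserted equality of the two subcomplexes of $\Sigma$ by establishing both inclusions. The inclusion $\bigcup_p \tau_p \subseteq \hat S(\sigma, \Sigma)$ is essentially immediate from the definitions: whenever $\sigma_p * \tau_p$ is a simplex of $\Sigma$ with $\sigma_p \subset \sigma$ a nonempty face, every vertex of $\tau_p$ is joined by an edge to every vertex of $\sigma_p$ and hence has a neighbor in $\sigma$ while not itself belonging to $\sigma$. Since $\hat S(\sigma, \Sigma)$ is a full subcomplex, the simplex $\tau_p$ is contained in $\hat S(\sigma, \Sigma)$.

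For the reverse inclusion $\hat S(\sigma, \Sigma) \subseteq \bigcup_p \tau_p$, I first reduce to a purely combinatorial statement. Since $\Sigma$ is flag, it suffices to show that for every simplex $\tau$ of $\Sigma$ disjoint from $\sigma$ and with every vertex adjacent to some vertex of $\sigma$, there exists a single vertex $w \in \sigma$ adjacent to every vertex of $\tau$: flagness then makes $\{w\} \cup \tau$ a simplex of $\Sigma$, so $\tau$ appears as the second factor in the decomposition $\{w\} * \tau$ and is therefore realized as some $\tau_p$. I would establish the existence of such a $w$ by induction on $k = |\tau|$. The base case $k = 1$ is exactly the defining condition for membership in $\hat S(\sigma, \Sigma)$. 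For the inductive step with $\tau = \{v_1, \ldots, v_k\}$ and $k \geq 2$, I apply the inductive hypothesis to each $(k-1)$-face $\tau \setminus \{v_i\}$ to obtain $w_i \in \sigma$ adjacent to every vertex of $\tau \setminus \{v_i\}$. If $w_i = w_j$ for some $i \neq j$, then this common vertex is adjacent to every vertex of $\tau$ and we are done. Otherwise, the four vertices $w_i, w_j, v_i, v_j$ are pairwise distinct (since $\sigma$ and $\tau$ are disjoint and we are in the case $w_i \neq w_j$) and span an embedded $4$-cycle $w_i$-$w_j$-$v_i$-$v_j$-$w_i$, whose edges arise respectively from $\sigma$ being a simplex ($w_i$-$w_j$), $\tau$ being a simplex ($v_i$-$v_j$), and the adjacency properties coming from the induction ($w_j$-$v_i$ and $w_i$-$v_j$). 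The flag-no-square condition supplied by $5$-largeness forces one of the diagonals $w_i$-$v_i$ or $w_j$-$v_j$ to be an edge of $\Sigma$, and whichever vertex among $w_i, w_j$ this produces is then adjacent to every vertex of $\tau$, completing the induction.

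I expect the main obstacle to be the careful verification of the inductive step: one must split off the ``collision'' case $w_i = w_j$ to guarantee that the $4$-cycle one wishes to exploit is genuinely embedded, since the $5$-large hypothesis only applies to embedded $4$-cycles. The flag-no-square property of $\Sigma$ is used in an essential way at precisely this step; without it, the desired vertex $w \in \sigma$ adjacent to every vertex of $\tau$ need not exist in general.
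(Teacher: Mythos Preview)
Your proposal is correct and follows essentially the same approach as the paper: induction on the size of $\tau$, with the inductive step handled by producing an embedded $4$-cycle whose missing diagonal is supplied by $5$-largeness. The only organisational difference is that the paper writes $\tau = v * \tau'$, applies the inductive hypothesis once to $\tau'$ to obtain a single $w'\in\sigma$, and then, if $v\not\sim w'$, picks any $v'\in\sigma$ adjacent to $v$ and runs the $4$-cycle argument once for each vertex of $\tau'$ to show $v'$ works; you instead apply the inductive hypothesis to two facets and use a single $4$-cycle. Both variants are the same induction-plus-flag-no-square argument.
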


\begin{proof}
Clearly the union of the simplices $\tau_p$ is contained in $\hat S(\sigma, \Sigma)$. To show the converse, we need to argue that for any simplex $\tau \subset \hat S(\sigma, \Sigma)$, we can find a vertex $w \in \sigma$ with the property that $\tau*w$ is a simplex of $\Sigma$. This is done via induction on $\dim(\tau)$, with the base case $\dim(\tau)=0$ being immediate.

Now given $\tau$, express it as a join $v * \tau^\prime$ where $v\in \hat S(\sigma, \Sigma)$ is a vertex and $\tau^\prime \subset \hat S(\sigma, \Sigma)$ is a codimension one face. By the inductive hypothesis, there exists a vertex $w^\prime \subset \sigma$ with the property that $w^\prime * \tau^\prime$ is a simplex in $\Sigma$. If $v$ is connected to $w^\prime$, then $\tau*w^\prime$ is a simplex since $\Sigma$ is flag, and we are done. If $v$ is not connected to $w^\prime$, then $v$ is connected to some vertex $v^\prime\in \sigma$, with $v^\prime\neq w^\prime$. For each vertex $w\in \tau^\prime$, consider the ordered $4$-tuple $\{v, v^\prime, w^\prime, w\}$. This defines a square in $\Sigma$, and we know by construction that $v$ is not connected to $w^\prime$. Then $5$-largeness forces $w$ to be connected to $v^\prime$. Since this holds for arbitrary $w \in \tau^\prime$, we conclude that all the vertices of $\tau^\prime$ are connected to $v^\prime$. Again, the fact that $\Sigma$ is flag implies that $\tau * v^\prime$ is a simplex, thus completing the proof.
\end{proof}

We also introduce the notation $\hat B(\sigma, \Sigma)$, to denote the combinatorial $1$-ball centered at $\sigma$. This is the union of all simplices that have nontrivial intersection with $\sigma$. Note that, if $\Sigma$ is $5$-large, \Cref{lem:describe-combinatorial-sphere} implies that $\hat B(\sigma, \Sigma)$ is a full subcomplex of $\Sigma$. The vertices of $\hat B(\sigma, \Sigma)$ naturally partition into vertices in $\sigma$ and vertices in $\hat S(\sigma, \Sigma)$. Our next lemma describes the topology of the combinatorial $1$-sphere $\hat S(\sigma, \Sigma)$ and the combinatorial $1$-ball $\hat B(\sigma, \Sigma)$.

\begin{figure}[h]
\centering
\begin{tikzpicture}
\newcommand{\BsigmaT}{
    \coordinate (p1) at (0,1);
    \coordinate (p2) at (-1,0);
    \coordinate (p3) at (1,0);
    \coordinate (p4) at ($(p1) + (1.75,0)$);
    \coordinate (p5) at ($(p1) + (0.5,1.5)$);
    \coordinate (p6a) at ($(p1) + (-1.75,0.25)$);
    \coordinate (p6b) at ($(p1) + (-0.75,1.25)$);
    \coordinate (p7) at ($(p2) + (-1,-0.5)$);
    \coordinate (p8) at ($(p2) + (0.75,-1.5)$);
    \coordinate (p9) at ($(p3) + (0,-1)$);
    \coordinate (p10) at ($(p3) + (0.75,-1.25)$);
    \draw[black, cap=round] (p3) -- (p5); \draw[line width=4pt, white, cap=round] (p1)--(p4); \draw[black, cap=round] (p1) -- (p4);
    \draw[black, cap=round] (p1) -- (p2);
    \draw[black, cap=round] (p1) -- (p3);
    \draw[black, cap=round] (p1) -- (p5);
    \draw[black, cap=round] (p1) -- (p6a);
    \draw[black, cap=round] (p1) -- (p6b);
    \draw[black, cap=round] (p2) -- (p3);
    \draw[black, cap=round] (p2) -- (p6a);
    \draw[black, cap=round] (p2) -- (p6a);
    \draw[black, cap=round] (p2) -- (p7);
    \draw[black, cap=round] (p2) -- (p8);
    \draw[black, cap=round] (p3) -- (p8);
    \draw[black, cap=round] (p3) -- (p4);
    \draw[black, cap=round] (p3) -- (p9);
    \draw[black, cap=round] (p3) -- (p10);
    \draw[black, cap=round] (p4) -- (p5);
    \draw[black, cap=round] (p4) -- (p10);
    \draw[black, cap=round] (p5) -- (p6b);
    \draw[black, cap=round] (p6a) -- (p6b);
    \draw[black, cap=round] (p6a) -- (p7);
    \draw[black, cap=round] (p7) -- (p8);
    \draw[black, cap=round] (p8) -- (p9);
    \draw[black, cap=round] (p8) -- (p10);
    \draw[black, cap=round] (p9) -- (p10);
    \draw[ultra thick, black, cap=round] (p1) -- (p2) -- (p3) -- cycle;
    \foreach \pt in {p1,p2,p3}{
        \draw[thick,black,fill=black] (\pt) circle[radius=1.75pt];
    }
    \draw ($0.33*(p1)+0.33*(p2)+0.33*(p3)$) node[text=black] {{\footnotesize $\sigma$}};
    }
\begin{scope}[shift={(-5,0)}]
\BsigmaT
\draw (p5) node[text=black, anchor=south east] {{\footnotesize $\hat{B}(\sigma,\Sigma)$}};
\end{scope}
\begin{scope}[shift={(0,0)}]
\BsigmaT
\newcommand{\eps}{0.4}
\coordinate (e14) at (${1-\eps}*(p1) + {\eps}*(p4)$);
\coordinate (e15) at (${1-\eps}*(p1) + {\eps}*(p5)$);
\coordinate (e16a) at (${\eps}*(p1) + {1-\eps}*(p6a)$);
\coordinate (e16b) at (${\eps}*(p1) + {1-\eps}*(p6b)$);
\coordinate (e26a) at (${\eps}*(p2) + {1-\eps}*(p6a)$);
\coordinate (e27) at (${\eps}*(p2) + {1-\eps}*(p7)$);
\coordinate (e28) at (${\eps}*(p2) + {1-\eps}*(p8)$);
\coordinate (e34) at (${\eps}*(p3) + {1-\eps}*(p4)$);
\coordinate (e35) at ($0.4*(p3) + 0.6*(p5)$);
\coordinate (e38) at (${\eps}*(p3) + {1-\eps}*(p8)$);
\coordinate (e39) at (${\eps}*(p3) + {1-\eps}*(p9)$);
\coordinate (e310) at (${\eps}*(p3) + {1-\eps}*(p10)$);
\draw[very thick, cbred, cap=round] (e15) -- (e16b);
\draw[very thick, cbred, cap=round] (e16b) -- (e16a);
\draw[very thick, cbred, cap=round] (e16a) -- (e26a);
\draw[very thick, cbred, cap=round] (e26a) -- (e27);
\draw[very thick, cbred, cap=round] (e27) -- (e28);
\draw[very thick, cbred, cap=round] (e28) -- (e38);
\fill[fill=cbred, fill opacity=0.25] (e38) -- (e39) -- (e310) -- cycle;
\draw[very thick, cbred, cap=round] (e38) -- (e39) -- (e310);
\draw[very thick, cbred, cap=round] (e38) -- (e310);
\draw[very thick, cbred, cap=round] (e310) -- (e34);
\fill[fill=cbred, fill opacity=0.25] (e34) -- (e35) -- (e15) -- (e14) -- cycle;
\draw[very thick, cbred, cap=round] (e34) -- (e35) -- (e15) -- (e14);
\draw[very thick, cbred, cap=round] (e34) -- (e14);
\foreach \pt in {e14,e15,e16b,e16a,e26a,e27,e28,e34,e35,e38,e39,e310}{
    \draw[thick, cbred, fill=cbred] (\pt) circle[radius=1.75pt];
}
\draw (p5) node[text=cbred, anchor=south east] {{\footnotesize $S(\sigma,\Sigma)$}};
\end{scope}
\begin{scope}[shift={(5,0)}]
\BsigmaT
\draw[ultra thick, cbred, cap=round] (p4) -- (p5) -- (p6b) -- (p6a) -- (p7) -- (p8) -- (p10) -- cycle;
\fill[fill=cbred, fill opacity=0.25] (p8) -- (p9) -- (p10) -- cycle;
\draw[ultra thick, cbred, cap=round] (p8) -- (p9) -- (p10);
\foreach \pt in {p4,p5,p6b,p6a,p7,p8,p9,p10}{
    \draw[thick, cbred, fill=cbred] (\pt) circle[radius=1.75pt];
}
\draw (p5) node[text=cbred, anchor=south east] {{\footnotesize $\hat{S}(\sigma,\Sigma)$}};
\end{scope}
\end{tikzpicture}
\captionof{figure}{The subsets described in \Cref{lem:metric-to-combinatorial-sphere}.}
\end{figure}
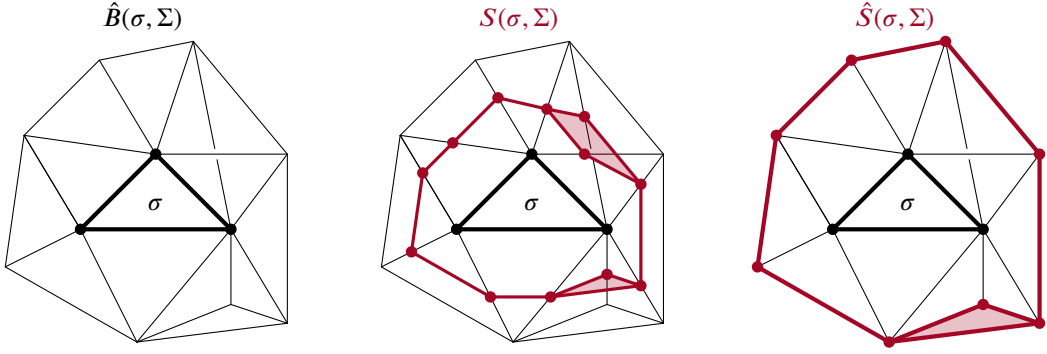

\begin{lem}\label{lem:metric-to-combinatorial-sphere}
    Assume $\Sigma$ is $5$-large, and let $\sigma \subset \Sigma$ be an arbitrary simplex. Then
    \begin{enumerate}
        \item $\hat S(\sigma, \Sigma)$ and $S(\sigma, \Sigma)$ are homotopy equivalent, and
        \item $\hat B(\sigma, \Sigma)$ is contractible.
    \end{enumerate}
\end{lem}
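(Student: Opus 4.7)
The plan is to prove (2) first via the Nerve Lemma, and then to deduce (1) from an explicit deformation retraction combined with a comparison map.

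For (2), observe that $\hat B(\sigma,\Sigma)=\bigcup_{v\in V(\sigma)}\overline{\operatorname{St}}(v,\Sigma)$, since a simplex meets $\sigma$ if and only if it contains a vertex of $\sigma$. Each closed star $\overline{\operatorname{St}}(v,\Sigma)=v\ast\lk(v,\Sigma)$ is a contractible cone. For any nonempty $T\subseteq V(\sigma)$, the intersection $\bigcap_{v\in T}\overline{\operatorname{St}}(v,\Sigma)$ equals $\overline{\operatorname{St}}(\Delta_T,\Sigma)=\Delta_T\ast\lk(\Delta_T,\Sigma)$, where $\Delta_T\subseteq\sigma$ is the face spanned by $T$; this is again a contractible cone. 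Since the nerve of this cover by subcomplexes is the full simplex on $V(\sigma)$, itself contractible, the Nerve Lemma for good closed subcomplex covers yields that $\hat B(\sigma,\Sigma)$ is contractible.

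For (1), I plan to show both $\hat S(\sigma,\Sigma)$ and $S(\sigma,\Sigma)$ are homotopy equivalent to $\hat B(\sigma,\Sigma)\setminus\sigma$. The equivalence $\hat S(\sigma,\Sigma)\simeq\hat B(\sigma,\Sigma)\setminus\sigma$ comes from an explicit deformation retraction: any $p\in\hat B(\sigma,\Sigma)\setminus\sigma$ lies in the relative interior of a unique minimal simplex $\rho_p\subset\Sigma$, which by \Cref{lem:describe-combinatorial-sphere} decomposes as $\rho_p=\sigma_{\rho_p}\ast\tau_{\rho_p}$ with $\tau_{\rho_p}$ nonempty. Writing $p=(1-s)a+sb$ with $a$ in the $\sigma$-face, $b$ in the $\tau$-face, and $s\in(0,1]$, the homotopy $H_t(p)=(1-s_t)a+s_t b$ with $s_t=(1-t)s+t$ retracts $\hat B(\sigma,\Sigma)\setminus\sigma$ onto $\hat S(\sigma,\Sigma)$. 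Continuity at points where $s=1$ (so that $\sigma_{\rho_p}$ is empty and $a$ is undefined) is automatic, since the coefficient $(1-s_t)=(1-t)(1-s)$ vanishes there. For the second equivalence, $S(\sigma,\Sigma)\simeq\hat S(\sigma,\Sigma)$, consider the cellular projection $\phi\colon S(\sigma,\Sigma)\to\hat S(\sigma,\Sigma)$ defined on each slice $\sigma_\rho\times\tau_\rho$ by $(a,b)\mapsto b$. The preimage of a point $b$ whose minimal simplex in $\hat S(\sigma,\Sigma)$ is $\tau_b$ equals the simplex $N_\sigma(\tau_b):=\{v\in V(\sigma):\{v\}\cup V(\tau_b)\text{ spans a simplex of }\Sigma\}$, which is nonempty by \Cref{lem:describe-combinatorial-sphere} and hence contractible. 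Standard results for cellular maps with contractible point-fibers between CW complexes (via Vietoris--Begle plus the CW hypothesis, or Quillen Theorem A) then give that $\phi$ is a homotopy equivalence, completing the argument.

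The main technical obstacle is establishing $S(\sigma,\Sigma)\simeq\hat S(\sigma,\Sigma)$ via $\phi$ from fiber contractibility alone. Unlike $\hat S(\sigma,\Sigma)$, which admits the direct deformation retraction above, there is no canonical simplex-wise deformation retraction of $\hat B(\sigma,\Sigma)\setminus\sigma$ onto $S(\sigma,\Sigma)$: such a retraction would need to send points of $\hat S(\sigma,\Sigma)$ (at metric distance $\pi/2$ from $\sigma$) back to $S(\sigma,\Sigma)$ (near $\sigma$) via a continuous choice of $\sigma$-vertex adjacent to each simplex of $\hat S(\sigma,\Sigma)$. The 5-largeness of $\Sigma$ (through \Cref{lem:describe-combinatorial-sphere}) guarantees that $N_\sigma(\tau)$ is always nonempty but does not canonically select a vertex, so an appeal to an abstract homotopy equivalence criterion appears unavoidable.
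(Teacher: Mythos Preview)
Your argument is correct. For (2), your Nerve Lemma approach is genuinely different from the paper's: the paper instead builds a cell-like quotient map from the metric $\epsilon$-ball $B(\sigma,\Sigma)$ (which visibly deformation retracts to $\sigma$) onto $\hat B(\sigma,\Sigma)$ and invokes Lacher's theorem that cell-like maps between compact ANRs are homotopy equivalences. Your route is more elementary and more self-contained; the one point worth making explicit is that the identity $\bigcap_{v\in T}\overline{\operatorname{St}}(v,\Sigma)=\overline{\operatorname{St}}(\Delta_T,\Sigma)$ uses that $\Sigma$ is flag (so that $\rho\ast v$ a simplex for each $v\in T$ forces $\rho\ast\Delta_T$ to be a simplex). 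In fact your argument shows (2) for any flag $\Sigma$, not just $5$-large ones.

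For (1), despite your announced plan of routing through $\hat B(\sigma,\Sigma)\setminus\sigma$, your decisive step is the same as the paper's: the projection $\phi\colon S(\sigma,\Sigma)\to\hat S(\sigma,\Sigma)$ with contractible point-fibers $N_\sigma(\tau_b)$ is exactly the paper's map $\rho$, and both arguments conclude by appealing to an abstract ``contractible fibers $\Rightarrow$ homotopy equivalence'' principle. Your deformation retraction of $\hat B(\sigma,\Sigma)\setminus\sigma$ onto $\hat S(\sigma,\Sigma)$ is correct but never used once $\phi$ is in hand. One citation caution: Vietoris--Begle yields only (co)homology isomorphisms, and Quillen's Theorem~A is a statement about functors between categories, so neither directly certifies that $\phi$ is a homotopy equivalence; the paper's appeal to Lacher's cell-like map theorem for compact ANRs is the cleanest justification here.
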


\begin{proof}
We first consider the topology of $\hat S(\sigma, \Sigma)$. There is a natural map $\rho \colon S(\sigma, \Sigma) \rightarrow \hat S(\sigma, \Sigma)$, given by projecting away from the simplex $\sigma$. More explicitly, on each simplex
    \[ \sigma_p*\tau_p = (\sigma_p \times \tau_p \times [0,1])/\sim \]
incident to $\sigma$, the intersection with $S(\sigma, \Sigma)$ is the polyhedral slice $\sigma_p \times \tau_p \times \{\epsilon\}$, while the intersection with $\hat S(\sigma, \Sigma)$ is the top slice $\tau_p \cong (\sigma_p \times \tau_p \times \{1\})/\sim$. The map $\rho$ is then just the projection to $\{1\}$ in the interval coordinate. In terms of the polyhedral structure on $\hat S(\sigma, \Sigma)$, $\rho$ collapses the $\sigma_p$ factor in each polyhedron $\sigma_p \times \tau_p$. Note that $\rho$ is surjective by \Cref{lem:describe-combinatorial-sphere}. Thus the intersection of the preimage under $\rho$ of any point with any given polyhedron of $S(\sigma, \Sigma)$ is contractible or empty.

    
Now let $x\in \hat S(\sigma, \Sigma)$ be an arbitrary point, and let us consider $\rho^{-1}(x)$. There is a unique simplex $\tau_x \subset \hat S(\sigma, \Sigma)$ containing $x$ in its interior. \Cref{lem:describe-combinatorial-sphere} implies that there is a maximal face $\sigma_x \subset \sigma$ associated with $\tau_x$ such $\sigma_x * \tau_x$ is a simplex of $\Sigma$, and $\sigma_x$ is unique since $\Sigma$ is flag. Then from the discussion in the previous paragraph, $\rho^{-1}(x)$ is a copy of $\sigma_x \times \{x\}$ inside the slice $\sigma_x \times \tau_x$, hence it is contractible. A continuous map with contractible point preimages is a cell-like map, and it is well known that a cell-like map between compact spaces homeomorphic to retracts of open subsets of $\mathbb{R}^n$ is a homotopy equivalence (see \cite[Cor.\ 1.3]{Lacher}). This proves that $\rho$ is a homotopy equivalence, as desired.

To verify the corresponding statement for $\hat B(\sigma, \Sigma)$, we note that $\hat B(\sigma, \Sigma)$ is the union of simplices of the form $\sigma_p * \tau_p$, where $\tau_p$ is a face in $\hat S(\sigma, \Sigma)$ and $\sigma_p$ is a face of $\sigma$ by \Cref{lem:describe-combinatorial-sphere}. We now form an auxiliary space $B(\sigma, \Sigma)$ and a continuous quotient map $\phi \colon B(\sigma, \Sigma) \rightarrow \hat B(\sigma, \Sigma)$. Recall that the space $S(\sigma, \Sigma)$ is the $\epsilon$-sphere centered at $\sigma$, identified with the $\epsilon$-slice $\sigma_p \times \tau_p \times \{\epsilon\}$ in the join description of each simplex incident to $\sigma$. We let $B(\sigma, \Sigma)$ denote the closed $\epsilon$-ball centered at $\sigma$. Thus $B(\sigma, \Sigma)$ gets identified with $(\sigma_p \times \tau_p \times [0, \epsilon])/\sim$ in the join description of each simplex incident to $\sigma$; recall that at the $0$-level we collapse the $\tau_p$ coordinates to points. Since we can contract the interval factors down, we clearly have that $B(\sigma, \Sigma)$ deformation retracts to $\sigma$, hence it is contractible. 

The space $B(\sigma, \Sigma)$ contains a copy of $S(\sigma, \Sigma)$ at level $\epsilon$, and we now define $\phi$ to be the quotient map $\rho$ on the subset $S(\sigma, \Sigma)$. As we can stretch out the $[0, \epsilon]$ coordinate to $[0,1]$, we see that the quotient can naturally be identified with $\hat B(\sigma, \Sigma)$. Moreover point preimages under $\phi$ are either points when $x\notin \hat S(\sigma, \Sigma)$ or coincide with the point preimage under $\rho$ when $x\in \hat S(\sigma, \Sigma)$, and hence point preimages are always contractible. Thus the map $\phi$ is cell-like, and hence it is a homotopy equivalence. We conclude that the space $\hat B(\sigma, \Sigma)$ is contractible, as desired.
\end{proof}

\begin{rmk}
An alternative proof is possible for the second statement of \Cref{lem:metric-to-combinatorial-sphere} by noting that the maximal dimensional faces on $\hat S(\sigma, \Sigma)$ are free faces in the complex $\hat B(\sigma, \Sigma)$ by \Cref{lem:describe-combinatorial-sphere}. Thus one can perform simplicial collapses, reducing the maximal dimension of faces in $\hat S(\sigma, \Sigma)$. After finitely many steps, this simplicially collapses $\hat B(\sigma, \Sigma)$ to the complex $\sigma$. 
\end{rmk}

\subsection{Analysis of combinatorial spheres in thickenings}\label{subsec:thickenings}

We now return to analyzing the thickenings $T_n$ from \Cref{sec:MainPf} in order to prove \Cref{prop:cd-spherical-nbhds}. Adopting all notation from that section, recall that the goal is to prove that $\cd(S(\sigma, T_n))\leq n-1$ for all nonempty simplices $\sigma$ of $T_n$. Equivalently, we want to show that $\overline{H}{}^i(S(\sigma, T_n))=0$ for all $i\geq n$ and $\emptyset \neq \sigma \subset T_n$.
    
As a first step, let us describe the topology of links in a general thickening. For any simplex $\sigma \subset \Th_\alpha(X)$, by definition of the $\alpha$-thickening we have that $\alpha(V(\sigma))$ is contained in some cube of $X$. Consider a minimal such cube under containment. In a general cube complex $X$ this minimal cube might not be unique. However we have the following:

\begin{lem}\label{lem:min-cube-unique}
    Let $X$ be a $5$-large cube complex, $\Th_\alpha(X)$ an arbitrary thickening, and $\sigma \subset \Th_\alpha(X)$ a nonempty simplex. Then there exists a unique minimal cube $\square_\sigma \subset X$ containing the vertices $\alpha(V(\sigma))$.
\end{lem}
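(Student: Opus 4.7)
My plan is to reduce uniqueness in $X$ to uniqueness in the universal cover $\tilde X$, exploiting the fact that $\tilde X$ is a CAT($0$) cube complex in which finite sets of vertices contained in some cube have a canonical smallest enclosing cube.

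First I would recall that since $X$ is $5$-large, the universal cover $\tilde X$ is a simply connected locally CAT($0$) cube complex, hence CAT($0$) (as noted after the definition of $5$-large in \Cref{sec:alpha-thickening}). In any CAT($0$) cube complex, the intersection of two cubes is a (possibly empty) common face of each; consequently, if a finite vertex set $W$ is contained in two cubes $\tilde\square_1,\tilde\square_2 \subset \tilde X$, then $W \subset \tilde\square_1 \cap \tilde\square_2$, and the latter is a cube. Iterating, the intersection of all cubes of $\tilde X$ containing $W$ is itself a cube, yielding a unique minimal cube $\tilde\square_W$ enclosing $W$ in $\tilde X$.

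Next I would transport this uniqueness back to $X$. Fix any vertex $v_0 \in \alpha(V(\sigma))$. By definition of $\alpha$-thickening, all vertices in $\alpha(V(\sigma))$ lie in a common cube with $v_0$, hence in the cubical $1$-neighborhood of $v_0$, which is contained in the cubical $2$-neighborhood. By the $5$-large hypothesis on $X$ (cubical $2$-neighborhoods are contractible, and thus lift isomorphically to $\tilde X$, as used in the proof of \Cref{lem:flag 5-large}), we can choose a lift $\tilde v_0$ of $v_0$ so that the cubical $1$-neighborhood of $v_0$ in $X$ maps isomorphically onto the cubical $1$-neighborhood of $\tilde v_0$ in $\tilde X$. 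Let $W$ be the lift of $\alpha(V(\sigma))$ under this isomorphism. Any cube of $X$ containing $\alpha(V(\sigma))$ contains $v_0$ and has all its vertices in the cubical $1$-neighborhood of $v_0$, so it is contained in this neighborhood as a subcomplex; the same holds in $\tilde X$ for any cube containing $W$. Thus the local isomorphism induces a bijection between cubes of $X$ containing $\alpha(V(\sigma))$ and cubes of $\tilde X$ containing $W$, preserving containment.

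The preceding paragraph produces a unique minimal cube $\tilde\square_W \subset \tilde X$ containing $W$ by the CAT($0$) argument, and this cube corresponds under the bijection to a unique minimal cube $\square_\sigma \subset X$ containing $\alpha(V(\sigma))$, as desired. I do not anticipate a serious obstacle here: once one is willing to pass to $\tilde X$, everything reduces to the standard fact about intersections of cubes in CAT($0$) cube complexes. The only step requiring care is verifying that every cube of $X$ containing $\alpha(V(\sigma))$ genuinely lies in the cubical $1$-neighborhood of $v_0$ (so that the local lifting isomorphism applies), and this follows directly from the definition of cubical distance together with the fact that $v_0$ itself is a vertex of any such cube.
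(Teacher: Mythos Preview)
Your proof is correct and follows essentially the same approach as the paper: reduce to the universal cover $\tilde X$ via the local lifting of cubical neighborhoods guaranteed by the $5$-large hypothesis, and then use that in a CAT($0$) cube complex any two cubes intersect in a common subcube, forcing uniqueness of the minimal cube. Your version spells out the lifting bijection between cubes of $X$ containing $\alpha(V(\sigma))$ and cubes of $\tilde X$ containing the lifted vertex set a bit more explicitly than the paper does, but the argument is the same.
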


\begin{proof}
    We are given a set $\alpha(V(\sigma))$ of vertices in the cube complex $X$, which we know is contained in a cube, and want to argue that there is a unique minimal cube containing $\alpha(V(\sigma))$. This is a purely local statement, as it only depends on the combinatorial $1$-neighborhood of a vertex in $\alpha(V(\sigma))$. From the $5$-large hypothesis, such a neighborhood lifts to $\tilde X$. Therefore it suffices to consider this question in $\tilde X$, which is a CAT($0$) cube complex. However any two cubes in a CAT($0$) cube complex can only intersect in a common subcube. Now assume $\square$ is another minimal cube containing the set $\alpha(V(\sigma))$. Then the intersection $\square \cap \square_\sigma$ is a subcube of both $\square_\sigma$, and $\square$. Minimality then implies that $\square_\sigma = \square \cap \square_\sigma = \square$, which implies the minimal cube is unique. 
\end{proof}

Notice that in our setting, the cube complexes we are thickening are always $5$-large, therefore \Cref{lem:min-cube-unique} applies. We will denote by $\square_\sigma$ the unique minimal cube containing the set $\alpha(V(\sigma))$. If we have a proper containment of simplices $\sigma \subsetneq \tau$, then clearly $\square _\sigma \subseteq \square_\tau$. Note that the containment of the corresponding cubes might not be proper, i.e., it could be that $\square _\sigma =\square_\tau$ despite the fact that $\sigma \neq \tau$. Our next lemma will exploit this feature and use it to relate the topology of the link of $\sigma$ with the topology of the link of $\square_\sigma$.

\begin{lem}\label{lem:three-props}
Given any simplex $\sigma \subset \Th_\alpha(X)$, the link $\lk(\sigma, \Th_\alpha(X))$ contains a pair of subcomplexes $\lk^\prime(\sigma, \Th_\alpha(X))$, $\lk^{\prime \prime}(\sigma, \Th_\alpha(X))$ with the following three properties:
\begin{enumerate}
\item $\lk(\sigma, \Th_\alpha(X))$ decomposes as a join $\lk^\prime(\sigma, \Th_\alpha(X))*\lk^{\prime \prime}(\sigma, \Th_\alpha(X))$;
\item if $\lk^\prime(\sigma, \Th_\alpha(X))$ is nonempty, then it is contractible;
\item the complex $\lk^{\prime \prime}(\sigma, \Th_\alpha(X))$ is homotopy equivalent to $\lk(\square_\sigma , X)$.
\end{enumerate}
\end{lem}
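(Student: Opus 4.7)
The plan is to partition the vertex set of $\lk(\sigma,\Th_\alpha(X))$ based on whether the $\alpha$-image of each vertex lies in $V(\square_\sigma)$. Specifically, I will set
\[A=\{y\in V(\lk(\sigma,\Th_\alpha(X))): \alpha(y)\in V(\square_\sigma)\}\quad\text{and}\quad B=V(\lk(\sigma,\Th_\alpha(X)))\setminus A,\]
and define $\lk^\prime(\sigma,\Th_\alpha(X))$ and $\lk^{\prime\prime}(\sigma,\Th_\alpha(X))$ to be the full subcomplexes of $\lk(\sigma,\Th_\alpha(X))$ on $A$ and $B$ respectively. The unifying observation used throughout is that, since $\square_\sigma$ is the unique minimal cube in $X$ containing $\alpha(V(\sigma))$ by \Cref{lem:min-cube-unique}, every cube of $X$ containing $\alpha(V(\sigma))$ must contain $\square_\sigma$.

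Properties (1) and (2) should follow quickly from this observation. For (2), any pair $y_1,y_2\in A$ satisfies $\alpha(y_i)\in V(\square_\sigma)\subseteq\square_\sigma$, so $\{y_1,y_2\}$ spans an edge in $\Th_\alpha(X)$; hence $A$ is a clique, and the flagness of $\Th_\alpha(X)$ from \Cref{lem:flag 5-large} forces $\lk^\prime(\sigma,\Th_\alpha(X))$ to be a simplex on $A$, contractible whenever nonempty. For (1), given simplices $\tau_A$ of $\lk^\prime$ and $\tau_B$ of $\lk^{\prime\prime}$, the simplex condition for $\tau_B\cup V(\sigma)$ produces a cube $\square^*$ containing $\alpha(V(\tau_B))\cup \alpha(V(\sigma))$; the observation then gives $\square^*\supseteq\square_\sigma\supseteq\alpha(V(\tau_A))$, so $\alpha(V(\tau_A)\cup V(\tau_B)\cup V(\sigma))\subseteq\square^*$ lies in a common cube, producing the required join simplex.

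The heart of the proof is property (3), for which I plan to apply Quillen's Theorem A to a suitable poset map. I will define a poset map $\phi$ from the face poset of $\lk^{\prime\prime}(\sigma,\Th_\alpha(X))$ to the face poset of $\lk(\square_\sigma,X)$, sending a simplex $\tau$ to the simplex $\hat{\square}_\tau$ of $\lk(\square_\sigma,X)$ corresponding to the unique minimal cube $\square_\tau$ of $X$ containing $\alpha(V(\tau))\cup V(\square_\sigma)$. Existence and uniqueness of $\square_\tau$ come from \Cref{lem:min-cube-unique}, and since $V(\tau)\subseteq B$ contributes at least one vertex whose $\alpha$-image lies outside $V(\square_\sigma)$, one has $\square_\tau\supsetneq\square_\sigma$, making $\hat{\square}_\tau$ a legitimate simplex of $\lk(\square_\sigma,X)$. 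The assignment $\phi$ is order-preserving because $\tau\subseteq\tau^\prime$ forces $\square_\tau\subseteq\square_{\tau^\prime}$.

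The key verification will then be to show that for each simplex $\rho$ of $\lk(\square_\sigma,X)$, the preimage $\phi^{-1}(\{\rho^\prime\leq\rho\})$ has contractible order complex. This preimage is the face poset of the full subcomplex of $\lk^{\prime\prime}$ on vertices $y$ with $\alpha(y)\in V(\square_\rho)\setminus V(\square_\sigma)$. Since all such $\alpha(y)$ lie in the common cube $\square_\rho$, these vertices are pairwise adjacent in $\Th_\alpha(X)$, and flagness makes them span a single simplex; the preimage is therefore the face poset of a simplex, whose order complex (its barycentric subdivision) is contractible. Quillen's Theorem A will then produce a homotopy equivalence between the order complexes of the two face posets, which are the barycentric subdivisions of $\lk^{\prime\prime}(\sigma,\Th_\alpha(X))$ and $\lk(\square_\sigma,X)$ respectively, yielding (3). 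The main obstacle is precisely this verification: it requires carefully leveraging the minimality of $\square_\sigma$ and the flagness of $\Th_\alpha(X)$ to identify the Quillen fibers with full simplices. A Nerve-Theorem alternative, covering $\lk^{\prime\prime}$ by the simplices $\alpha^{-1}(V(\square^*)\setminus V(\square_\sigma))$ for cubes $\square^*\supsetneq\square_\sigma$, is available but reduces to essentially the same poset-theoretic computation.
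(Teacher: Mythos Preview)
Your definitions of $\lk'$ and $\lk''$ and your arguments for properties (1) and (2) match the paper's exactly. The difference is entirely in property (3).

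The paper proves (3) by an explicit geometric construction: it introduces the cube complex $\hat Z$ consisting of cubes in the closed star of $\square_\sigma$ that are disjoint from $\square_\sigma$, observes $\hat Z\cong \square_\sigma\times\lk(\square_\sigma,X)\simeq\lk(\square_\sigma,X)$, and then shows that $\lk''(\sigma,\Th_\alpha(X))$ deformation retracts onto the subcomplex $\Th_\alpha(\hat Z)$. This retraction is built inductively over the cubes $\square\supseteq\square_\sigma$, ordered by dimension, checking at each step that the relevant relative homotopy groups vanish so that the partial retractions extend compatibly. Your route is genuinely different: you bypass the geometric construction entirely by using Quillen's Theorem~A for the poset map $\phi$ sending $\tau\mapsto\square_{\tau\ast\sigma}$, landing in the poset of cubes properly containing $\square_\sigma$ (equivalently, the face poset of $\lk(\square_\sigma,X)$). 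The Quillen fiber over $\square_\rho$ is the face poset of the full subcomplex on $\{y\in B:\alpha(y)\in V(\square_\rho)\}$, which is a single nonempty simplex by flagness and surjectivity of $\alpha$; contractibility of barycentric subdivisions of simplices then finishes the argument. Your approach is shorter and avoids the inductive bookkeeping of compatible deformation retractions; the paper's approach is more hands-on and yields an actual deformation retraction rather than just a homotopy equivalence. One small point: your appeal to \Cref{lem:min-cube-unique} for the set $\alpha(V(\tau))\cup V(\square_\sigma)$ is literally outside that lemma's statement, but since $\square_{\tau\ast\sigma}$ already contains both $\alpha(V(\tau))$ and $V(\square_\sigma)$ (the latter by minimality of $\square_\sigma$), one has $\square_\tau=\square_{\tau\ast\sigma}$ and the lemma applies directly.
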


\begin{proof}
We start by defining the subcomplex $\lk^\prime(\sigma, \Th_\alpha(X))$ (see \Cref{fig:vcd-arg1}) and verifying property (2). Consider the vertices in the set $\alpha^{-1}(V(\square_\sigma))$. From the definition of the thickening $\Th_\alpha(X)$, these span a simplex containing $\sigma$. Thus all vertices in this set are either at combinatorial distance zero or one from $\sigma$, according to whether they lie in $\sigma$ or not. We let $\lk^\prime(\sigma, \Th_\alpha(X))$ be spanned by those vertices at distance one. This subcomplex is either empty or consists of a single simplex, and hence it is contractible. This establishes property (2).

\begin{figure}[h!]
    \centering
\begin{tikzpicture}
    \newcommand{\scalar}{0.75}
    \newcommand{\lrx}{(\scalar)*3} 
    \newcommand{\lry}{(\scalar)*0} 
    \newcommand{\fbx}{(\scalar)*1.5} 
    \newcommand{\fby}{(\scalar)*1.5} 
    \coordinate (cube1) at (0,0);
    \coordinate (cube2) at ($(cube1)+({\lrx},{\lry})$);
    \coordinate (cube3) at ($(cube2)+({\fbx},{\fby})$);
    \coordinate (cube4) at ($(cube3)-({\lrx},{\lry})$);
    \draw[thick, black] (cube1) -- (cube2) -- (cube3) -- (cube4) -- cycle;
    \foreach \pt in {cube1,cube3}{
        \draw[thick,cbred,fill=cbred] (\pt) circle[radius=2.5pt];
    }
    \foreach \pt in {cube2,cube4}{
        \draw[thick,black,fill=black] (\pt) circle[radius=2.5pt];
    }
    \coordinate (v1a) at ($(cube1)+(0,{\scalar*3})$);
    \coordinate (v1b) at ($(cube1)+(0,{\scalar*4})$);
    \coordinate (v1c) at ($(cube1)+(0,{\scalar*5})$);
    \coordinate (v2a) at ($(cube2)+(0,{\scalar*3})$);
    \coordinate (v2b) at ($(cube2)+(0,{\scalar*4})$);
    \coordinate (v2c) at ($(cube2)+(0,{\scalar*5})$);
    \coordinate (v3a) at ($(cube3)+(0,{\scalar*3})$);
    \coordinate (v3b) at ($(cube3)+(0,{\scalar*4})$);
    \coordinate (v3c) at ($(cube3)+(0,{\scalar*5})$);
    \coordinate (v4a) at ($(cube4)+(0,{\scalar*3})$);
    \coordinate (v4b) at ($(cube4)+(0,{\scalar*4})$);
    \coordinate (v4c) at ($(cube4)+(0,{\scalar*5})$);
    \foreach \pt in {v1a,v2a,v3a,v4a}{
        \draw[ultra thick,black, dashed,-latex] (\pt) -- ($(\pt) + (0,{\scalar*(-2)})$);
    }
    \foreach \center in {v1b,v2b,v3b,v4b}{
        \draw[very thick,black, opacity=0.3] (\center) ellipse ({\scalar*0.5} and {\scalar*1.5});
    }
    \foreach \x in {v1b,v1c,v2a,v2b,v2c,v3a,v3b,v4a,v4b,v4c}{
        \foreach \y in {v1b,v1c,v2a,v2b,v2c,v3a,v3b,v4a,v4b,v4c}{
            \draw[thin,cbgreen,opacity=0.75] (\x) -- (\y);
        }}
    \draw[line width=5pt,white] (v1a) -- (v3c);
    \draw[thick,cbred] (v1a) -- (v3c);
    \draw[thick,cbred,fill=cbred] (v1a) circle[radius=2pt] node[anchor=south,text=cbred] {{\footnotesize $\sigma$}};
    \draw[thick,cbred,fill=cbred] (v4a) circle[radius=2pt];
    \foreach \pt in {v1b,v1c,v2a,v2b,v2c,v3a,v3b,v4a,v4b,v4c}{
        \draw[thick,cbgreen,fill=cbgreen] (\pt) circle[radius=2pt];
    }
    \foreach \pt in {v1a,v3c}{
        \draw[thick,cbred,fill=cbred] (\pt) circle[radius=2.5pt];
    }
    \node[anchor=west,color=cbred] (alpha) at (4,0) {{\footnotesize $\alpha(V(\sigma))$}};
    \draw[cbred,densely dotted,shorten >= 0.2cm,-latex] (alpha) to[out=150,in=20] (cube1);
    \draw[cbred,densely dotted,shorten >= 0.2cm,-latex] (alpha) -- (cube3);
    \draw (1.55,0) node[anchor=north,color=black] {{\footnotesize$\square_{\sigma}$}};
    \draw ($(v1a)+(0,-1)$) node[anchor=east,text=black] {{\footnotesize$\alpha$ map}};
    \draw ({\scalar*5},{\scalar*5}) node[anchor=west,text=cbgreen] {{\footnotesize \begin{tabular}{c}Maximal simplex\\in $\lk'(\sigma,\Th_{\alpha}(X))$\end{tabular}}};
\end{tikzpicture}

    \captionsetup{margin=0.75in}
    \captionof{figure}{Given a simplex $\sigma$ (red) in $\Th_\alpha(X)$, let $\square_\sigma$ (black) be the minimal cube in $X$ containing $\alpha\!\left(V(\sigma)\right)$. The maximal simplex (green) in $\lk^{\prime}(\sigma,\Th_\alpha(X))$ is spanned by the vertices in $\alpha^{-1}\!\left(V\!\left(\square_\sigma\right)\right) \setminus V(\sigma)$.}
    \label{fig:vcd-arg1}
\end{figure}
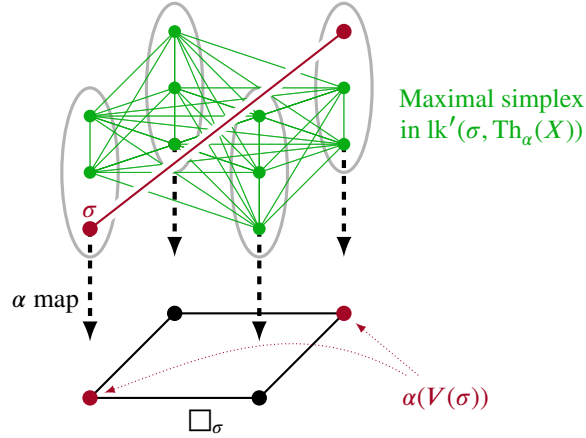

Next let us define the complex $\lk^{\prime \prime}(\sigma, \Th_\alpha(X))$ and establish property (1). This is the subcomplex of $\lk(\sigma, \Th_\alpha(X))$ spanned by vertices $v$ with the property that $\alpha(v)\notin \square_\sigma$. In particular, the vertices of $\lk(\sigma, \Th_\alpha(X))$ are partitioned between $\lk^\prime(\sigma, \Th_\alpha(X))$ and $\lk^{\prime \prime}(\sigma, \Th_\alpha(X))$ according to whether $\alpha(v)\in \square_\sigma$ or $\alpha(v)\notin \square_\sigma$. This implies the subcomplexes $\lk^\prime(\sigma, \Th_\alpha(X))$ and $\lk^{\prime \prime}(\sigma, \Th_\alpha(X))$ are disjoint subcomplexes inside $\lk(\sigma, \Th_\alpha(X))$ and gives the containment 
    \[ \lk(\sigma, \Th_\alpha(X)) \subseteq \lk^\prime(\sigma, \Th_\alpha(X))*\lk^{\prime \prime}(\sigma, \Th_\alpha(X)). \]
For the reverse containment, given a pair of simplices $\tau^\prime \subseteq \lk^\prime(\sigma, \Th_\alpha(X))$ and $\tau^{\prime \prime} \subseteq \lk^{\prime \prime}(\sigma, \Th_\alpha(X))$ we have that $\alpha(V(\tau^\prime))\subseteq \square_\sigma$, while $\alpha(V(\tau^{\prime \prime}))\subseteq \square$ for some cube $\square_\sigma \subsetneq \square$. It follows that $\alpha(V(\tau^\prime) \cup V(\tau^{\prime \prime})) \subseteq \square$ and hence that $\tau^\prime * \tau^{\prime \prime}$ defines a simplex in $\lk(\sigma, \Th_\alpha(X))$. This yields the reverse containment and establishes property (1).

\medskip

To complete the proof of the lemma, it remains to verify property (3). Let $Z$ denote the union of all the cubes containing $\square_\sigma$ and $\hat Z$ be the cube subcomplex of $Z$ consisting of all cubes in $Z$ that do not intersect $\square_\sigma$. There is a natural homeomorphism $\hat Z \cong \square_\sigma \times \lk(\square_\sigma , X)$, so $\hat Z$ is homotopy equivalent to $\lk(\square_\sigma , X)$. From the definition of $\lk^{\prime \prime}(\sigma, \Th_\alpha(X))$, any simplex $\tau\subset \lk^{\prime \prime}(\sigma, \Th_\alpha(X))$ has the property that $\alpha(V(\tau))$ is disjoint from $\square_\sigma$, and hence it lies in $V(\hat Z)$. In other words, the entire complex $\lk^{\prime \prime}(\sigma, \Th_\alpha(X))$ lies ``above'' the cube complex $\hat Z$. 

Now consider the thickening $\Th_\alpha(\hat Z)$ obtained by the restriction $\alpha \colon \alpha^{-1}(V(\hat Z)) \rightarrow V( \hat Z)$. This is a subcomplex of $\lk^{\prime \prime}(\sigma, \Th_\alpha(X))$ that is homotopy equivalent to $\hat Z$, and hence to $\lk(\square_\sigma , X)$. To complete the verification of property (3) it therefore suffices to show that $\lk^{\prime \prime}(\sigma, \Th_\alpha(X))$ deformation retracts to $\Th_\alpha(\hat Z)$. 

\begin{figure}[h!]
    \centering
\begin{tikzpicture}
    \newcommand{\scalar}{0.75}
    \newcommand{\lrx}{(\scalar)*3} 
    \newcommand{\lry}{(\scalar)*0} 
    \newcommand{\fbx}{(\scalar)*1.5} 
    \newcommand{\fby}{(\scalar)*1.5} 
    \coordinate (cube1) at (0,0);
    \coordinate (cube2) at ($(cube1)+({\lrx},{\lry})$);
    \coordinate (cube3) at ($(cube2)+({\fbx},{\fby})$);
    \coordinate (cube4) at ($(cube3)-({\lrx},{\lry})$);
    \draw[white,pattern=north west lines, pattern color=cbblue, opacity=0.75] (cube1) -- (cube2) -- (cube3) -- (cube4) -- cycle;
    \draw[thick, black] (cube1) -- (cube2) -- (cube3) -- (cube4);
    \foreach \pt in {cube1,cube4}{
        \draw[thick,cbred,fill=cbred] (\pt) circle[radius=2.5pt];
    }
    \foreach \pt in {cube2,cube3}{
        \draw[thick,black,fill=black] (\pt) circle[radius=2.5pt];
    }
    \draw[thick,cbred] (cube4) to node[yshift=1.1em] {{\footnotesize $\square_\sigma$}}  (cube1);
    \coordinate (v1a) at ($(cube1)+(0,{\scalar*3})$);
    \coordinate (v1b) at ($(cube1)+(0,{\scalar*4})$);
    \coordinate (v1c) at ($(cube1)+(0,{\scalar*5})$);
    \coordinate (v2a) at ($(cube2)+(0,{\scalar*3})$);
    \coordinate (v2b) at ($(cube2)+(0,{\scalar*4})$);
    \coordinate (v2c) at ($(cube2)+(0,{\scalar*5})$);
    \coordinate (v3a) at ($(cube3)+(0,{\scalar*3})$);
    \coordinate (v3b) at ($(cube3)+(0,{\scalar*4})$);
    \coordinate (v3c) at ($(cube3)+(0,{\scalar*5})$);
    \coordinate (v4a) at ($(cube4)+(0,{\scalar*3})$);
    \coordinate (v4b) at ($(cube4)+(0,{\scalar*4})$);
    \coordinate (v4c) at ($(cube4)+(0,{\scalar*5})$);
    \foreach \pt in {v1a,v2a,v3a,v4a}{
        \draw[ultra thick,black, dashed,-latex] (\pt) -- ($(\pt) + (0,{\scalar*(-2)})$);
    }    
    \foreach \x in {v1b,v1c,v4b,v4c}{
        \foreach \y in {v2a,v2b,v2c,v3a,v3b,v3c}{
            \draw[very thin,gray,opacity=0.6] (\x) -- (\y);
        }
    }
    \draw[thick,cbred] (v4a) -- (v1a);
    \draw[thick,cbred,fill=cbred] (v1a) circle[radius=2pt] node[anchor=south,text=cbred] {{\footnotesize $\sigma$}};
    \draw[thick,cbred,fill=cbred] (v4a) circle[radius=2pt];
    \foreach \x in {v1b,v1c,v4b,v4c}{
        \draw[thick,cbgreen,fill=cbgreen] (\x) circle[radius=2pt];
        \foreach \y in {v1b,v1c,v4b,v4c}{
            \draw[cbgreen,opacity=0.75] (\x) -- (\y);
        }}
    \foreach \x in {v2a,v2b,v2c,v3a,v3b,v3c}{
        \draw[thick,cbblue,fill=cbblue] (\x) circle[radius=2pt];
        \foreach \y in {v2a,v2b,v2c,v3a,v3b,v3c}{
            \draw[cbblue,opacity=0.75] (\x) -- (\y);
        }
    }
    \foreach \center in {v1b,v2b,v3b,v4b}{
        \draw[thick,black, opacity=0.3] (\center) ellipse ({\scalar*0.5} and {\scalar*1.5});
    }
    \draw ($(v3a)+(0,-1)$) node[anchor=west,text=black] {{\footnotesize $\alpha$ map}};
    \draw ($0.6*(cube2)+0.4*(cube3)$) node[anchor=west,text=cbblue] {{\footnotesize \begin{tabular}{c}maximal\\square $\square$\\containing $\square_\sigma$\end{tabular}}};
    \draw ($(v4c)+(-0.5,0)$) node[anchor=east,text=cbgreen] {{\footnotesize $\mathrm{lk}'(\sigma,\Th_{\alpha}(X))$}};
    \draw ($(v3b)+(0.5,0)$) node[anchor=west,text=cbblue] {{\footnotesize\begin{tabular}{c}simplex in \\ $\lk^{\prime \prime}(\sigma,\Th_{\alpha}(X))$ \\ lying above $\square$ \end{tabular}}};
\end{tikzpicture}
    \captionsetup{margin=0.75in}
    \captionof{figure}{A general simplex in $\lk(\sigma,\Th_\alpha(X))$ splits as a join (gray) of simplices in $\mathrm{lk}^\prime(\sigma,\Th_\alpha(X))$ (green) and $\lk^{\prime \prime}(\sigma,\Th_{\alpha}(X))$ (blue)}
    \label{fig:vcd-arg2}
\end{figure}
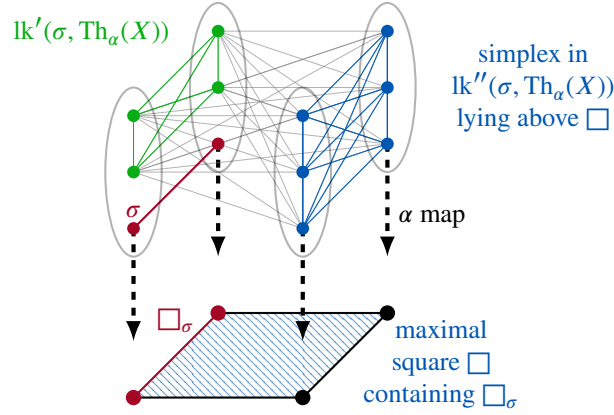

Recall that $\hat Z$ is defined as a subcomplex of the cube complex $Z$. To construct the deformation retraction, we proceed by induction on the dimension of the cubes in $Z$. More precisely, we will consider a cube $\square \subset Z$ containing $\square_\sigma$ and the restriction of the two complexes $\lk^{\prime \prime}(\sigma, \Th_\alpha(X))$ and $\Th_\alpha(\hat Z)$ lying above $\hat Z \cap \square$, i.e., whose vertices $\alpha$-project to vertices in $\hat Z \cap \square$. We induct on $\dim(\square)$ and show that in each dimension we can choose a deformation retraction compatible with the previously-constructed deformation retractions. This will allow the deformation retractions associated with each cube to glue compatibly and yield a global deformation retraction of $\lk^{\prime \prime}(\sigma, \Th_\alpha(X))$ to $\Th_\alpha(\hat Z)$.

\begin{figure}[h!]
    \centering
\begin{tikzpicture}
\newcommand{\scalar}{0.75}
\newcommand{\lrx}{(\scalar)*3} 
\newcommand{\lry}{(\scalar)*0} 
\newcommand{\fbx}{(\scalar)*1.5} 
\newcommand{\fby}{(\scalar)*1.5} 
\newcommand{\udx}{(\scalar)*0}
\newcommand{\udy}{(\scalar)*2.5}
\newcommand{\cubecoords}{ 
    \coordinate (cube1) at (0,0);
    \coordinate (cube2) at ($(cube1)+({\lrx},{\lry})$);
    \coordinate (cube3) at ($(cube2)+({\fbx},{\fby})$);
    \coordinate (cube4) at ($(cube3)-({\lrx},{\lry})$);
    \coordinate (cube5) at ($(cube1)+({\udx},{\udy})$);
    \coordinate (cube6) at ($(cube2)+({\udx},{\udy})$);
    \coordinate (cube7) at ($(cube3)+({\udx},{\udy})$);
    \coordinate (cube8) at ($(cube4)+({\udx},{\udy})$);
}
\draw[ultra thick,dashed,-latex] ({\scalar*(-1.5)},{\scalar*(-0.25)}) to node[anchor=east] {{\footnotesize $\alpha$ map}} ({\scalar*(-1.5)},{\scalar*(-2.25)});
\draw[ultra thick,dashed,cbblue,-latex] ({\scalar*(0)},{\scalar*2}) to node[anchor=north,text=cbblue] {{\footnotesize deformation retract}} ({\scalar*2},{\scalar*2});
\draw[ultra thick,dashed,-latex] ({\scalar*7},{\scalar*(-0.25)}) to node[anchor=east] {{\footnotesize $\alpha$ map}} ({\scalar*7},{\scalar*(-2.25)});
\draw[ultra thick,dashed,cbblue,-latex] ({\scalar*(2)},{\scalar*(-5)}) to node[anchor=north,text=cbblue] {{\footnotesize radial projection}} ({\scalar*0},{\scalar*(-5)});
\begin{scope}[shift={({\scalar*(-5.5)},{\scalar*(-7)})}]
    \cubecoords
    \coordinate (link1a) at ($(cube1)+({\scalar*0.75},0)$);
    \coordinate (link1b) at ($(cube1)+(0,{\scalar*0.75})$);
    \coordinate (link2a) at ($(cube4)+({\scalar*0.75},0)$);
    \coordinate (link2b) at ($(cube4)+(0,{\scalar*0.75})$);
    \coordinate (midpt1) at ($0.5*(link1a)+0.5*(link2a)$);
    \coordinate (midpt2) at ($0.5*(link1b)+0.5*(link2b)$);
    \coordinate (midsigma) at ($0.5*(cube1)+0.5*(cube4)$);
    \draw[cbred] (cube1) -- (cube4);
    \draw[very thick,cbblue] (midpt1) arc (0:90:{\scalar*0.75});
    \draw[opacity=0,cbblue, shade, top color=white, bottom color=cbblue, shading angle=45, fill opacity=0.5] (link1a) arc(0:90:{\scalar*0.75}) to (link2b) arc(90:0:{\scalar*0.75}) to (link1a);
    \draw[thick,black] (cube3) -- (cube4) -- (cube8) -- (cube7) -- cycle;
    \draw[thick,black] (cube1) -- (cube2) -- (cube3);
    \draw[thick,black] (cube1) -- (cube5) -- (cube8);
    \draw[line width=4pt,white] (cube5) -- (cube6) -- (cube7);
    \draw[thick,black] (cube5) -- (cube6) -- (cube7);
    \draw[line width=4pt,white] (cube2) -- (cube6);
    \draw[thick,black] (cube2) -- (cube6);
    \foreach \pt in {cube1,cube4}{
        \draw[thick,cbred,fill=cbred] (\pt) circle[radius=2pt];
    }
    \foreach \pt in {cube2,cube3,cube5,cube6,cube7,cube8}{
        \draw[thick,black,fill=black] (\pt) circle[radius=2pt];
    }
    \draw (cube1) node[anchor=south east,text=cbred] {$\square_\sigma$};
    \draw ($0.5*(cube1)+0.5*(cube2)$) node[anchor=north,text=black] {{\footnotesize cube $\square$ in $Z$}};
    \node[text=cbblue] (words) at ({\scalar*2},{\scalar*(-1.5)}) {{\footnotesize \begin{tabular}{c}portion of\\$\mathrm{lk}(\square_\sigma,X) \times \square_\sigma$\\in the cube $\square$\end{tabular}}};
    \draw[cbblue,densely dashed,-latex] (words) to[out=30,in=-60] ({\scalar*4.5},{\scalar*0.5}) to[out=120,in=0] ($0.8*(link1a)+0.2*(link2a)+(0.25,0)$);
    \foreach \ang in {0,15,30,...,90}{
        \draw[cbblue,-latex] ($(midsigma)+({\scalar*0.75*cos(\ang)},{\scalar*0.75*sin(\ang)})$) -- ($(midsigma)+({cos(\ang)},{sin(\ang)})$);
    }
\end{scope}
\begin{scope}[shift={({\scalar*(-5.5)},{\scalar*0})}]
    \cubecoords
    \draw[thick,cbred] (cube1) -- (cube4);
    \foreach \x in {cube2,cube3,cube5,cube6,cube7,cube8}{
        \foreach \y in {cube2,cube3,cube5,cube6,cube7,cube8}{
        \draw[line width=4pt,white] (\x) -- (\y);
        \draw[thick,cbblue] (\x) -- (\y);
      }
    }
    \foreach \pt in {cube1,cube4}{
        \draw[thick,cbred,fill=cbred] (\pt) circle[radius=2pt];
    }
    \foreach \pt in {cube2,cube3,cube5,cube6,cube7,cube8}{
        \draw[thick,cbblue,fill=cbblue] (\pt) circle[radius=2pt];
    }
    \draw[thick,cbblue,fill=cbblue,fill opacity=0.15] (cube5) -- (cube6) -- (cube2) -- (cube3) -- (cube7) -- (cube8) -- cycle;
    \draw[thick,cbblue,fill=cbblue!75!black, fill opacity=0.25] (cube5) -- (cube6) -- (cube2) -- cycle;
    \draw ($0.5*(cube1)+0.5*(cube4)$) node[anchor=south east,text=cbred] {{\footnotesize $\sigma$}};
    \draw ($0.5*(cube8)+0.5*(cube7)$) node[anchor=south,text=cbblue] {{\footnotesize \begin{tabular}{c}simplex in $\lk^{\prime \prime}(\sigma,\Th(X)$\\ lying above $\square$\end{tabular}}};
\end{scope}
\begin{scope}[shift={({\scalar*3},{\scalar*0})}]
    \cubecoords
    \draw[thick,cbred] (cube1) -- (cube4);
    \foreach \x in {cube2,cube3,cube6,cube7}{
        \foreach \y in {cube2,cube3,cube6,cube7}{
        \draw[line width=4pt,white] (\x) -- (\y);
        \draw[thick,cbblue] (\x) -- (\y);
      }
    }
    \foreach \x in {cube5,cube6,cube7,cube8}{
        \foreach \y in {cube5,cube6,cube7,cube8}{
        \draw[line width=4pt,white] (\x) -- (\y);
        \draw[thick,cbblue] (\x) -- (\y);
      }
    }
    \foreach \pt in {cube1,cube4}{
        \draw[thick,cbred,fill=cbred] (\pt) circle[radius=2pt];
    }
    \foreach \pt in {cube2,cube3,cube5,cube6,cube7,cube8}{
        \draw[thick,cbblue,fill=cbblue] (\pt) circle[radius=2pt];
    }
    \draw[thick,cbblue,fill=cbblue, fill opacity=0.1] (cube5) -- (cube6) -- (cube2) -- (cube3) -- (cube7) -- (cube8) -- cycle;
    \draw ($0.5*(cube1)+0.5*(cube4)$) node[anchor=south east,text=cbred] {{\footnotesize $\sigma$}};
    \draw ($0.5*(cube7)+0.5*(cube8)$) node[anchor=south,text=cbblue] {{\footnotesize $\Th_{\alpha}(\hat{Z} \cap \square)$}};
\end{scope}
\begin{scope}[shift={({\scalar*3},{\scalar*(-7)})}]
    \cubecoords
    \draw[cbred] (cube1) -- (cube4);
    \draw[thick,black] (cube2) -- (cube1) -- (cube5);
    \draw[thick,black] (cube3) -- (cube4) -- (cube8);
    \draw[line width=4pt,white] (cube5) -- (cube6) -- (cube7) -- (cube8) -- cycle;
    \draw[line width=4pt,white] (cube2) -- (cube3) -- (cube7) -- (cube6) -- cycle;
    \draw[thick,cbblue,fill=cbblue,fill opacity=0.1] (cube5) -- (cube6) -- (cube7) -- (cube8) -- cycle;
    \draw[thick,cbblue,fill=cbblue,fill opacity=0.1] (cube2) -- (cube3) -- (cube7) -- (cube6) -- cycle;
    \foreach \pt in {cube1,cube4}{
        \draw[thick,cbred,fill=cbred] (\pt) circle[radius=2pt];
    }
    \foreach \pt in {cube2,cube3,cube5,cube6,cube7,cube8}{
        \draw[thick,cbblue,fill=cbblue] (\pt) circle[radius=2pt];
    }
    \draw (cube1) node[anchor=south east,text=cbred] {{\footnotesize $\square_\sigma$}};
    \draw ($0.5*(cube8)+0.5*(cube7)$) node[anchor=south,text=cbblue] {{\footnotesize $\hat{Z} \cap \square$}};
    \draw ($0.5*(cube1)+0.5*(cube2)$) node[anchor=north,text=black] {{\footnotesize cube $\square$ in $Z$}};
\end{scope}
\end{tikzpicture}
    \captionsetup{margin=0.75in}
    \captionof{figure}{Sequence of homotopy equivalences illustrating the argument for Property (3) in \Cref{lem:three-props}}
    \label{fig:vcd-arg3}
\end{figure}
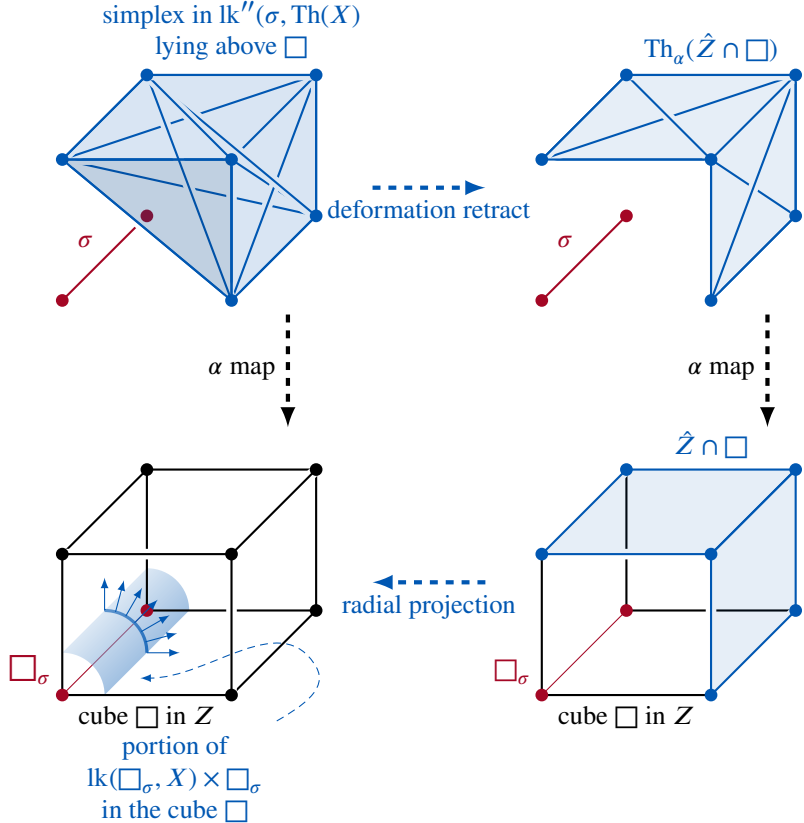

The bottom of the induction is the case of a cube $\square \subset Z$ with $\dim(\square) = \dim(\square_\sigma) +1$. In this case, $\square = \square_\sigma \times I$, and $\hat Z \cap \square$ is the face of $\square$ opposite $\square_\sigma$. This forces the two subcomplexes above $\hat Z \cap \square$ to coincide, and hence we can take the identity map as the deformation retraction. Now for the inductive step, we assume that we are already provided the deformation retractions for the portions of the simplex that lie above the relevant lower dimensional faces of the cube $\hat Z \cap \square$. We would like to extend the deformation retraction to the rest of the simplicial complex lying above $\hat Z \cap \square$.

In general, if we have a simplicial complex $A$ and a subcomplex $B\subseteq A$, the obstruction to deformation retracting $A$ to $B$ lies in the relative homotopy groups $\pi_*(A, B)$. In our setting, we already have a deformation retraction from some intermediate subspace $B\subseteq B^\prime \subseteq A$, and would like to extend the deformation retraction to one from $A$ to $B$. Again, the obstructions are the relative homotopy groups $\pi_*(A, B^\prime) \cong \pi_*(A,B)$. If these vanish, then one can construct the desired deformation retraction by composing a deformation retraction from $A$ to $B^\prime$ with the given deformation retraction from $B^\prime$ to $B$. Thus the problem reduces to understanding the topology of the two sets we are studying.

To this end, let us identify the topology of the part of $\lk^{\prime \prime}(\sigma, \Th_\alpha(X))$ lying above $\hat Z \cap \square$. From the definition, the vertices of this subcomplex are precisely those whose $\alpha$-projection are contained in $V(\square) \setminus V(\square_\sigma)$. Since these vertices have image contained in a single cube, they span a simplex. Thus the restriction of the complex $\lk^{\prime \prime}(\sigma, \Th_\alpha(X))$ to the preimage under $\a$ of the cube $\square$ is a single simplex $\Delta$, and thus it is a contractible set. 

On the other hand, the subcomplex $\Th_\alpha(\hat Z \cap \square)$ is the $\alpha$-thickening of the cube complex $\hat Z \cap \square$. However this cube complex is spanned by the vertices $V(\square) \setminus V(\square_\sigma)$, and thus is homotopic to the contractible set $\partial \square \setminus \square_\sigma$. We conclude that $\Th_\alpha(\hat Z \cap \square)$ is also a contractible subset of $\Delta$. Since both spaces are contractible, the relative homotopy groups all vanish and the desired deformation retraction exists. This completes the inductive step and verifies statement (3), completing the proof of the lemma.
\end{proof}

Informally, \Cref{lem:three-props} shows that, up to homotopy, the links of simplices in a thickening are either trivial or coincide with the links in the underlying cube complex. Note the following immediate consequence of the previous result.

\begin{lem}\label{lem:cdim-of-links}
    If $\sigma$ is an arbitrary nonempty simplex in $T_n$, then $\cdim(\lk(\sigma, T_n)) \leq n-1$.
\end{lem}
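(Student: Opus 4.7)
The plan is to prove the statement by induction on $n$, combining \Cref{lem:three-props} (which reduces the link of a simplex in the thickening $T_n$ to the link of a cube in the underlying cube complex $X_n$) with \Cref{lem:links-cube-to-vertex} (which reduces a cube link in $X_n$ to a simplex link inside a vertex link of $X_n$). The key additional observation is that, by construction, the vertex links of $X_n$ are themselves copies of $T_{n-1}$, which opens the door to the inductive step.

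First, I would apply \Cref{lem:three-props} to write $\lk(\sigma, T_n) = \lk'(\sigma, T_n) * \lk''(\sigma, T_n)$. If $\lk'(\sigma, T_n)$ is nonempty, it is contractible, so the whole join is contractible and $\cdim \leq 0 \leq n-1$. Otherwise, the same lemma gives a homotopy equivalence $\lk(\sigma, T_n) \simeq \lk(\square_\sigma, X_n)$, and the task becomes bounding the cohomological dimension of a cube link in $X_n$.

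Next, I would pick any vertex $v \in \square_\sigma$ and invoke \Cref{lem:links-cube-to-vertex} to identify $\lk(\square_\sigma, X_n) \cong \lk(\hat\sigma, \lk(v, X_n))$, where $\hat\sigma$ is the simplex in $\lk(v, X_n)$ corresponding to $\square_\sigma$. Since $X_n = D_n/\Lambda$ is a quotient of the Davis complex of $G_n$, whose nerve is the flag complex on the $1$-skeleton of $T_{n-1}$, and since $T_{n-1}$ is already $5$-large (hence flag) by \Cref{lem:flag 5-large}, the nerve is just $T_{n-1}$ itself. Hence each vertex link in $X_n$ is isomorphic to $T_{n-1}$. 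If $\square_\sigma$ is $0$-dimensional, then $\hat\sigma$ is empty and $\lk(\square_\sigma, X_n) \cong T_{n-1} \simeq X_{n-1}$ has cohomological dimension $n-1$. If $\dim(\square_\sigma) \geq 1$, then $\hat\sigma$ is a nonempty simplex of $T_{n-1}$, and the inductive hypothesis yields $\cdim(\lk(\hat\sigma, T_{n-1})) \leq n-2$.

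The base case $n=1$ can be handled by direct inspection: $X_1$ is a cycle, every vertex link in $X_1$ is a two-point set (cohomological dimension $0$), and every edge link in $X_1$ is empty, so the same case split via \Cref{lem:three-props} finishes the verification. The main subtlety I expect to have to track carefully is the identification $\lk(v, X_n) \cong T_{n-1}$: this rests both on the general fact that vertex links in the Davis complex $D_n$ coincide with the nerve of the Coxeter system, and on the fact that in the inductive construction $\Lambda \leq G_n$ was chosen so that cubical $2$-neighborhoods of vertices descend faithfully from $D_n$ to the quotient $X_n$, which is exactly what guarantees that the local link structure is preserved.
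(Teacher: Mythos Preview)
Your proposal is correct and follows essentially the same argument as the paper: apply \Cref{lem:three-props} to reduce to $\lk(\square_\sigma, X_n)$, then \Cref{lem:links-cube-to-vertex} to reduce to $\lk(\hat\sigma, T_{n-1})$, and conclude by induction (on both this lemma and the Main Theorem), with the cases $\hat\sigma=\emptyset$ and $\hat\sigma\neq\emptyset$ handled exactly as you describe. Your explicit justification of $\lk(v,X_n)\cong T_{n-1}$ via the nerve and the choice of $\Lambda$, and your separate treatment of the base case $n=1$, are slightly more detailed than the paper's presentation but match it in substance.
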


\begin{proof}
Since $T_n$ is a thickening, we can apply \Cref{lem:three-props} to see that $\lk(\sigma, T_n)$ decomposes as a join $\lk^\prime(\sigma, T_n)*\lk^{\prime\prime}(\sigma, T_n)$. If $\lk^\prime(\sigma, T_n)\neq \emptyset$, then $\lk^\prime(\sigma, T_n)$ is a nonempty simplex, hence it is contractible. This forces $\lk(\sigma, T_n)$ to likewise be contractible, in which case we are done.

Alternatively, if $\lk^\prime(\sigma, T_n)= \emptyset$, then \Cref{lem:three-props} tells us that
    \[ \lk(\sigma, T_n) = \lk^{\prime\prime}(\sigma, T_n) \simeq \lk(\square_\sigma, X_n). \]
We let $v\in \square_\sigma$ be an arbitrary vertex of the cube $\square_\sigma$, and apply \Cref{lem:links-cube-to-vertex} to obtain a combinatorial isomorphism
    \[ \lk(\square_\sigma, X_n) \cong \lk(\tau, \lk(v,X)) \cong \lk(\tau, T_{n-1}) \]
where $\tau$ is the (possibly empty) simplex in $\lk(v,X)$ corresponding to $\square_\sigma$.

If the simplex $\tau$ is empty, this means $\square_\sigma$ coincides with the vertex $v$ and $\lk(\tau, T_{n-1}) = T_{n-1}$. In that case we conclude by induction with respect to proving the Main Theorem that
    \[ \cdim(\lk(\sigma, T_n)) = \cdim(T_{n-1}) = n-1. \]
On the other hand, if $\tau$ is nonempty then we again obtain
    \[ \cdim(\lk(\sigma, T_n)) = \cdim(\lk(\tau,T_{n-1})) \leq n-2 \]
by induction with respect to both this lemma and the Main Theorem. Either way we obtain the desired bound, which concludes the proof of the lemma.
\end{proof}

In order to compute the cohomological dimension of the polyhedral complex $S(\sigma, T_n)$, we will instead compute the cohomological dimension of the simplicial complex $\hat S(\sigma, T_n)$. Our next step is to give a partition of vertices in $\hat S(\sigma, T_n)$, according to their closest face of $\sigma$.

\medskip

Note that every vertex $v$ in $\hat S(\sigma, T_n)$ is at distance one from some vertex of $\sigma$ by definition. For such a vertex $v$, we can look at \textbf{all} the vertices of $\sigma$ that are adjacent to $v$. This defines a face of $\sigma$, and gives us a map $\pi \colon V(\hat S(\sigma, T_n)) \rightarrow \mathcal F(\sigma)$ from the set of vertices in $\hat S(\sigma, T_n)$ to the set $\mathcal F(\sigma)$ of faces of $\sigma$. We think of $\pi(v)$ as encoding the face of $\sigma$ that is closest to the vertex $v$. We can now partition the vertices of $\hat S(\sigma, T_n)$ according to their image under $\pi$; see \Cref{fig:combinatorial-filtration}. Next, use this partition to form a filtration of $\hat S(\sigma, T_n)$ by subcomplexes. Denote by $S_i$ the subcomplex of $\hat S(\sigma, T_n)$ spanned by the preimage under $\pi$ of all faces of codimension $\leq i$. We clearly have containments $S_0 \subset S_1 \subset \cdots \subset S_k = \hat S(\sigma, T_n)$, where $k=\dim(\sigma)$. This gives us a filtration of the complex $\hat S(\sigma, T_n)$ by the subcomplexes $S_i$; again, see \Cref{fig:combinatorial-filtration}.

\begin{figure}[h]
\centering
\begin{tikzpicture}
\newcommand{\BsigmaT}{
    \coordinate (p1) at (0,1);
    \coordinate (p2) at (-1,0);
    \coordinate (p3) at (1,0);
    \coordinate (p4) at ($(p1) + (1.75,0)$);
    \coordinate (p5) at ($(p1) + (0.5,1.5)$);
    \coordinate (p6a) at ($(p1) + (-1.75,0.25)$);
    \coordinate (p6b) at ($(p1) + (-0.75,1.25)$);
    \coordinate (p7) at ($(p2) + (-1,-0.5)$);
    \coordinate (p8) at ($(p2) + (0.75,-1.5)$);
    \coordinate (p9) at ($(p3) + (0,-1)$);
    \coordinate (p10) at ($(p3) + (0.75,-1.25)$);
    \draw[gray, densely dashed, cap=round] (p3) -- (p5); \draw[line width=4pt, white, cap=round] (p1)--(p4); \draw[gray, densely dashed, cap=round] (p1) -- (p4);
    \draw[gray, densely dashed, cap=round] (p1) -- (p5);
    \draw[gray, densely dashed, cap=round] (p1) -- (p6a);
    \draw[gray, densely dashed, cap=round] (p1) -- (p6b);
    \draw[gray, densely dashed, cap=round] (p2) -- (p6a);
    \draw[gray, densely dashed, cap=round] (p2) -- (p7);
    \draw[gray, densely dashed, cap=round] (p2) -- (p8);
    \draw[gray, densely dashed, cap=round] (p3) -- (p8);
    \draw[gray, densely dashed, cap=round] (p3) -- (p4);
    \draw[gray, densely dashed, cap=round] (p3) -- (p9);
    \draw[gray, densely dashed, cap=round] (p3) -- (p10);
    \draw[gray, densely dashed, cap=round] (p4) -- (p10);
    \draw[gray, densely dashed, cap=round] (p8) -- (p9);
    \draw[ultra thick, gray, cap=round] (p1) -- (p2) -- (p3) -- cycle;
    \foreach \pt in {p1,p2,p3}{
        \draw[thick,gray,fill=gray] (\pt) circle[radius=1.75pt];
    }
    \draw ($0.33*(p1)+0.33*(p2)+0.33*(p3)$) node[text=black] {{\footnotesize $\sigma$}};
    \draw[thick, densely dashed, gray, cap=round] (p4) -- (p5) -- (p6b) -- (p6a) -- (p7) -- (p8) -- (p10) -- cycle;
    \draw[thick, densely dashed, gray, cap=round] (p8) -- (p9) -- (p10);
    \draw[thick, densely dashed, gray, cap=round] (p8) -- (p10);
    }
\begin{scope}[shift={(-5,0)}]
\BsigmaT
\fill[pattern=north west lines, pattern color=gray, opacity=0.8] (p8) -- (p9) -- (p10) -- cycle;
\draw (p8) node[anchor=north] {{\footnotesize codim-$0$ faces of $\sigma$}};
\end{scope}
\begin{scope}[shift={(0,0)}]
\BsigmaT
\fill[pattern=north west lines, pattern color=gray, opacity=0.8] (p8) -- (p9) -- (p10) -- cycle;
\draw[line width=4pt, cbgreen, cap=round] (p1) -- (p2);
\draw[ultra thick, white, cap=round] (p1) -- (p2);
\draw[line width=4pt, cbblue, cap=round] (p2) -- (p3);
\draw[ultra thick, white, cap=round] (p2) -- (p3);
\draw[line width=4pt, cbred, cap=round] (p1) -- (p3);
\draw[ultra thick, white, cap=round] (p1) -- (p3);
\draw[very thick, gray, fill=gray] (p1) circle[radius=2.25pt];
\draw[very thick, gray, fill=gray] (p2) circle[radius=2.25pt];
\draw[very thick, gray, fill=gray] (p3) circle[radius=2.25pt];
\draw[very thick, cbgreen, fill=cbgreen] (p6a) circle[radius=2.25pt];
\draw[very thick, cbblue, fill=cbblue] (p8) circle[radius=2.25pt];
\draw[ultra thick, cbred] (p4) -- (p5);
\draw[very thick, cbred, fill=cbred] (p4) circle[radius=2.25pt];
\draw[very thick, cbred, fill=cbred] (p5) circle[radius=2.25pt];
\draw[thick, cbgreen, -latex, shorten >= 0.4em, shorten <= 0.4em] (p6a) to node[anchor=north] {{\footnotesize $\pi$}} ($0.5*(p1)+0.5*(p2)$);
\draw[thick, cbred, -latex, shorten >= 0.4em, shorten <= 0.4em] ($0.5*(p4)+0.5*(p5)$) to[out=-90,in=30] node[anchor=north west] {{\footnotesize $\pi$}} ($0.5*(p1)+0.5*(p3)$);
\draw[thick, cbblue, -latex, shorten >= 0.4em, shorten <= 0.4em] (p8) to node[anchor=east] {{\footnotesize $\pi$}} ($0.5*(p2)+0.5*(p3)$);
\draw (p8) node[anchor=north] {{\footnotesize codim-$1$ faces of $\sigma$}};
\end{scope}
\begin{scope}[shift={(5,0)}]
\BsigmaT
\fill[pattern=north west lines, pattern color=gray, opacity=0.8] (p8) -- (p9) -- (p10) -- cycle;
\draw[very thick, cbgreen, fill=white] (p1) circle[radius=2.25pt];
\draw[very thick, cbblue, fill=white] (p2) circle[radius=2.25pt];
\draw[very thick, cbred, fill=white] (p3) circle[radius=2.25pt];
\draw[very thick, cbgreen, fill=cbgreen] (p6b) circle[radius=2.25pt];
\draw[very thick, cbblue, fill=cbblue] (p7) circle[radius=2.25pt];
\draw[ultra thick, cbred, cap=round] (p9) -- (p10);
\draw[very thick, cbred, fill=cbred] (p9) circle[radius=2.25pt];
\draw[very thick, cbred, fill=cbred] (p10) circle[radius=2.25pt];
\draw[thick, cbgreen, -latex, shorten >= 0.4em, shorten <= 0.4em] (p6b) to node[anchor=north east] {{\footnotesize $\pi$}} (p1);
\draw[thick, cbblue, -latex, shorten >= 0.4em, shorten <= 0.4em] (p7) to node[anchor=south] {{\footnotesize $\pi$}} (p2);
\draw[thick, cbred, -latex, shorten >= 0.4em, shorten <= 0.4em] ($0.5*(p9)+0.5*(p10)$) to node[anchor=west] {{\footnotesize $\pi$}} (p3);
\draw (p8) node[anchor=north] {{\footnotesize codim-$2$ faces of $\sigma$}};
\end{scope}
\end{tikzpicture}\\
\vspace{1em}
\begin{tikzpicture}
\newcommand{\BsigmaT}{
    \coordinate (p1) at (0,1);
    \coordinate (p2) at (-1,0);
    \coordinate (p3) at (1,0);
    \coordinate (p4) at ($(p1) + (1.75,0)$);
    \coordinate (p5) at ($(p1) + (0.5,1.5)$);
    \coordinate (p6a) at ($(p1) + (-1.75,0.25)$);
    \coordinate (p6b) at ($(p1) + (-0.75,1.25)$);
    \coordinate (p7) at ($(p2) + (-1,-0.5)$);
    \coordinate (p8) at ($(p2) + (0.75,-1.5)$);
    \coordinate (p9) at ($(p3) + (0,-1)$);
    \coordinate (p10) at ($(p3) + (0.75,-1.25)$);
    \draw[gray, densely dashed, cap=round] (p3) -- (p5); \draw[line width=4pt, white, cap=round] (p1)--(p4); \draw[gray, densely dashed, cap=round] (p1) -- (p4);
    \draw[gray, densely dashed, cap=round] (p1) -- (p5);
    \draw[gray, densely dashed, cap=round] (p1) -- (p6a);
    \draw[gray, densely dashed, cap=round] (p1) -- (p6b);
    \draw[gray, densely dashed, cap=round] (p2) -- (p6a);
    \draw[gray, densely dashed, cap=round] (p2) -- (p7);
    \draw[gray, densely dashed, cap=round] (p2) -- (p8);
    \draw[gray, densely dashed, cap=round] (p3) -- (p8);
    \draw[gray, densely dashed, cap=round] (p3) -- (p4);
    \draw[gray, densely dashed, cap=round] (p3) -- (p9);
    \draw[gray, densely dashed, cap=round] (p3) -- (p10);
    \draw[gray, densely dashed, cap=round] (p4) -- (p10);
    \draw[gray, densely dashed, cap=round] (p8) -- (p9);
    \draw[ultra thick, gray, cap=round] (p1) -- (p2) -- (p3) -- cycle;
    \foreach \pt in {p1,p2,p3}{
        \draw[thick,gray,fill=gray] (\pt) circle[radius=1.75pt];
    }
    \draw ($0.33*(p1)+0.33*(p2)+0.33*(p3)$) node[text=black] {{\footnotesize $\sigma$}};
    \draw[thick, densely dashed, gray, cap=round] (p4) -- (p5) -- (p6b) -- (p6a) -- (p7) -- (p8) -- (p10) -- cycle;
    \draw[thick, densely dashed, gray, cap=round] (p8) -- (p9) -- (p10);
    \draw[thick, densely dashed, gray, cap=round] (p8) -- (p10);
    }
\begin{scope}[shift={(-5,0)}]
\BsigmaT
\fill[pattern=north west lines, pattern color=gray, opacity=0.8] (p8) -- (p9) -- (p10) -- cycle;
\draw (p5) node[text=black, anchor=south east] {{\footnotesize $S_0$}};
\end{scope}
\begin{scope}[shift={(0,0)}]
\BsigmaT
\fill[pattern=north west lines, pattern color=gray, opacity=0.8] (p8) -- (p9) -- (p10) -- cycle;
\draw[ultra thick, black, cap=round] (p4) -- (p5);
\foreach \pt in {p4,p5,p6a,p8}{
    \draw[very thick, black, fill=black] (\pt) circle[radius=2.25pt];
}
\draw (p5) node[text=black, anchor=south east] {{\footnotesize $S_1$}};
\end{scope}
\begin{scope}[shift={(5,0)}]
\BsigmaT
\fill[black,fill opacity=0.25] (p8) -- (p9) -- (p10) -- cycle;
\draw[ultra thick, black, cap=round] (p4) -- (p5) -- (p6b) -- (p6a) -- (p7) -- (p8) -- (p9) -- (p10) -- (p4);
\draw[ultra thick, black, cap=round] (p8) -- (p10);
\foreach \pt in {p4,p5,p6b,p6a,p7,p8,p9,p10}{
    \draw[very thick, black, fill=black] (\pt) circle[radius=2.25pt];
}
\draw (p5) node[text=black, anchor=south] {{\footnotesize $S_2 = \hat{S}(\sigma,T_n)$ }};
\end{scope}
\end{tikzpicture}
\captionsetup{margin=0.75in}
\captionof{figure}{The top row shows $\pi \colon V(\hat S(\sigma, T_n)) \rightarrow \mathcal{F}(\sigma)$ for faces of $\sigma$ in each codimension. The bottom row shows the filtration $S_0 \subset S_1 \subset \cdots \subset \hat{S}(\sigma,T_n)$.}\label{fig:combinatorial-filtration}
\end{figure}
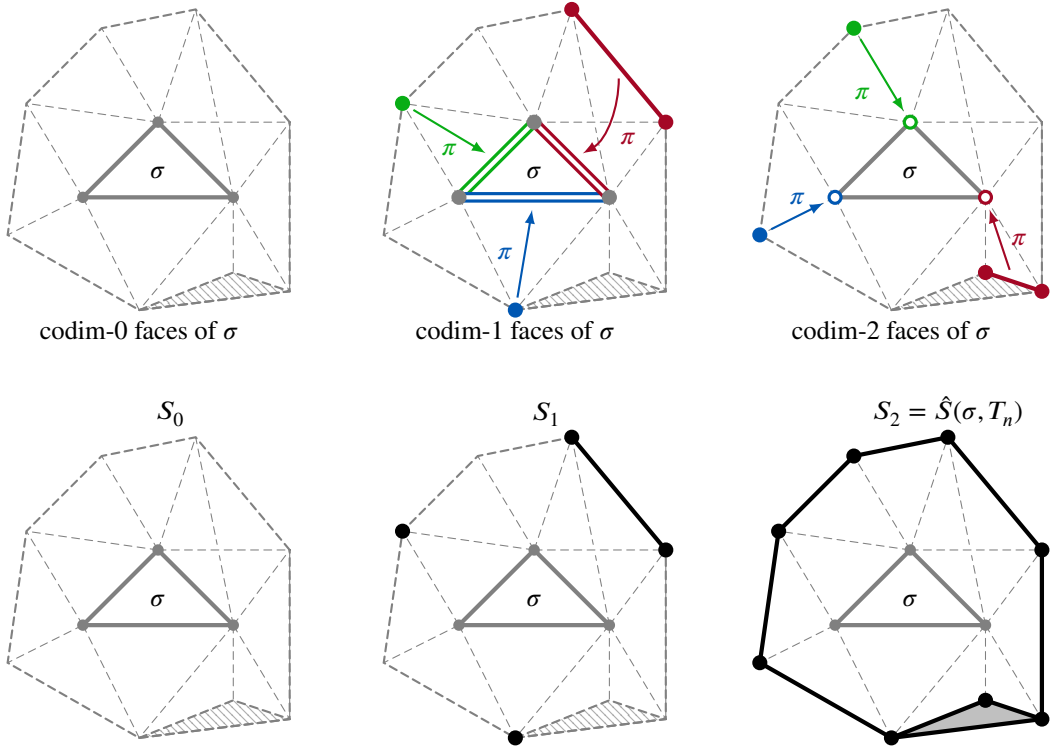

To compute the cohomological dimension of $\hat S(\sigma, T_n)$, we will analyze how the cohomological dimension changes as we work our way up this filtration. To do this, we now analyze how each simplicial complex $S_i$ in the filtration is obtained from the previous simplicial complex $S_{i-1}$. The vertices of the subcomplex  $S_{i}$ are obtained from $S_{i-1}$ by adding in all vertices $v\in \hat S(\sigma, T_n)$ with the property that $\pi(v)$ is a codimension $i$ face of $\sigma$. Take a codimension $i$ face $\tau \subset \sigma$, and consider the set of vertices $L_\tau\coloneqq\pi^{-1}(\tau) \subset S_i$. As $\tau$ ranges over all the codimension $i$ faces, the sets $L_\tau$ partition the vertices in $S_i \setminus S_{i-1}$. This gives us a description of the vertices that are added, as they are given by the disjoint union $\coprod L_\tau$ as $\tau$ ranges over the codimension $i$ faces. Next we need to describe the simplices that are added.
 
\begin{lem}\label{lem:chunks-dont-connect}
    If $\tau$, $\tau^\prime$ are distinct codimension $i$ faces of $\sigma$, then there are no edges joining vertices of $L_\tau$ to vertices of $L_{\tau^\prime}$.
\end{lem}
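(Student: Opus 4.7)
The plan is to establish the contrapositive by contradiction, using the $5$-largeness (flag-no-square property) of $T_n$ provided by \Cref{lem:flag 5-large}. Suppose for contradiction that there exist $v \in L_\tau$ and $v' \in L_{\tau'}$ joined by an edge in $T_n$.

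Since $\tau$ and $\tau'$ are distinct faces of $\sigma$ with the same codimension, neither is contained in the other, so we may choose vertices $u \in \tau \setminus \tau'$ and $u' \in \tau' \setminus \tau$. By the defining property of the map $\pi$, a vertex of $\sigma$ is adjacent to $v$ in $T_n$ if and only if it lies in $\pi(v) = \tau$; in particular, $u$ is adjacent to $v$ but $u$ is \emph{not} adjacent to $v'$. Symmetrically, $u'$ is adjacent to $v'$ but not to $v$. Since $u, u' \in \sigma$, they are joined by an edge in $T_n$.

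Consider the 4-cycle $u, v, v', u'$ in the $1$-skeleton of $T_n$. I first check this cycle is embedded: the vertices $v, v'$ lie in $\hat S(\sigma, T_n)$ and hence outside $\sigma$, while $u, u'$ lie inside $\sigma$, so $\{u, u'\} \cap \{v, v'\} = \emptyset$; also $u \neq u'$ since $u \notin \tau'$, and $v \neq v'$ since $\pi(v) = \tau \neq \tau' = \pi(v')$. The four edges of this cycle are $uv$, $vv'$, $v'u'$, and $u'u$, and by construction its two diagonals $\{u, v'\}$ and $\{v, u'\}$ are \emph{not} edges of $T_n$. This contradicts $5$-largeness, which forces at least one diagonal of every embedded $4$-cycle to be present.

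The entire argument is this single combinatorial observation, so there is no substantial obstacle; the only subtlety is verifying that the four chosen vertices really are pairwise distinct (so that one obtains an embedded $4$-cycle rather than a degenerate configuration), which follows immediately from the disjointness of $\sigma$ and $\hat S(\sigma, T_n)$ together with $\tau \neq \tau'$.
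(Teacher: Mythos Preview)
Your proof is correct and follows essentially the same route as the paper's: both select a vertex in $\tau\setminus\tau'$ and one in $\tau'\setminus\tau$, form the resulting embedded $4$-cycle with $v$ and $v'$, and use $5$-largeness to force one of the forbidden diagonals. Your explicit verification that the four vertices are pairwise distinct is a nice touch that the paper leaves implicit.
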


\begin{proof}
Consider a pair of vertices $v\in L_\tau$ and $v^\prime \in L_{\tau^\prime}$, where $\tau, \tau^\prime$ are distinct codimension $i$ faces. Since $\tau \neq \tau^\prime$ have the same codimension in $\sigma$, there is a vertex $w\in \tau\setminus \tau^\prime$, and a vertex $w^\prime\in \tau^\prime\setminus \tau$. The vertices $v\in L_\tau$ and $w\in \tau$ are connected by an edge by the defining property of $L_\tau$. Similarly $v^\prime$ and $w^\prime$ are connected by an edge. Then $w,w^\prime$ are both vertices in the simplex $\sigma$, so they are connected by an edge. For a contradiction, assume that $v, v^\prime$ are connected by an edge. Then the ordered set of vertices $v,w,w^\prime,v^\prime$ defines a $4$-cycle in $T_n$, which we know is $5$-large, hence the $4$-cycle must contain a diagonal. If $v$ is connected by an edge to $w^\prime$, we obtain a contradiction to the fact that $\pi(v)=\tau$ and $w^\prime\notin \tau$. A similar contradiction occurs if $v^\prime$ is connected by an edge to $w$. We conclude that $v$, $v^\prime$ cannot be connected by an edge, establishing the lemma.
\end{proof}

As a consequence of \Cref{lem:chunks-dont-connect}, any of the new simplices that are added to $S_{i-1}$ either span vertices in a single $L_\tau$ or connect some vertices in $L_\tau$ to vertices in $S_{i-1}$. Denote the collection of vertices in $\lk(\tau, T_n)\cap S_{i-1}$ by $\partial L_\tau$.
 
\begin{lem}\label{lem:chunks-come-from-links}
    The subcomplex of $S_i$ spanned by the vertices $L_\tau \cup \partial L_\tau$ is naturally isomorphic to the simplicial complex $\lk(\tau, T_n) \setminus \hat \sigma$, where $\hat \sigma \subset \lk(\tau, T_n)$ is the simplex in $\lk(\tau, T_n)$ corresponding to $\sigma$. Under this isomorphism, the set of vertices $\partial L_\tau$ corresponds to the vertices in $\lk(\tau, T_n)$ at distance one from $\hat \sigma$. Moreover, this subcomplex contains all the simplices in $S_i$ that contain a vertex of $L_\tau$.
\end{lem}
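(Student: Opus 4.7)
The plan is to verify the three claims of the lemma in sequence, with each step reducing largely to bookkeeping on vertex sets followed by one invocation of flagness or 5-largeness of $T_n$ (both provided by \Cref{lem:flag 5-large}).

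First I would identify the vertex sets. A vertex of $\lk(\tau, T_n) \setminus \hat\sigma$ is a vertex of $T_n$ adjacent to every vertex of $\tau$ but not lying in $V(\hat\sigma) = V(\sigma) \setminus V(\tau)$. Since $\tau$ is nonempty, any such vertex lies at combinatorial distance one from $\sigma$ and hence belongs to $\hat S(\sigma, T_n)$. Partitioning by $\pi$, those with $\pi(v) = \tau$ are precisely $L_\tau$, while those with $\pi(v) \supsetneq \tau$ have $\pi(v)$ of codimension strictly less than $i$ and so lie in $\lk(\tau, T_n) \cap S_{i-1} = \partial L_\tau$. This gives the bijection of vertex sets, and the identification of $\partial L_\tau$ with vertices at combinatorial distance one from $\hat\sigma$ is then immediate, since $\pi(v) \supsetneq \tau$ is precisely the condition of being adjacent to some vertex of $\hat\sigma$ while still adjacent to all of $\tau$.

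Next I would promote the vertex-set bijection to a simplicial isomorphism. Both sides are full subcomplexes of $T_n$: the left is full in $S_i$, which in turn is full in $\hat S(\sigma, T_n)$ and hence in $T_n$, while the right is full in $\lk(\tau, T_n)$, which is full in $T_n$. So I only need to match spanning conditions. A set of vertices in $V(\lk(\tau, T_n)) \setminus V(\hat\sigma)$ spans a simplex in $\lk(\tau, T_n)$ iff together with all of $\tau$ they span a simplex of $T_n$; since each such vertex is already adjacent to every vertex of $\tau$, flagness of $T_n$ reduces this condition to pairwise adjacency among the listed vertices, which is exactly the condition for spanning a simplex in $T_n$. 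So the two subcomplexes agree.

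The main obstacle is the final claim, that a simplex $\Delta \subset S_i$ sharing a vertex $v$ with $L_\tau$ has all its vertices in $L_\tau \cup \partial L_\tau$. Given another vertex $w$ of $\Delta$, \Cref{lem:chunks-dont-connect} handles the case $w \in L_{\tau'}$ for some codimension $i$ face $\tau'$, forcing $\tau' = \tau$. Otherwise $w \in S_{i-1}$, and I must show $\tau \subseteq \pi(w)$. Suppose not, and pick $u \in V(\tau) \setminus \pi(w)$. A short counting argument shows that since the codimension of $\pi(w)$ in $\sigma$ is strictly less than $i$, we have $|\pi(w)| > |V(\tau)|$, so $\pi(w) \not\subseteq V(\tau)$, allowing a choice of $u' \in \pi(w) \setminus V(\tau)$. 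Then $u$--$v$, $v$--$w$, $w$--$u'$, and $u'$--$u$ are edges in $T_n$ (using $v \in L_\tau$, $v,w \in \Delta$, $u' \in \pi(w)$, and $u,u' \in V(\sigma)$), while the diagonals $u$--$w$ and $v$--$u'$ are not edges (since $u \notin \pi(w)$ and $u' \notin \pi(v) = \tau$). This produces an embedded $4$-cycle in $T_n$ with no diagonal, contradicting 5-largeness. Hence $\tau \subseteq \pi(w)$, so $w \in \partial L_\tau$, completing the proof.
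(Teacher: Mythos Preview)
Your proof is correct and follows essentially the same approach as the paper: both identify the vertex sets via the map $\pi$, invoke flagness (equivalently, the full-subcomplex property) for the simplicial isomorphism, and establish the final claim with the same $5$-largeness argument on a $4$-cycle built from $v$, $w$, a vertex of $\pi(w)\setminus\tau$, and a vertex of $\tau$. The only cosmetic difference is that you phrase the $4$-cycle argument by contradiction (exhibiting a square with neither diagonal), whereas the paper argues directly (one diagonal is absent, so the other must be present).
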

 
\begin{proof}
By construction, the vertices in $L_\tau$ can be identified with vertices in $\lk(\tau, T_n)$ by viewing the link as a subcomplex in the flag complex $T_n$. We claim that the vertices of ${\lk(\tau, T_n) \setminus \{L_\tau \cup \hat \sigma\}}$ are precisely the vertices of $\lk(\tau, T_n)$ at combinatorial distance one from $\hat \sigma$. Indeed, if $v$ is at combinatorial distance one from $\hat \sigma$, this means there is some vertex $w\in \hat \sigma$ connected by an edge to $v$. In terms of the embedding of $\lk(\tau, T_n)$ into $T_n$, the simplex $\hat \sigma \subset \lk(\tau, T_n)$ corresponds to the face of $\sigma$ opposite $\tau$, so $w\in \sigma \setminus \tau$. Since $v\in \lk(\tau, T_n)$ is connected by an edge to $w$, we have that $\tau \cup \{w\} \subseteq \pi(v)$. This forces the codimension of $\pi(v)$ to be at most $i-1$, which means that $v\in S_{i-1}$. This argument can be reversed to show that the vertices in $L_\tau$ are precisely the vertices in $\lk(\tau, T_n)$ whose combinatorial distance to $\hat \sigma$ is at least two. Of course, the vertices of $\hat \sigma$ in $\lk(\tau, T_n)$ are precisely those in the face of $\sigma$ opposite $\tau$, so they do not lie in $S_{i-1}$. We conclude that the set $\partial L_\tau$ consists of precisely the vertices in $\lk(\tau, T_n)$ that are exactly at distance $1$ from $\hat \sigma$.

Thus all the vertices in $L_\tau \cup \partial L_\tau$ lie in $\lk(\tau, T_n)$, and correspond to those vertices at combinatorial distance at least one from $\hat \sigma \subset \lk(\tau, T_n)$. Since $T_n$ is flag, the subcomplex they span is naturally isomorphic to the subcomplex of $\lk(\tau, T_n)$ spanned by all its vertices that are not in $\hat \sigma$.

To check the last statement, we note that \Cref{lem:chunks-dont-connect} implies that if $w\in S_i$ is a vertex connected to some vertex $v\in L_\tau$, then $w\in L_\tau \cup S_{i-1}$. It then suffices to check that any such $w\in S_{i-1}$ in fact lies in $\partial L_\tau = S_{i-1} \cap \lk(\tau, T_n)$. In other words, it is sufficient to check that such a $w$ lies in $\lk(\tau, T_n)$. Since $w\in S_{i-1}$, we know that $\pi(w)$ is a face of codimension $\leq i-1$. Since $\tau$ has codimension $i$, there exists a vertex $w^\prime\in \sigma\setminus \tau$ connected to $w$. We now claim that $\tau \subset \pi(w)$, and hence that $w \in \lk(\tau, T_n)$. Indeed, let $v^\prime \in \tau$ be an arbitrary vertex, and observe that the ordered $4$-tuple of vertices $v, w, w^\prime, v^\prime$ forms a $4$-cycle. Since $v$ is not connected to $w^\prime$, since $w^\prime\notin \tau=\pi(v)$, $5$-largeness of $T_n$ forces the $w$ to be connected to $v^\prime$. We conclude that any vertex $w\in S_{i-1}$ connected to a vertex of $L_\tau$ is also in $\lk(\tau, T_n)$, and hence lies in the subset $\partial L_\tau = S_{i-1} \cap \lk(\tau, T_n)$. This checks the last statement, and completes the proof of the lemma.
\end{proof}
 
Finally, we will require the following technical lemma.

\begin{lem}\label{lem:cdim-of-gluings}
    Consider a pair of simplicial complexes $Z, Z^\prime$ that can be expressed as gluings $Z = A \cup_C B$ and $Z^\prime = A \cup _C B^\prime$, where each inclusion of $C$ into $A$ induces the same maps on reduced cohomology. Assume $B$ is contractible, $\cdim(Z)\leq n-1$, and $\cdim(B^\prime)\leq n-1$. Then $\cdim (Z^\prime) \leq n-1$.
\end{lem}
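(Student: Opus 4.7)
The plan is to apply the Mayer--Vietoris long exact sequence to both pushouts. From $Z = A \cup_C B$, contractibility of $B$ combined with $\cdim(Z) \leq n-1$ should pin down the restriction map $i_A^\ast : H^\bullet(A) \to H^\bullet(C)$ in the relevant range of degrees. The hypothesis that the inclusion $C \hookrightarrow A$ induces the same map on cohomology for both gluings is then exactly what allows us to transfer this information into the MV sequence for $Z'$ and combine it with $\cdim(B') \leq n-1$ to conclude $H^i(Z') = 0$ for all $i \geq n$.

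Concretely, since $B$ is contractible one has $H^i(B) = 0$ for $i \geq 1$, and the MV sequence for $Z$ reduces in positive degrees to the long exact sequence of the mapping cone of $i_A$:
$$\cdots \to H^{i-1}(A) \to H^{i-1}(C) \to H^i(Z) \to H^i(A) \to H^i(C) \to \cdots$$
With $H^i(Z) = 0$ for every $i \geq n$, exactness forces $i_A^\ast : H^i(A) \to H^i(C)$ to be an isomorphism in degrees $i \geq n$ and surjective in degree $n-1$.

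Now write out the MV sequence for $Z'$:
$$\cdots \to H^{i-1}(A) \oplus H^{i-1}(B') \xrightarrow{\alpha'} H^{i-1}(C) \to H^i(Z') \to H^i(A) \oplus H^i(B') \xrightarrow{\beta'} H^i(C) \to \cdots$$
For $i \geq n+1$, the hypothesis $\cdim(B') \leq n-1$ gives $H^{i-1}(B') = H^i(B') = 0$, so both $\alpha'$ and $\beta'$ reduce to restriction maps that are isomorphisms by the previous paragraph, and hence $H^i(Z') = 0$. The main obstacle is the boundary case $i = n$, where $H^{n-1}(B')$ need not vanish: there we use that $H^n(B') = 0$ makes $\beta'$ equal to the restriction $H^n(A) \to H^n(C)$, which is injective by the previous paragraph, while surjectivity of $H^{n-1}(A) \to H^{n-1}(C)$ forces $\alpha' : H^{n-1}(A) \oplus H^{n-1}(B') \to H^{n-1}(C)$ to be surjective (already on its first summand). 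Exactness then yields $H^n(Z') = 0$, completing the proof that $\cdim(Z') \leq n-1$.
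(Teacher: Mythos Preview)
Your proof is correct and follows essentially the same approach as the paper: both apply Mayer--Vietoris to $Z=A\cup_C B$ (using contractibility of $B$) to deduce that $H^i(A)\to H^i(C)$ is an isomorphism for $i\ge n$ and a surjection for $i=n-1$, then feed this into the Mayer--Vietoris sequence for $Z'=A\cup_C B'$ together with $\cdim(B')\le n-1$ to force $H^i(Z')=0$ for $i\ge n$. Your treatment of the boundary case $i=n$ is slightly more explicit than the paper's, but the argument is the same.
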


\begin{proof}
We first consider the Mayer--Vietoris sequence associated with the gluing $Z=A\cup_C B$. Since $B$ is contractible by hypothesis, the sequence reduces to
    \[ \cdots \longrightarrow \overline{H}{}^i(Z) \longrightarrow \overline{H}{}^i(A) \longrightarrow \overline{H}{}^i(C)\longrightarrow \overline{H}{}^{i+1}(Z) \longrightarrow \cdots \]
We also know by hypothesis that $\cdim(Z)\leq n-1$, so $\overline{H}{}^i(Z)=0$ for $i\geq n$. The long exact sequence now tells us that the map $\overline{H}{}^i(A) \rightarrow \overline{H}{}^i(C)$ induced by the inclusion $C\hookrightarrow A$ is an isomorphism when $i\geq n$ and a surjection when $i=n-1$.

Now we turn our attention to $Z^\prime= A\cup_C B^\prime$, obtaining the Mayer--Vietoris sequence:
    \[ \cdots \longrightarrow \overline{H}{}^{i-1}(C) \longrightarrow \overline{H}{}^i(Z^\prime) \longrightarrow \overline{H}{}^i(A)\oplus \overline{H}{}^i(B^\prime) \longrightarrow \overline{H}{}^i(C)\longrightarrow \cdots \]
By hypothesis, we have that $\cdim(B^\prime) \leq n-1$, so $\overline{H}{}^i(B^\prime)=0$ for $i\geq n$. In particular, when $i\geq n$ the sequence above reduces to:
    \[ \cdots \longrightarrow \overline{H}{}^{i-1}(C) \longrightarrow \overline{H}{}^i(Z^\prime) \longrightarrow \overline{H}{}^i(A) \longrightarrow \overline{H}{}^i(C)\longrightarrow \cdots \]
From our earlier analysis, we also know that the map $\overline{H}{}^i(A) \rightarrow \overline{H}{}^i(C)$ is an isomorphism when $i\geq n$, and a surjection when $i=n-1$. This forces $\overline{H}{}^i(Z^\prime)=0$ in the range $i\geq n$, which proves the lemma.
\end{proof}

With our preliminaries in place, we can now establish \Cref{prop:cd-spherical-nbhds}, and hence complete the proof of the Main Theorem.

\begin{proof}[Proof of \Cref{prop:cd-spherical-nbhds}]
We want to show that $\cd(S(\sigma, T_n))\leq n-1$ for every nonempty simplex $\sigma \subset T_n$. From \Cref{lem:metric-to-combinatorial-sphere}, we have a homotopy equivalence $S(\sigma, T_n) \simeq \hat S(\sigma, T_n)$, so we instead show that $\cd(\hat S(\sigma, T_n))\leq n-1$. As discussed above, the set $\hat S(\sigma, T_n)$ has a filtration
    \[ S_0 \subset S_1 \subset \cdots \subset S_k = \hat S(\sigma, T_n). \]
We will consider the cohomological dimension of each stage in this filtration, and check that it always remains less than or equal to $n-1$.

At the bottom of the filtration we have the space $S_0$. This is the full subcomplex of $\hat S(\sigma, T_n)$ containing all vertices $v$ that satisfy $\pi(v)=\sigma$. This means that the vertex $v$ is adjacent to every vertex of $\sigma$. Since $T_n$ is a flag complex, we see that the join $v*\sigma$ is a $(k+1)$-simplex containing $\sigma$, and hence $v$ can be identified with a vertex in $\lk(\sigma, T_n)$. It follows that the subcomplex $S_0$ is precisely the copy of $\lk(\sigma, T_n)$ inside the complex $\hat S(\sigma, T_n)$. Applying \Cref{lem:cdim-of-links} we see that $\cd(S_0)= \cd(\lk(\sigma, T_n))\leq n-1$. This establishes the base case of the induction. Next we consider how the cohomological dimension changes when we go from some $S_{i-1}$ to $S_{i}$. 

By induction, we may now assume that $\cdim(S_{i-1})\leq n-1$. From \Cref{lem:chunks-dont-connect} and \Cref{lem:chunks-come-from-links}, we see that $S_{i}$ is obtained by gluing on a finite collection of subcomplexes to $S_{i-1}$. This collection is parametrized by the $\binom{k+1}{i}$ codimension $i$ faces of $\sigma$. Let us now analyze how the topology changes each time we add one of these subcomplexes. We let $A_1, A_2, \ldots $ denote the collection of spaces we are adding on to $S_{i-1}$ and $B_j^\prime$ denote the result of adding $A_1, \ldots , A_j$ to $S_{i-1}$. Note that once we have reached $j={\binom{k+1}{i}}$, we have added on the subsets corresponding to each codimension $i$ face, hence $B_j^\prime=S_{i+1}$. Thus it will be sufficient to argue that each $\cdim(B_j^\prime) \leq n-1$, which we now do by a second inductive argument.

We will use \Cref{lem:cdim-of-gluings} to see that the cohomological dimension cannot increase as we go from $B_j^\prime$ to $B_{j+1}^\prime$. In the notation of \Cref{lem:cdim-of-gluings}, we are setting:
    \begin{itemize}
        \item $Z\coloneqq\lk(\tau, T_n)$ where $\tau$ is one of the codimension $i$ faces of $\sigma$, so $\cdim(Z)\leq n-1$ by \Cref{lem:cdim-of-links};
        \item $A\coloneqq A_{j+1}$ is the subcomplex of $Z$ spanned by all vertices that are not in the subcomplex $\hat \sigma$ from \Cref{lem:chunks-come-from-links} applied to $\tau$;
        \item $B$ is the subcomplex of $Z$ consisting of all simplices incident to $\hat \sigma$ (from \Cref{lem:chunks-come-from-links}), which coincides with $\hat B(\hat \sigma, Z)$, and is thus contractible by \Cref{lem:metric-to-combinatorial-sphere}, since $T_n$, hence $Z$, is flag and $5$-large;
        \item $B^\prime\coloneqq B_j^\prime$ is the union of $S_{i-1}$ with finitely many of the subcomplexes $A_1, \ldots , A_j$ already added on, so by induction we can assume that $\cdim(B^\prime) \leq n-1$;
        \item $C \coloneqq\partial L_\tau \subseteq S_{i-1}$ from the conclusions about $\partial L_\tau$ in \Cref{lem:chunks-come-from-links}, hence $C$ is contained in $B$ and $B^\prime$.
    \end{itemize}
Thus all the hypotheses of \Cref{lem:cdim-of-gluings} are satisfied, which allows us to conclude that the space $Z^\prime=A\cup_C B^\prime$ satisfies $\cdim(Z^\prime)\leq n-1$. However, observe that $Z^\prime$ is exactly the result of adding on $A=A_{j+1}$ to $B^\prime=B_j^\prime$, hence it is the space $B_{j+1}^\prime$, thus completing the inductive step. After performing this step for the finitely many codimension $i$ faces, we see that $\cdim(S_i)\leq n-1$. This completes the inductive step, and continuing this procedure up the filtration completes the proof that $\cd(S(\sigma, T_n))=\cd(\hat S(\sigma, T_n)) \leq n-1$.
\end{proof}

\subsection*{Declarations of Competing Interests} The authors have no competing interests to declare that are relevant to the content of this article.  

\subsection*{Data Availability Statement} Data sharing is not applicable to this article as no datasets were generated or analyzed during the current study.

\bibliographystyle{plain}
\bibliography{references.bib}

\end{document}